\newcommand\myuline{\bgroup\markoverwith
	{\textcolor{gray}{\rule[0.4ex]{2pt}{3pt}}}\ULon}
\newcommand{\bea}{\begin{eqnarray}}
\newcommand{\eea}{\end{eqnarray}}
\def\beaa{\begin{eqnarray*}}
\def\eeaa{\end{eqnarray*}}
\def\ba{\begin{array}}
\def\ea{\end{array}}
\def\be#1{\begin{equation} \label{#1}}
\def\eeq{\end{equation}}
\def\lV{\lVert}
\def\rV{\rVert}
\def\div{\mathrm{div}}
\def\d{\partial}
\newcommand{\Lf}{L^{\infty}}
\def\be{{\beta}}
\def\ep{\epsilon}
\def\ka{\kappa}
\def\Om{\Omega}
\def\ka{\kappa}
\def\nab{\nabla}
\def\FF{{\mathcal F}}
\def\R{{\mathbb{R}}}
\def\C{{\mathbb{C}}}
\def\N{{\mathbb N}}
\def\Z{{\mathbb{Z}}}
\def\P{{\mathbb P}}
\def\U{{\mathbb U}}
\newcommand{\<}{  \langle   }
\renewcommand{\>}{  \rangle   }
\newtheorem{theorem}{Theorem}[section]
\newtheorem{lemma}[theorem]{Lemma}
\newtheorem{proposition}[theorem]{Proposition}
\newtheorem{corollary}[theorem]{Corollary}
\newtheorem{remark}[theorem]{Remark}
\numberwithin{equation}{section}
\numberwithin{equation}{section}
\begin{document}


\title[Global regulairity for hyperbolic liquid crystal]
{Small data global regularity for 3-D Ericksen-Leslie's hyperbolic liquid crystal model without kinematic transport}
\author[J. Huang]{Jiaxi Huang$^1$}
\email{\href{mailto:huangjiaxi@bicmr.pku.edu.cn}{huangjiaxi@bicmr.pku.edu.cn}}

\author[N. Jiang]{Ning Jiang$^{2}$}
\email{\href{mailto:njiang@whu.edu.cn}{njiang@whu.edu.cn}}

\author[Y. Luo]{Yi-Long Luo$^3$}
\email{\href{mailto:yl-luo@whu.edu.cn}{yl-luo@whu.edu.cn}}

\author[L. Zhao]{Lifeng Zhao$^4$}
\email{\href{mailto:zhaolf@ustc.edu.cn}{zhaolf@ustc.edu.cn}}

\address{$^1$ Beijing International Center for Mathematical Research, Peking University, Beijing 100871, P. R. China}

\address{$^2$School of Mathemtical and Statistics, Wuhan University, Wuhan 430072, P. R. China}

\address{$^3$School of Mathematics, South China University of Technology, Guangzhou, 510641, P. R. China}

\address{$^4$School of Mathemtical Sciences, University of Science and Technology of China, Hefei 230026, P. R. China}

\subjclass[2010]{}

\keywords{Global regularity, hyperbolic, liquid crystal}

\begin{abstract}
In this article, we consider the Ericksen-Leslie's hyperbolic system for incompressible liquid crystal model without kinematic transport in three spatial dimensions, which is a nonlinear coupling of incompressible Navier-Stokes equations with wave map to $\mathbb{S}^2$.  Global regularity for small and smooth initial data near the equilibrium is proved.  The proof relies on the idea of space-time resonance. 
\end{abstract}
\maketitle

\setcounter{tocdepth}{2}

\section{Introduction}
The hydrodynamic theory of incompressible liquid crystals was established by Ericksen \cite{Ericksen-1961-TSR, Ericksen-1987-RM, Ericksen-1990-ARMA} and Leslie \cite{Leslie-1968-ARMA, Leslie-1979} in the 1960's (see also Section 5.1 of \cite{Lin-Liu-2001} ). The general Ericksen-Leslie's system consists of the following equations of the velocity field $u(x,t)\in \mathbb{R}^3$ and the orientation field $d(x,t)\in \mathbb{S}^2$, and $(x,t)\in \R^3\times \R^+$:
\begin{equation}\label{PHLC}
\begin{aligned}
\left\{ \begin{array}{c}
\partial_t u + u \cdot \nabla u  + \nabla p = - \div (\nabla d \odot \nabla d) + \div \tilde{\sigma}\,, \\
\div u = 0\, ,\\
\rho_1 \ddot{d} = \Delta d + \Gamma d + \lambda_1 (\dot{d} + B d) + \lambda_2 A d\,.
\end{array}\right.
\end{aligned}
\end{equation}
For the detailed derivation of \eqref{PHLC} from the original form of Ericksen-Leslie's formulation, see, for example \cite{Jiang-Luo-2018}.

In the above system, $\rho_1 > 0$ is the inertial constant, and the superposed dot denotes the material derivative $\partial_t + u \cdot\nabla$, and
\begin{equation*}
\begin{aligned}
A = \tfrac{1}{2}(\nabla u + \nabla^\top u)\,,\quad B= \tfrac{1}{2}(\nabla u - \nabla^\top u)\,,\end{aligned}
\end{equation*}
represent the rate of strain tensor and skew-symmetric part of the strain rate, respectively. We also define $N = \dot d + B d$ as the rigid rotation part of director changing rate by fluid vorticity. Here $A_{ij} = \tfrac{1}{2} (\partial_j u_i + \partial_i u_j)$, $ B_{ij} = \tfrac{1}{2} (\partial_j u_i - \partial_i u_j) $, $(B d)_i =B_{ki} d_k$, and $(\nabla d \odot \nabla d)_{ij} = \partial_i d_k \partial_j d_k$. The stress tensor $\tilde\sigma$ has the following form:
\begin{equation}\label{Extra-Sress-sigma}
\begin{aligned}
\tilde\sigma_{ji}=  \nu_1 d_k A_{kp}d_p  d_i d_j + \nu_2  d_j N_i  + \nu_3 d_i N_j  + \nu_4 A_{ij} + \nu_5 A_{ik}d_k d_j   + \nu_6 d_i A_{jk}d_k \,.
\end{aligned}
\end{equation}
These coefficients $\nu_i (1 \leq i \leq 6)$ which may depend on material and temperature, are usually called Leslie coefficients, and are related to certain local correlations in the fluid. Usually, the following relations are frequently introduced in the literatures \cite{Ericksen-1961-TSR,Leslie-1968-ARMA, Wu-Xu-Liu-ARMA2013}.
\begin{equation}\label{Coefficients-Relations}
\lambda_1=\nu_2-\nu_3\,, \quad\lambda_2 = \nu_5-\nu_6\,,\quad \nu_2+\nu_3 = \nu_6-\nu_5\,.
\end{equation}
The first two relations are necessary conditions in order to satisfy the equation of motion identically, while the third relation is called {\em Parodi's relation}, which is derived from Onsager reciprocal relations expressing the equality of certain relations between flows and forces in thermodynamic systems out of equilibrium. Under Parodi's relation, we see that the dynamics of an incompressible nematic liquid crystal flow involve five independent Leslie coefficients in \eqref{Extra-Sress-sigma}. Furthermore, in \eqref{PHLC}, the Lagrangian multiplier $\Gamma$ is (which ensures the geometric constraint $|d|=1$):
\begin{equation}\label{Lagrange-Multiplier}
\Gamma = - \rho_1 |\dot{d}|^2 + |\nabla d|^2 - \lambda_2 d^\top A d\, .
\end{equation}
We remark that the last two terms in the third equation of \eqref{PHLC} is the so-called kinematic transport, i.e.
\begin{equation}\label{kinematic transport}
  g= \lambda_1 (\dot{d} + B d) + \lambda_2 A d\,,
\end{equation}
which represents the effect of the macroscopic flow field on the microscopic structure. The material coefficients $\lambda_1$ and $\lambda_2$ reflect the molecular shape and the slippery part between the fluid and particles. The first term represents the rigid rotation of the molecule , while the second term stands for the streching of the molecule by the flow. 

Recently, the second and third named authors of the current paper studied in \cite{Jiang-Luo-2018} the well-posedness in the context of classical solutions for the hyperbolic case, i.e. $\rho_1 >0$. More precisely, in \cite{Jiang-Luo-2018} under some natural constraints on the Leslie coefficients which ensure the basic energy law is dissipative, they proved the local-in-time existence and uniqueness of the classical solution to the system \eqref{PHLC} with finite initial energy. Furthermore, with an additional assumption on the coefficients which provides a damping effect, i.e. $\lambda_1 < 0$, and the smallness of the initial energy, the unique global classical solution was established. Here we remark that the assumption $\lambda_1 < 0$ plays a crucial role in the global-in-time well-posedness.

 Cai-Wang \cite{CW} recently made progress for the simplied Ericksen-Leslie system, namely, the case with  $\nu_i = 0, i=1\,,\cdots\,,6$, $i\neq 4$. They proved the global regularity of \eqref{PHLC} near the constant equilibrium by employing the vector field method.

In the current paper, we consider the more general case: still $\nu_2=\nu_3=0$ and $\nu_4>0$, but $\nu_5=\nu_6 >-\nu_4$, and $\nu_1 > -2(\nu_4+\nu_5)$. Of course, from \eqref{Coefficients-Relations}, we still have $\lambda_1=\lambda_2=0$, i.e. the kinematic transport vanishes. The aim of this paper is to prove the global regularity of this more general case (but still special, comparing to the most general case, say, \eqref{Extra-Sress-sigma} and \eqref{Coefficients-Relations}) near the constant equilibrium $(u,d)= (\vec{0},\vec{i})$. More precisely, we study the Ericksen-Leslie's hyperbolic liquid crystal model in the following form: 
\begin{equation}        \label{ori_sys}
\left\{
\begin{aligned}
\partial_t u+u\cdot\nabla u+\nabla p & =\tfrac{\nu_4}{2}\Delta u-\div(\nabla d \odot \nabla d)+\div \sigma, \\
\div u & =0,\\
\ddot{d}-\Delta d & =(-|\dot{d}|^2+|\nabla d|^2)d.
\end{aligned}
\right.
\end{equation}
on $\R^3\times \R^+$ with the constraint $|d|=1$, where
\begin{equation*}
\sigma_{ji}=\nu_1 d_kd_pA_{kp}d_id_j+ \nu_5(d_j d_k A_{ki}+d_i d_k A_{kj})\,.
\end{equation*}

\subsection{Historical remarks}
Important special case of \eqref{PHLC} $\rho_1 = 0, \lambda_1=-1$ is the so-called parabolic Ericksen-Leslie system, which has been extensively studied from the mid 80's.The static analogue of the parabolic Ericksen-Leslie's system is the so-called Oseen-Frank model, whose mathematical study was initialed from Hardt-Kinderlehrer-Lin \cite{Hardt-Kinderlehrer-Lin-CMP1986}. Since then there have been many works in this direction. In particular, the existence and regularity or partial regularity of the approximation (usually Ginzburg-Landau approximation as in \cite{Lin-Liu-CPAM1995}) dynamical Ericksen-Leslie's system was started by the work of Lin and Liu in \cite{Lin-Liu-CPAM1995}, \cite{Lin-Liu-DCDS1996} and \cite{Lin-Liu-ARMA2000}. The simplest system preserving the basic energy law which can be obtained by neglecting the Leslie stress and by specifying some elastic constants. In 2-D case, the existence of global weak solutions with at most a finite number of singular times were proved by Lin-Lin-Wang \cite{Lin-Lin-Wang-ARMA2010}. Recently, Lin and Wang proved global existence of weak solution for 3-D case with the initial director field lying in the hemisphere in \cite{Lin-Wang-CPAM2016}.

For the more general parabolic Ericksen-Leslie's system,  local well-posedness is proved by Wang-Zhang-Zhang in \cite{Wang-Zhang-Zhang-ARMA2013}, and  existence of global solutions and regularity in $\mathbb{R}^2$ was established by Huang-Lin-Wang in \cite{Huang-Lin-Wang-CMP2014}. For more complete review of the works for the parabolic Ericksen-Leslie's system, please see the references therein above.

If $\rho_1>0$,  \eqref{PHLC} is an incompressible Navier-Stokes equations coupled with a wave map type system for which there is very few works, comparing the corresponding parabolic model, which is Navier-Stokes coupled with a heat flow. The only notable exception might be for the most simplified model, say, in \eqref{PHLC}, taking $\tilde{\sigma}=0$ and $u=0$, and the spatial dimension is $1$. For this case, the system \eqref{PHLC} can be reduced to a so-called nonlinear variational wave equation. Zhang and Zheng  studied systematically the dissipative and energy conservative solutions \cite{Zhang-Zheng-CPDE2001,Zhang-Zheng-ARMA2010}.

For the multidimensional case, there has been some progress on the hyperbolic system of liquid crystal. Very recently, De Anna and Zarnescu \cite{DeAnna-Zarnescu-2016} considered the inertial Qian-Sheng model of liquid crystals. They derived the energy law and proved the local well-posdedness for bounded initial data and global well-posedness under the assumptions that the initial data is small in suitable norm and the coefficients satisfy some further damping property. Furthermore, for the inviscid version of the Qian-Sheng model, in \cite{FRSZ-2016}, Feireisl-Rocca-Schimperna-Zarnescu proved the global existence of the {\em dissipative solution} which is inspired from that of incompressible Euler equation defined by P-L. Lions \cite{Lions-1996}.

\subsection{The main theorem}
To state our main theorem, we need some notations. Define the perturbed angular momentum operators by
\begin{equation*}
\tilde{\Om}_i u=\Om_i u+A_i u,\ \tilde{\Om}_i d=\Om_i d,
\end{equation*}
where $\Om=(\Om_1,\Om_2,\Om_3)$ is the rotation vector-field $\Om=x\wedge\nab$ and $A_i$ is defined by
\begin{equation*}
A_1=\left(\begin{array}{ccc}
0&0&0\\
0&0&1\\
0&-1&0
\end{array}\right),\ \ 
A_2=\left(\begin{array}{ccc}
0&0&-1\\
0&0&0\\
1&0&0
\end{array}\right),\ \ 
A_3=\left(\begin{array}{ccc}
0&1&0\\
-1&0&0\\
0&0&0
\end{array}\right).
\end{equation*}
We define the scaling vector-field $S$ by
\begin{equation*}
S=t\d_t+x_i\d_{x_i}.
\end{equation*}

Let
\begin{equation*}
\Gamma\in \{\d_t,\d_1,\d_2,\d_3,\tilde{\Om}_1,\tilde{\Om}_2,\tilde{\Om}_3\}
\end{equation*}
and $Z^a=S^{a_1}\Gamma^{a'}$, where $a=(a_1,a'):=(a_1,a_2,\cdots,a_8)\in\Z_+^8$, $\Gamma^{a'}=\Gamma^{a_2}\Gamma^{a_3}\cdots\Gamma^{a_8}$, we define
\begin{equation*}
u^{(a)}:=Z^a u,\ \ d^{(a)}:=Z^a d.
\end{equation*}

The main result of this paper is as follows:
\begin{theorem}           \label{Ori_thm}
	Assume that $N_0:=60,N_1:=6,h:=6$, the fixed coefficients $\nu_1,\nu_4$ and $\nu_5$ satisfy
	\begin{equation}       \label{mu1mu2}
	\nu_4>0,\ \nu_1>-2(\nu_4+\nu_5),\ \nu_5>-\nu_4,
	\end{equation}
	and $(u_0,d_0,d_1)$ are initial data near equilibrium $(\vec{0},\vec{i},\vec{0})$ satisfying the smallness assumptions
	\begin{equation}            \label{MainAss_dini}
	\sup_{|a|\leq N_1} \{\lV  u_0^{(a)}\rV_{H^{N(a)}}+\lV \nab d_0^{(a)}\rV_{H^{N(a)}}+\lV d_1^{(a)}\rV_{H^{N(a)}}\}\leq \epsilon_0,
	\end{equation}
	where $N(a)=N_0-|a|h$ for $0\leq|a|\leq N_1$. Then there exists a unique global solution $(u,d)$ of the system (\ref{ori_sys}) with initial data
	\begin{equation*}
	u(0)=u_0,\ \ d(0)=d_0,\ \d_t d(0)=d_1,
	\end{equation*}
	satisfies the energy bounds
	\begin{gather*}
	\sup_{0\leq t\leq T}\sup_{|a|\leq N_1} \{\lV  u^{(a)}(t)\rV_{H^{N(a)}}+ \lV\nab u^{(a)}\rV_{L^2([0,t]:H^{N(a)})}\}+\sup_{0\leq t\leq T}\lV \nab_{t,x} d(t)\rV_{H^{N(0)}}\lesssim \ep_0,\\
	\sup_{0\leq t\leq T}\sup_{1\leq|a|\leq N_1}\lV \nab_{t,x} d^{(a)}(t)\rV_{H^{N(a)}}\lesssim \ep_0(1+T)^{\bar{\delta}}.
	\end{gather*}
	for any $T\in[0,\infty)$, where $\bar{\delta}<10^{-7}$ depends on $\ep_0$, $N_0, N_1,\nu_1,\nu_4$, and $\nu_5$.
\end{theorem}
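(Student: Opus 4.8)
\textbf{Overall strategy.} The plan is to set up a bootstrap (continuity) argument on a maximal time interval $[0,T]$, assuming the energy bounds in the conclusion hold with $\ep_0$ replaced by $C\ep_0$ (and the growing bound by $C\ep_0(1+T)^{\bar\delta}$), and then to improve the constant $C$, so that the bounds propagate for all time. The two ingredients feeding the bootstrap are (i) a high-order \emph{energy estimate} for the coupled Navier--Stokes/wave-map system, exploiting the dissipative structure guaranteed by the coefficient conditions \eqref{mu1mu2}, and (ii) a \emph{decay estimate} for the wave-map component $\nab_{t,x}d$ obtained via the space-time resonance method applied to the $\mathbb{S}^2$-constrained wave map, which is the novel analytic input. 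The vector fields $S,\d_t,\d_j,\tilde\Om_i$ are adapted to the coupled system: $\tilde\Om_i$ is the modification of the rotation field that commutes (up to lower-order/null terms) with both the Navier--Stokes part acting on $u$ and the wave-map part acting on $d$, which is why $u$ carries the extra matrix $A_i$ while $d$ does not.

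\textbf{Step 1: Energy estimates.} First I would derive the differential energy inequality for $Z^a u$ and $Z^a \nab_{t,x}d$ for $|a|\le N_1$. Testing the $u$-equation against $u^{(a)}$ and the $d$-equation against $\dot d^{(a)}$, the key point is that the coefficient hypotheses $\nu_4>0$, $\nu_5>-\nu_4$, $\nu_1>-2(\nu_4+\nu_5)$ make the quadratic form coming from $\tfrac{\nu_4}{2}\Delta u+\div\sigma$ coercive, producing the dissipation $\|\nab u^{(a)}\|_{L^2_tH^{N(a)}}$ in the conclusion, and that $|d|=1$ forces the nonlinearity $(-|\dot d|^2+|\nab d|^2)d$ to be genuinely quadratic (null-type) so that no term grows worse than the product of a decaying factor and an energy factor. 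The coupling terms $\div(\nab d\odot\nab d)$ and $\div\sigma$ (which is linear in $\nab u$ with $d$-dependent coefficients) are handled by writing $d=\vec i+(d-\vec i)$ and treating $d-\vec i$ as small; the commutators $[Z^a,\text{operator}]$ generate only lower-order terms because $S$ and $\tilde\Om_i$ are approximately Killing/scaling symmetries. This yields, schematically, $\tfrac{d}{dt}E_{|a|}(t)+\mathcal D_{|a|}(t)\lesssim \big(\|\nab_{t,x}d\|_{L^\infty}+\|\nab u\|_{L^\infty}+\dots\big)\,E_{\le |a|}(t)$, with the top order $|a|=0$ closing by itself and each higher level tolerating a slow $(1+t)^{\bar\delta}$ growth.

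\textbf{Step 2: Dispersive/decay estimate via space-time resonance.} Next I would establish the pointwise decay $\|\nab_{t,x}d^{(a)}(t)\|_{L^\infty}\lesssim \ep_0(1+t)^{-1+C\bar\delta}$ for $|a|$ up to roughly $N_1$, which is what makes the right-hand side of Step 1 integrable (losing only $\bar\delta$). Writing $d=\vec i+v$ with $v\perp$-ish to $\vec i$, the equation becomes $\Box v = Q(v,\nab v,\nab v)+\text{cubic}$, a quadratic nonlinearity for the wave equation in $3$D, which is borderline (quadratic wave nonlinearities in 3D are not automatically integrable). The plan is to pass to the profile $f=e^{-it|\nab|}(\d_t v+i|\nab|v)$ and analyze the bilinear Duhamel term $\int_0^t\int e^{is\Phi(\xi,\eta)}\mathfrak{m}(\xi,\eta)\hat f(\eta)\hat f(\xi-\eta)\,d\eta\,ds$, splitting phase space into regions where the phase $\Phi$ is non-stationary in time (integrate by parts in $s$), non-stationary in frequency (integrate by parts in $\eta$), and the space-time resonant set where both degenerate. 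The structure of the wave-map nonlinearity (null forms built into $Q$, coming precisely from $|d|=1$) ensures the symbol $\mathfrak{m}$ vanishes on the resonant set, so no logarithmic loss survives; combined with weighted bounds coming from the $Z^a$-controlled norms, this gives the sharp $t^{-1}$ decay up to $\ep_0^2(1+t)^{C\bar\delta}$ errors. One also needs the corresponding $L^2$-based weighted estimates (e.g. in an $X$-type norm measuring localized $L^2$ mass of the profile) to run the resonance decomposition; these are fed by Step 1.

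\textbf{Step 3: Close the bootstrap and conclude.} Inserting the decay from Step 2 into the energy inequality of Step 1 gives $\tfrac{d}{dt}E_{|a|}\lesssim \ep_0(1+t)^{-1+C\bar\delta}E_{\le|a|}$, hence by Gronwall $E_{|a|}(t)\lesssim \ep_0^2(1+t)^{C'\bar\delta}$, and the dissipative term integrates to $O(\ep_0^2)$; choosing $\bar\delta<10^{-7}$ and $\ep_0$ small makes all constants beat the bootstrap constant $C$, so the a priori bounds improve and $T=\infty$. Local existence and uniqueness (for data of the stated regularity) is the hyperbolic–parabolic coupled local theory, essentially as in \cite{Jiang-Luo-2018}, and the global solution is obtained by continuation. \textbf{The main obstacle} I anticipate is Step 2: making the space-time resonance analysis actually close for a \emph{quasilinear} wave map (variable-coefficient principal part from the constraint, and the coupling to $u$ which is only dispersively neutral, being dissipative rather than dispersive), in particular tracking that every dangerous interaction either carries a null symbol vanishing on the resonant set or is paired with the Navier--Stokes dissipation — this is where the bulk of the technical work, and the precise choice of the parameters $N_0=60$, $N_1=6$, $h=6$, lives.
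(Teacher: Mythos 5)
Your overall architecture (bootstrap continuity argument, vector fields $S,\partial,\tilde\Om$, high-order energy with dissipation from \eqref{mu1mu2}, decay via space-time resonances, weighted $L^2$ profile bounds) matches the paper's. But your Step 2 misidentifies where the space-time resonance method is actually needed, and this is a genuine gap rather than a stylistic difference. The paper first writes $d$ in polar coordinates \eqref{d_pres}, which removes the constraint and shows that the wave-map self-interaction $(-|\dot d|^2+|\nabla d|^2)d$ becomes \emph{cubic} in $(\phi_1,\phi_2)$ near the equilibrium $\vec i$ (the terms $Err2$ carry prefactors $\tan\phi_2$, $\sin 2\phi_2$); there is no quadratic null-form wave self-interaction left in the $\phi$-equation to analyze. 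The only quadratic terms in the wave equation are the fluid–director couplings $\partial_t u\cdot\nabla\phi$ and $u\cdot\nabla\partial_t\phi$, and it is the latter — a bilinear term with one \emph{non-dispersive} (parabolic) input — that forces the resonance analysis, with phase $|\xi|-\nu|\eta|$, empty space-resonant set, and a crucial use of $\div u=0$ to convert $\eta\cdot\widehat{u}(\xi-\eta)$ into $\xi\cdot\widehat{u}(\xi-\eta)$ in the low-high regime. Your plan of exhibiting a null symbol vanishing on the resonant set of a wave-wave interaction would have nothing to act on here, and conversely you give no mechanism for the mixed term, which cannot be handled by wave-wave normal forms since one factor has no phase.

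Two further ingredients you omit are load-bearing. First, the Leslie stress $\div\sigma$ produces an anisotropic, non-diagonal second-order operator $\tilde L$ in the $u$-equation; coercivity alone gives the $L^2_t\dot H^1$ dissipation but not the pointwise-in-time decay of $u$, $\nabla u$, $\nabla^2 u$ in $L^2$ and $L^p$ that the energy and resonance arguments consume. The paper must diagonalize via $v=\mathbb{U}u$ in \eqref{U} and prove semigroup decay \eqref{Dec_heat} for $e^{-tL}$ under \eqref{mu1mu2}. Second, $\div\sigma$ also contributes the quadratic source $\nabla\otimes(\phi\otimes\nabla u)$ to the velocity equation, for which the Duhamel/heat-decay argument does not close by itself and must be supplemented by the $L^\infty$ decay of $\phi$ and $L^2$ bounds on $\nabla^2 u$. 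Without these, Steps 1 and 3 of your outline cannot be executed as written.
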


\begin{remark}
(i) Even though only the case $\nu_1$, $\nu_5\geq0$ was considered in \cite{Jiang-Luo-2018}, their local wellposedness results for small data hold true when $\nu_1$ or $\nu_5$ is negative. 

(ii)  We don't intend to get optimal regularity, hence we choose $N_0,N_1$ and $h$ sufficiently large just for convenience. In fact, the regularity index may be lowered.
\end{remark}

To understand the stress tensor $\div \sigma$ and the term $(-|\dot{d}|^2+|\nab d |^2)d$ more clearly, we introduce the polar coordinates $\phi_1,\phi_2$. Precisely, $\phi_1$ represents the angle between $x$-axis and projection of $d$ onto $x-y$ plane and $\phi_2$ represents the angle between $d$ and $x-y$ plane, namely
\begin{equation}          \label{d_pres}
d=(\cos\phi_1\cos\phi_2,\sin\phi_1\cos\phi_2,\sin\phi_2).
\end{equation}
Then by the orientation $d$ near $\vec{i}$, we know that the  angles $\phi_1,\phi_2$ are near $0$. Now the systerm (\ref{ori_sys}) can be rewritten as the $(u,\phi)$-system:
\begin{equation}                   \label{Main_Sys}
\left\{\begin{aligned}
\partial_t u+\tilde{L} u   & =-u\cdot\nabla u-\nab p-\d_j(\nab \phi\cdot \d_j \phi)+\nab\otimes(\phi\otimes\nab u)+Err11+Err12, \\
\div u & =0,\\
\d_t^2 \phi-\Delta\phi&=-\d_tu\cdot\nab \phi-2u\cdot\nab\d_t\phi-u\cdot\nab(u\cdot\nab\phi)+Err2
\end{aligned}
\right.
\end{equation}
where the second order linear operator $\tilde{L}$ is defined by
\begin{equation*}
\tilde{L}u:=-\frac{\nu_4}{2}\Delta u-(\frac{\nu_5}{2}\Delta u_1+(\nu_1+\nu_5)\d_1^2 u_1,\frac{\nu_5}{2}(\d_1^2 u_2+\d_1\d_2 u_1),\frac{\nu_5}{2}(\d_1^2 u_3+\d_1\d_3 u_1))^{\top},
\end{equation*}
the quadratic terms are
\begin{align*}
\nab\otimes(\phi\otimes\nab u):=&\nu_1\left(\begin{array}{ccc}
\d_2(A_{11}\phi_1)+\d_3(A_{11}\phi_2)+2\d_1(\phi_1A_{12}+\phi_2A_{13})\\
\d_1(A_{11}\phi_1)\\
\d_1(A_{11}\phi_2)
\end{array}\right)\\
&+\nu_5\left(\begin{array}{ccc}
\d_2(\phi_1A_{11})+\d_3(\phi_2A_{11})+\d_1(\phi_1A_{21}+\phi_2A_{31})+\d_j(\phi_1A_{2j}+\phi_2A_{3j})\\
\d_2(\phi_1A_{12})+\d_3(\phi_2A_{12})+\d_1(\phi_1A_{22}+\phi_2A_{32})+\d_j(\phi_1A_{1j})\\
\d_2(\phi_1A_{13})+\d_3(\phi_2A_{13})+\d_1(\phi_1A_{23}+\phi_2A_{33})+\d_j(\phi_2A_{1j})
\end{array}\right),
\end{align*}
and the error terms are 
\begin{gather*}
Err11=(Err11_1,Err11_2,Err11_3)^{\top},\ Err12:=\d_j(\sin^2 \phi_2\nab\phi_1 \d_j\phi_1),\\
Err2:=(2\tan\phi_2(\dot{\phi_1}\dot{\phi_2}-\nab\phi_1\nab\phi_2),\frac{1}{2}\sin 2\phi_2(-|\dot{\phi_1}|^2+|\nab\phi_1|^2)^{\top},
\end{gather*}
where
\begin{align*}
Err11_i=&\nu_1\d_j (A_{kp}\int_0^1 (\phi_1\d_1+\phi_2\d_2)^2(d_kd_pd_i d_j)(s\phi_1,s\phi_2)(1-s)ds)\\
&+\nu_5\d_j(A_{ki}\int_0^1 (\phi_1\d_1+\phi_2\d_2)^2(d_jd_k)(s\phi_1,s\phi_2)(1-s)ds)\\
&+\nu_5\d_j(A_{kj}\int_0^1 (\phi_1\d_1+\phi_2\d_2)^2(d_id_k)(s\phi_1,s\phi_2)(1-s)ds).
\end{align*}
We refer the readers to the appendix for details. 

Now it suffices to consider the system (\ref{Main_Sys}). For simplicity, we denote
\[
\phi^{(a)}:=Z^{a}\phi.
\] 
We establish the following result:
\begin{theorem}\label{Main_thm}
	With the notation and hypothesis in Theorem \ref{Ori_thm}, the initial data $(u_0,\phi_{j,0},\phi_{j,1})$ satisfies
	\begin{equation}            \label{MainAss_ini}
	\sup_{|a|\leq N_1} \big\{\lV  u_0^{(a)}\rV_{H^{N(a)}}+\sum_{j=1}^2(\lV \nab \phi_{j,0}^{(a)}\rV_{H^{N(a)}}+\lV \phi_{j,1}^{(a)}\rV_{H^{N(a)}})\big\}\leq \epsilon_0.
	\end{equation}
	Then there exists a unique global solution $(u,\phi)$ of the system (\ref{Main_Sys}) with initial data
	\begin{equation*}
	u(0)=u_0,\ \ \phi_j(0)=\phi_{j,0},\ \d_t \phi_j(0)=\phi_{j,1}, {\rm for}\ j=1,2,
	\end{equation*}
	satisfies the energy bounds
	\begin{gather*}
	\sup_{0\leq t\leq T}\sup_{|a|\leq N_1} \{\lV  u^{(a)}(t)\rV_{H^{N(a)}}+ \lV\nab u^{(a)}\rV_{L^2([0,t]:H^{N(a)})} \}+\sup_{0\leq t\leq T}\lV \nab_{t,x} \phi(t)\rV_{H^{N(0)}} \lesssim \ep_0,\\\label{Bdphia}
	\sup_{0\leq t\leq T}\sup_{1\leq|a|\leq N_1}\lV \nab_{t,x} \phi^{(a)}(t)\rV_{H^{N(a)}}\lesssim \ep_0(1+T)^{\bar{\delta}}.
	\end{gather*}
	for any $T\in[0,\infty)$, where $\bar{\delta}<10^{-7}$ depends on $\ep_0$, $N_0, N_1,\nu_1,\nu_4$, and $\nu_5$.
\end{theorem}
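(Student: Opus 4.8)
The proof is a global bootstrap (continuity) argument for the $(u,\phi)$-system \eqref{Main_Sys}. Local well-posedness in the relevant spaces follows by adapting the small-data local theory of \cite{Jiang-Luo-2018} to the polar variables of \eqref{d_pres}, so the entire content is the a priori estimate: on the maximal interval $[0,T]$ one assumes the two displayed energy bounds hold with $\ep_0$ replaced by a large multiple $C_0\ep_0$ and with $\bar{\delta}$ replaced by a slightly larger exponent, together with a set of pointwise decay bounds for $u$ and for $\nab_{t,x}\phi$, and one recovers all of them with the constants strictly improved. The estimates are organized around the commuting vector fields $Z^a=S^{a_1}\Gamma^{a'}$ with the decreasing Sobolev budget $N(a)=N_0-|a|h$, which leaves room to lose $h$ derivatives each time a vector field is commuted through the equations. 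The coefficient hypotheses \eqref{mu1mu2} enter in exactly one structural way: they guarantee that $\int \tilde L u\cdot u\,dx\gtrsim\|\nab u\|_{L^2}^2$ for divergence-free $u$ and that $|\widehat{e^{-t\tilde L}u}(\xi)|\le e^{-ct|\xi|^2}|\hat u(\xi)|$ on $\{\hat u(\xi)\cdot\xi=0\}$, i.e. the velocity equation is genuinely, though anisotropically, parabolic.

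\textbf{Energy estimates.} Apply $Z^a$ to \eqref{Main_Sys}. Since $\tilde{\Om}_i$ is the correct rotation generator on vector fields and $S$, $\d$, $\tilde{\Om}_i$ commute with $\tilde L$, $\d_t^2-\De$ and $\div$ up to lower-order and absorbable errors, the result is again an equation for $(u^{(a)},\phi^{(a)})$ with right-hand side $Z^a(\text{quadratic and error terms})$ plus commutator terms of strictly lower vector-field order. Pairing the $u^{(a)}$-equation with $u^{(a)}$ (the pressure drops because $u^{(a)}$ stays divergence-free, and the anisotropic part of $\tilde L$ is controlled by the positivity above) and the $\phi^{(a)}$-equation with $\d_t\phi^{(a)}$, and summing over $H^{N(a)}$, gives the basic energy identities. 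Every nonlinear term is then estimated by placing the lower-order, pointwise-decaying factor in $L^\infty$ and the remaining factor in $L^2$: the transport term $2u\cdot\nab\d_t\phi$ integrates to zero against $\d_t\phi^{(a)}$ by $\div u=0$, the term $\nab\otimes(\phi\otimes\nab u)$ is a small-coefficient second-order perturbation absorbed by the dissipation, and the wave self-interaction $\d_j(\nab\phi\cdot\d_j\phi)$ together with the (cubic, null-structured) $Err2$ contributions produce time integrands decaying like $s^{-2+}$ that are integrable. The $u$-energies and the lowest-order $\nab_{t,x}\phi$-energy then close at level $\ep_0$; for $1\le|a|\le N_1$ one is left with one or two borderline-logarithmic time integrals, precisely the interactions in which no factor can be given the full parabolic rate but only the slower wave rate, and these are compatible only with the slow growth $(1+T)^{\bar{\delta}}$.

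\textbf{Decay via space--time resonances.} The pointwise bounds come from the Duhamel formulas. For $u$ one writes $u(t)=e^{-t\tilde L}u_0+\int_0^t e^{-(t-s)\tilde L}\PP\,\mathcal N_u(s)\,ds$; the linear term decays parabolically, and for the nonlinear term one distributes the outer derivatives onto the semigroup, uses the family of $L^p\to L^\infty$ heat bounds for $e^{-\tau\tilde L}\PP\d$, and splits the $s$-integral at $s=t/2$ (near $s=0$ using boundedness of the quadratic source, away from the diagonal using its now rapidly decaying $L^p$ norms), obtaining the $L^\infty$-decay of $u$ and of its $Z$-derivatives of order $\le N_1$, with the usual small loss. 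For $\phi$ one introduces the half-wave profiles $f_\pm=e^{\mp it|\nab|}(\d_t\pm i|\nab|)\phi$, notes that \emph{every} quadratic term of the $\phi$-equation carries a factor of the fast-decaying, parabolically smoothed $u$ (the only $\phi$-only contribution, $Err2$, being cubic with a $Q_0$-type null form $\dot\phi_i\dot\phi_j-\nab\phi_i\cdot\nab\phi_j$), and expands each interaction as $\int_0^t\!\int e^{is\Phi(\xi,\eta)}m(\xi,\eta)\,\widehat{g_1}\,\widehat{g_2}\,d\eta\,ds$ with $\Phi=|\xi|\mp|\eta|\mp|\xi-\eta|$. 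Localizing dyadically and splitting into the region near the time-resonant set $\{\Phi=0\}$, the region near the space-resonant set $\{\nab_\eta\Phi=0\}$, and the complement, one integrates by parts in $s$ off the former and in $\eta$ off the latter; the derivative loss of the wave propagator is paid by the parabolic gain on the $u$-input and by the symbol $m$, whose relevant factors vanish on the resonant sets in the borderline frequency configurations. The residual genuinely resonant pieces (low--low and low--high output frequencies, and the directions distinguished by the anisotropy of $\tilde L$) produce at most a $\log t\sim t^{\bar{\delta}}$, which is exactly the slack built into the bootstrap. Feeding the energy and decay estimates into one another, and choosing $\ep_0$ small and $\bar{\delta}<10^{-7}$, closes the continuity argument and yields global existence on $[0,\infty)$ with the stated bounds.

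\textbf{Main obstacle.} The difficulty is not a single intractable term but the need to interlock three mechanisms with \emph{matched} rates and parameters: the parabolic decay of $u$ is only of the order $t^{-3/4}$ (the data is merely $H^{N_0}$, not $L^1$), so the Duhamel estimates for $u$ must be squeezed near the time-diagonal by a careful, $p$-dependent splitting; the wave field $\phi$ decays only like $t^{-1+}$, so the couplings $u\cdot\nab(\cdot)$ and $\d_j(\nab\phi\cdot\d_j\phi)$ sit right at the boundary of integrability in time and require the null and divergence structure, plus the space--time resonance bookkeeping, to avoid a genuine polynomial loss; and the scaling field $S$ is the \emph{wave} scaling, not the parabolic one, so its commutator with $\tilde L$ generates an extra $\De u$ that must be reabsorbed in the dissipation at every order. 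Concretely, the heaviest single piece is the production of $u$ at low output frequency from the slowly decaying quadratic sources, where the smoothing of $e^{-(t-s)\tilde L}$ is weakest and the time-resonant set is largest; extracting an integrable-in-time bound there, rather than an honest $t^{\bar{\delta}}$ growth on $u$ itself, is where the bulk of the technical work lies.
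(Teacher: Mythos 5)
Your proposal follows essentially the same route as the paper: Theorem \ref{Main_thm} is reduced to the bootstrap Proposition \ref{Main_Prop}, whose proof combines vector-field energy estimates with the decreasing regularity budget $N(a)$, parabolic Duhamel decay for the velocity using the coercivity supplied by (\ref{mu1mu2}) (which the paper extracts by first diagonalizing $\P\tilde{L}$ via $v=\U u$), dispersive decay for $\phi$ through the profile $\Psi=e^{-it|\nab|}\Phi$, and integration by parts in time and in frequency for the critical term $u\cdot\nab\d_t\phi$, accepting $\<t\>^{\bar{\delta}}$ growth at positive vector-field order. The one substantive deviation is your phase: since $u$ carries no wave oscillation, the relevant phase is $|\xi|-\nu|\eta|$ rather than $|\xi|\mp|\eta|\mp|\xi-\eta|$ --- this is precisely what makes the space-resonant set empty and lets the low-high interactions be closed by integrating by parts in $\eta$ --- and the paper bootstraps the weighted $L^2$ bounds (\ref{Main_Prop_Ass2}) on $\nab_{\xi}\widehat{\Psi^{(a)}}$ rather than pointwise decay of $\phi$ directly.
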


\subsection{Main ideas} 
The main strategy to prove global regularity relies on an interplay between the control of high order energies and decay estimates, which is based on the method of space-time resonances developed by Germain, Masmoudi and Shatah \cite{GMS08,GMS11,GMS12}. The main ingredients include decay estimates, energy and weighted energy estimates and $L^2$-bounds on the derivatives of profile $\widehat{\Psi}=\FF(e^{-it|\nab|}\Phi)$ associated with normalized solution $\Phi=\d_t\phi+i|\nab|\phi$. However, there are still some difficulties to get around.

\

In the proof of the Theorem \ref{Main_thm}, we need various decay estimates of $u$. However, for the decay of $u$, the presence of linear operator $\tilde{L}$ coming from $\div \sigma$ brings the first difficulty. In order to get around the difficulty, we introduce the vector $v$ and diagonalize the $u$-equation, i.e 
\begin{equation}          \label{U}
v=\mathbb{U}u:=(u_1,\tfrac{-i\d_2}{\sqrt{-\d_2^2-\d_3^2}}u_2+\tfrac{-i\d_3}{\sqrt{-\d_2^2-\d_3^2}}u_3,\tfrac{-i\d_3}{\sqrt{-\d_2^2-\d_3^2}}u_2+\tfrac{i\d_2}{\sqrt{-\d_2^2-\d_3^2}}u_3).
\end{equation}
Then the system (\ref{Main_Sys}) can be further rewritten as 
\begin{equation}                   \label{Main_Sys-v}
\left\{\begin{aligned}
&\partial_t v+Lv  =\mathbb{U}\mathbb{P}(-\U v\cdot\nabla \U v-\sum_{j=1}^3\d_j(\nab \phi \d_j \phi )+\nab\otimes(\phi\otimes\nab u)+Err11+Err12),\\
&\div v  =0,\\
&\d_t^2 \phi-\Delta\phi=-\d_tu\cdot\nab \phi-2u\cdot\nab\d_t\phi-u\cdot\nab(u\cdot\nab\phi)+Err2
\end{aligned}
\right.
\end{equation}
where $\P$ is the Leray projection and $L$ is the operator
\begin{equation}         \label{L-def}   
L v:=\Big(-\frac{\nu_4+\nu_5}{2}\Delta v_1+\nu_1\frac{\d_1^2(\d_2^2+\d_3^2)}{|\nab|^2}v_1,-\frac{\nu_4+\nu_5}{2}\Delta v_2+\nu_1\frac{\d_1^2(\d_2^2+\d_3^2)}{|\nab|^2}v_2,(-\frac{\nu_4}{2}\Delta-\frac{\nu_5}{2}\d_1^2) v_3\Big)^{\top}.
\end{equation}
Thus we can obtain the linear decay estimates of $e^{-tL}$, which will be used for the decay of $v$.

\begin{remark}
The coefficients condition (\ref{mu1mu2}) is used to ensure that the above $v$-equation is a parabolic equation. Note that the system (\ref{Main_Sys}) is a quasilinear parabolic-hyperbolic system with dissipation and dispersion effects. Moreover, it is the dissipation of $v$ that ensures the boundedness of the higher order energies of $v$ which will be used for the higher order energy estimates of $\phi$. In fact, the global existence of the system (\ref{Main_Sys}) will be tremendously difficult without dissipation effect.
\end{remark}

\

In the proof of the decay of $v$, the quadratic term $\nab\otimes(\phi\otimes\nab u)$ coming from $\div\sigma$ brings the second difficulty. In fact, due to the presence of the quadratic term, the argument for the decay of $v$ is not closed if we only use the decay of heat semi-group $e^{-tL}$ and bootstrap assumptions. Here, we have to exploit the decay of $\phi$ in $L^{\infty}$ and $\nabla^2u$ in $L^2$.

\

In proving the energy and weighted energy estimates and $L^2$-bounds on the derivatives of profile $\widehat{\Psi}$, the quadratic term $u\cdot\nab\d_t\phi$ which comes from the second order material derivative term $(\d_t+u\cdot\nab)^2\phi$ brings the third difficulty. More precisely, since the decays of $u$ and $\nab\phi$ are not good enough, we ultilize the structure of the quadratic term to gain more decay. To get around the difficulty, we define the normalized solution $\Phi$ and its associated profile by
\begin{equation*}
\Phi:=\d_t\phi+i|\nab|\phi,\ \ \Psi:=e^{-it|\nab|}\Phi.
\end{equation*}
We write Duhamel's formula in Fourier space for the profile $\Psi$ as follows:
\begin{equation*}
\widehat{\Psi}(t,\xi)\approx\widehat{\Phi}_0(\xi)+\sum_{\nu\in\{+,-\}}\int_0^t \int_{\R^3}e^{-is(|\xi|-\nu|\eta|)} \widehat{u}(\xi-\eta)i\eta \widehat{\Psi}_{\nu}(\eta)d\eta ds,
\end{equation*}
where $\Psi_+=\Psi,\Psi_-=\bar{\Psi}$. Then from the phase $|\xi|-\nu|\eta|$ we know that the time resonant sets are
\begin{equation*}
\mathscr{T}_+=\{(\xi,\eta):|\xi|-|\eta|=0\},\ \ \mathscr{T}_-=\{(\xi,\eta):|\xi|+|\eta|=0\}=\{(0,0)\},
\end{equation*}
and space resonant set is 
\begin{equation*}
\mathscr{S}_{\pm}=\{(\xi,\eta):\nab_{\eta}(|\xi|\mp|\eta|)=0\}=\emptyset.
\end{equation*}
From this we obtain the space-time resonant set 
\begin{equation*}
\mathscr{R}_{\pm}=\mathscr{T}_{\pm}\cap \mathscr{S}_{\pm}=\emptyset.
\end{equation*}

Then, for weighted energy estimates, by $\phi$-equation one needs to consider the space-time integrals
\begin{equation}          \label{HardTerm}
\int_0^t\int_{\R^6}e^{-is(|\xi|-\nu|\eta|)}m(\xi,\eta) \overline{ \widehat{\Psi^{(a)}}}(\xi) \widehat{u^{(b)}}(\xi-\eta)\widehat{\Psi_{\nu}^{(c)}}(\eta)d\xi d\eta ds,
\end{equation}
Hence, by the above resonance analysis, for the contribution of high-low interaction, i.e $|\xi|\approx |\xi-\eta|\gg |\eta|$ and high-high interaction, i.e $|\xi|\ll|\xi-\eta|\approx|\eta|$, we may integrate by parts in time and then use the decay of $\d_s u^{(b)}$ and $\d_s\Psi^{(c)}$ to control (\ref{HardTerm}). For the contribution of low-high interaction, i.e $|\xi|\approx |\eta|>|\xi-\eta|$, since the space resonant set $\mathscr{S}$ is null, we may use integration by parts in $\eta$ and the decay of $\lV\nab u^{(b)}\rV_{L^2}$ and $\Phi^{(c)}$ to control the increment of (\ref{HardTerm})

For the $L^2$-bounds on the derivatives of $\widehat{\Psi^{(a)}}$, by bootstrap assumptions and the following observation
\begin{equation*}
\lV \FF^{-1}(|\xi|\d_{\xi}\widehat{\Psi^{(a)}})\rV_{H^N}\lesssim \lV S\Psi^{(a)}\rV_{H^N}+\lV \Om\Psi^{(a)}\rV_{H^N}+\lV \Psi^{(a)}\rV_{H^N}+\lV t\d_t\Psi^{(a)}\rV_{H^N},
\end{equation*}
it suffices to prove 
\begin{equation*}
\lV \d_t\Psi^{(a)}\rV_{H^N}\lesssim \ep_0 t^{-1+\kappa(a)\delta}.
\end{equation*}
Since the space-resonant set is empty, we may use integration by parts in $\eta$ to gain the estimate.

\subsection{Notations} The set of all Schwartz functions is called a Schwartz space and is denoted $\mathcal{S}(\R^d)$. For any $k\in\Z$,  let $k^+:=\max(k,0)$ and $k^-:=\min(k,0)$. For any $x\in\R$, denote $\<x\>:=\sqrt{1+x^2}$. For any two numbers $A$ and $B$ and a absolute constant $C$, we denote
\begin{equation*}
A\lesssim B,\ B\gtrsim A,\ \mathrm{if}\ A\leq CB.
\end{equation*}

Here we will use the following multi-index
\begin{equation}         \label{n&n0}
n:=(\alpha,\beta,\gamma)\in\N^3, {\rm and}\  n_0\in\{(1,0,0),(0,1,0),(0,0,1)\}.
\end{equation}
Denote
$$|n|:=\alpha+\beta+\gamma,\ \ \d^n:=\d^{\alpha}_{x_1}\d^{\beta}_{x_2}\d^{\gamma}_{x_3}.$$

\subsection{Outline}
In section 2, we fix notations and state the main bootstrap proposition. We also state several lemmas, such as dispersive linear bounds, Hardy-type estimate and weighted $L^{\infty}-L^2$ estimate. In section 3, we prove the linear decay estimate first, which is the key lemma to derive the decay estimate of $\d \phi$. Then using the bootstrap assumptions to derive various decay estimates of $v=\U u$ and $\d \phi$.

We then start the proof of the main bootstrap proposition in section 4 and 5, where we obtain improved energy estimates (\ref{Main_Prop_result1}) and the $L^2$ bounds (\ref{Main_Prop_result2}) on the derivation of $\widehat{\Psi}$.

\section{Preliminaries and the main propositions}

In this section, we start by summarizing our main definitions and notations.

\subsection{Some analysis tools}
The Fourier transform of $f$ is defined as follows:
\begin{equation*}
\mathcal{F}(f)(\xi):=\widehat{f}(\xi)=\int_{\R^3}e^{-ix\cdot\xi}f(x)dx.
\end{equation*}
We use $\mathcal{F}^{-1}(f)$ to denote the inverse Fourier transform of $f$. Fix an even smooth function $\varphi:\R \rightarrow [0,1] $ supported in $[-2,2]$ and equal to 1 in $[-1,1]$. For simplicity of notation, we also let $\varphi:\R^3 \rightarrow [0,1]$ denote the corresponding radial function on $\R^3$. For any $k\in \Z$, $I\subset \R$, let
\begin{gather*}
\varphi_k(x):=\varphi(x/2^k)-\varphi(x/2^{k-1}),\ \ \varphi_I(x):=\sum_{m\in I\cap \Z}\varphi_m(x),\\
\varphi_{\leq k}(x):=\sum_{l\leq k}\varphi_{l}(x),\ \varphi_{> k}(x):=\sum_{l> k}\varphi_{l}(x).
\end{gather*}
The frequency projection operator $P_k$, $P_I$, $P_{\leq k}$ and $P_{>k}$ is defined by the Fourier multiplier $\varphi_k(\xi)$, $\varphi_I(\xi)$, $\varphi_{\leq k}(\xi)$ and $\varphi_{>k}(\xi)$, i.e.
\begin{gather*}
\widehat{P_k f}(\xi)=\varphi_k(\xi)\widehat{f}(\xi),\ \ \ \widehat{P_I f}(\xi)=\varphi_I(\xi)\widehat{f}(\xi),\\
\widehat{P_{\leq k} f}(\xi)=\varphi_{\leq k}(\xi)\widehat{f}(\xi)\ \mathrm{and} \ \widehat{P_{> k} f}(\xi)=\varphi_{> k}(\xi)\widehat{f}(\xi).
\end{gather*}
Moreover, we have the following Bernstein inequality: For any $k\in \Z$,
\begin{equation}        \label{BernIneq}
	\lV P_k f\rV_{L^q}\lesssim 2^{3k(\frac{1}{p}-\frac{1}{q})}\lV P_k f\rV_{L^p},\ \ \ 1\leq p\leq q\leq \infty.
\end{equation}

Let
\begin{equation*}
\mathcal{J}:=\{(k,j)\in\Z\times\Z_+:k+j\geq 0\}.
\end{equation*}
For any $(k,j)\in\mathcal{J}$, let
\begin{equation*}
\tilde{\varphi}_j^{(k)}(x):=\left\{
\begin{aligned}
&\varphi_{\leq -k}(x),\ \mathrm{if}\ k+j=0\ \mathrm{and}\ k\leq 0,\\
&\varphi_{\leq 0}(x),\ \mathrm{if}\ j=0\ \mathrm{and}\ k\geq 0,\\
&\varphi_{j}(x),\ \mathrm{if}\ k+j\geq 1\ \mathrm{and}\ j\geq 1,
\end{aligned}
\right.
\end{equation*}
then for any $k\in\Z$ fixed, $\sum_{j\geq \max(-k,0)}\tilde{\varphi}_j^{(k)}=1$. For any $(k,j)\in\mathcal{J}$, we define the operator $Q_{jk}$ by
\begin{equation*}
(Q_{jk}f)(x):=\tilde{\varphi}_j^{(k)}(x)\cdot P_kf(x),
\end{equation*}
and denote
\begin{equation*}
f_{j,k}(x):=P_{[k-2,k+2]}Q_{jk}f(x).
\end{equation*}
Then $P_k f$ for fix $k\in\Z$ can be decomposed as
\begin{equation}      \label{Decom_Pkf}
P_k f(x)=P_{[k-2,k+2]}P_kf(x)=P_{[k-2,k+2]}\sum_{j\geq \max(-k,0)}\tilde{\varphi}_j^{(k)}(x)P_kf(x)=\sum_{j\geq \max(-k,0)}f_{j,k}(x).
\end{equation}
Moreover, for any $f\in L^2(\R^3)$ and $(k,j)\in \mathcal{J}$, let $\beta:=1/1000$, we have
\begin{equation}    \label{Dec_fjk}
\lV \widehat{f_{j,k}}\rV_{\Lf}\lesssim \min\{2^{3j/2}\lV Q_{jk}f\rV_{L^2},\ 2^{j/2-k}2^{\beta(j+k)}\sum_{|\alpha|\leq 1}\lV Q_{jk}\Om^{\alpha}f\rV_{L^2}\},
\end{equation}
see \cite[Lemma 3.4.]{IoPa}.

We state several decay estimates and dispersive estimates.
\begin{lemma}[Decay estimates]
	(\romannumeral1) For any Schwartz function $f\in \mathcal{S}(\R^3)$, we have
	\begin{gather}        \label{Dec_heat}
	\lV e^{-tL}|\nab|^l f\rV_{\dot{W}^{N,q}}\lesssim t^{-\frac{3}{2}(\frac{1}{p}-\frac{1}{q})-\frac{l}{2}} \lV f\rV_{\dot{W}^{N,p}},\ \ \ \  1\leq p\leq q\leq \infty.
	\end{gather}
\end{lemma}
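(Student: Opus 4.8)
The plan is to reduce \eqref{Dec_heat} to a kernel estimate by diagonalizing $L$ and exploiting parabolic scaling. Reading off \eqref{L-def}, $L$ acts diagonally on the three components of $v$ as the scalar Fourier multipliers
\[
p_1(\xi)=p_2(\xi)=\tfrac{\nu_4+\nu_5}{2}|\xi|^2+\nu_1\tfrac{\xi_1^2(\xi_2^2+\xi_3^2)}{|\xi|^2},\qquad
p_3(\xi)=\tfrac{\nu_4+\nu_5}{2}\xi_1^2+\tfrac{\nu_4}{2}(\xi_2^2+\xi_3^2),
\]
each homogeneous of degree $2$ and smooth away from $\xi=0$. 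The first step is to invoke \eqref{mu1mu2} to prove the uniform ellipticity bound $p_m(\xi)\ge c|\xi|^2$ ($m=1,2,3$) for a constant $c=c(\nu_1,\nu_4,\nu_5)>0$: for $p_3$ both coefficients $\tfrac{\nu_4+\nu_5}{2}$ and $\tfrac{\nu_4}{2}$ are positive since $\nu_4>0$ and $\nu_5>-\nu_4$; for $p_1=p_2$ one uses $0\le \xi_1^2(\xi_2^2+\xi_3^2)/|\xi|^2\le|\xi|^2/4$ together with $\nu_4+\nu_5>0$ and $\nu_1>-2(\nu_4+\nu_5)$. This is exactly where \eqref{mu1mu2} enters (cf.\ the remark following \eqref{L-def}), and it yields the pointwise Gaussian domination $e^{-tp_m(\xi)}\le e^{-ct|\xi|^2}$.

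Next I would write $e^{-tL}|\nab|^l$ componentwise as convolution with the kernel $G_{m,l,t}:=\FF^{-1}(|\xi|^l e^{-tp_m})$. By the degree-$2$ homogeneity of $p_m$ one has the exact parabolic scaling $G_{m,l,t}(x)=t^{-(3+l)/2}\,G_{m,l,1}(t^{-1/2}x)$, hence for every $r\in[1,\infty]$,
\[
\|G_{m,l,t}\|_{L^r(\R^3)}=t^{-\frac32(1-\frac1r)-\frac l2}\,\|G_{m,l,1}\|_{L^r(\R^3)}.
\]
The analytic core is then the $t$-free claim $G_{m,l,1}\in L^1(\R^3)\cap L^\infty(\R^3)$, so that $G_{m,l,1}\in L^r(\R^3)$ for all $r\in[1,\infty]$. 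The $L^\infty$ bound is immediate from Gaussian domination, which makes $|\xi|^l e^{-p_m(\xi)}\in L^1(\R^3)$. For the $L^1$ bound I would decompose dyadically in frequency, $G_{m,l,1}=\sum_{k\in\Z}\FF^{-1}(\varphi_k|\xi|^l e^{-p_m})$, estimate the symbol and its derivatives on each shell $|\xi|\sim 2^k$ (a Mikhlin-type bound at scale $2^k$, with the Gaussian factor $e^{-c2^{2k}}$ controlling large $k$), and sum; this gives $|G_{m,l,1}(x)|\lesssim\langle x\rangle^{-3-\min(l,2)}$ for $l>0$ and $\lesssim\langle x\rangle^{-5}$ for $l=0$ (using the non-smoothness of $p_1=p_2$ at the origin, resp.\ the Schwartz character of $e^{-p_3}$), in every case decay strictly faster than $\langle x\rangle^{-3}$, which is the $L^1$-integrability threshold in dimension $3$; hence $G_{m,l,1}\in L^1(\R^3)$.

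With these two kernel bounds, \eqref{Dec_heat} follows from Young's convolution inequality: choosing $r$ by $\tfrac1r=1+\tfrac1q-\tfrac1p\in[0,1]$, so that $1-\tfrac1r=\tfrac1p-\tfrac1q$, one gets
\[
\|e^{-tL}|\nab|^l f\|_{L^q}\lesssim\sum_{m=1}^3\|G_{m,l,t}\|_{L^r}\,\|f\|_{L^p}\lesssim t^{-\frac32(\frac1p-\frac1q)-\frac l2}\,\|f\|_{L^p}.
\]
The $\dot W^{N,q}$ statement is then immediate, since $\nab^N$ is a constant-coefficient multiplier commuting with $e^{-tL}|\nab|^l$, so one applies the bound just obtained to $\nab^N f$.

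The main obstacle is the $L^1$ kernel bound. Because $p_1=p_2$ are homogeneous of degree $2$ but are not polynomials, they are only $C^{1,1}$ at $\xi=0$; consequently $G_{m,l,1}$ is not Schwartz and one cannot ask for arbitrary polynomial decay. The dyadic argument has to be set up so as to extract the (essentially sharp) decay dictated jointly by the non-smoothness of $p_m$ at the origin and by the factor $|\xi|^l$ — decay that in every admissible case $l\ge 0$ still beats $\langle x\rangle^{-3}$, hence keeps $G_{m,l,1}$ in $L^1(\R^3)$. Everything else is routine bookkeeping with dyadic symbol estimates and Young's inequality.
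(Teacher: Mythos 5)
Your proposal is correct and follows essentially the same route as the paper: the paper's proof consists of exactly your two ellipticity bounds for the symbols $p_1=p_2$ and $p_3$ under \eqref{mu1mu2}, followed by the remark that one then argues as for the heat semigroup. The only substantive content the paper leaves implicit — that $e^{-p_1}$ is not Gaussian because $\xi_1^2(\xi_2^2+\xi_3^2)/|\xi|^2$ is merely $C^{1,1}$ at the origin, so one must still verify $\FF^{-1}(|\xi|^l e^{-p_m})\in L^1\cap L^\infty$ by a dyadic/integration-by-parts argument before scaling and Young's inequality apply — is precisely what you supply, and your (not always sharp, but valid) decay bounds all beat $\langle x\rangle^{-3}$, which is what is needed.
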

\begin{proof}
	By (\ref{mu1mu2}), we obtain
	\begin{equation*}         \label{H33}
	\frac{\nu_4}{2}|\xi|^2+\frac{\nu_5}{2}\xi_1^2=\frac{\nu_4+\nu_5}{2}\xi_1^2+\frac{\nu_4}{2}(\xi_2^2+\xi_3^2),
	\end{equation*}
	and
	\begin{equation*}       \label{H1122}
	\begin{aligned}
	\frac{\nu_4+\nu_5}{2}|\xi|^2+\nu_1 \frac{\xi_1^2}{|\xi|^2}(\xi_2^2+\xi_3^2)
	=& \frac{\nu_4+\nu_5}{2}|\xi|^2\Big(1+\frac{2\nu_1}{\nu_4+\nu_5}\big(\frac{\xi_1^2}{|\xi|^2}-\frac{\xi_1^4}{|\xi|^4}\big)\Big)\\
	\geq & \frac{\nu_4+\nu_5}{2}|\xi|^2\cdot\min\big\{1,1+\frac{\nu_1}{2(\nu_4+\nu_5)}\big\}.
	\end{aligned}
	\end{equation*}
	Then by the above two bounds, we apply a similar argument to decay estimates of heat operator to $e^{-tH}$ and then obtain the bound (\ref{Dec_heat}).
	
\end{proof}

We also need the following Hardy-type estimate involving localization in frequency and space.
\begin{lemma}[Lemma 3.5, \cite{IoPa}]       \label{AkBk_lem}
	For $f\in L^2(\R^3)$ and $k\in\Z$ let
	\begin{equation*}
	F_k(f):=\lV P_k f\rV_{L^2}+\sum\limits_{l=1}^3 \lV \varphi_k(\xi)\d_{\xi_l}\widehat{f}(\xi)\rV_{L^2},\ \ B_k(f):=\big[\sum\limits_{j\geq \max(-k,0)}2^{2j}\lV Q_{jk}f\rV_{L^2}^2\big]^{1/2}.
	\end{equation*}
	Then, for any $k\in\Z$,
	\begin{equation*}
	B_k\lesssim \left\{
	\begin{aligned}
	&\sum_{|k'-k|\leq 4}F_{k'},\ \mathrm{if}\ k\geq 0,\\
	&\sum_{k'\in\Z}F_{k'}2^{-|k-k'|/2}\min\{1,2^{k'-k}\},\ \mathrm{if}\ k\leq 0.
	\end{aligned}
	\right.
	\end{equation*}
\end{lemma}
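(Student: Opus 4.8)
\textbf{Proof proposal for Lemma \ref{AkBk_lem}.}

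The plan is to prove the stated dyadic Hardy-type inequality by estimating each Littlewood--Paley piece $Q_{jk}f$ directly in terms of the frequency-localized quantities $F_{k'}(f)$, and then summing in $j$. Since $Q_{jk}f = \tilde\varphi_j^{(k)}\cdot P_k f$, the physical-space weight $\tilde\varphi_j^{(k)}$ is essentially $\mathbf 1_{|x|\sim 2^j}$ (or $\mathbf 1_{|x|\lesssim 2^{-k}}$ in the endpoint case $k+j=0$), so $2^j\lV Q_{jk}f\rV_{L^2}$ is comparable to $\lV\, |x|\, \tilde\varphi_j^{(k)} P_k f\,\rV_{L^2}$ up to the endpoint adjustments. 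The point is therefore to control $\lV\, |x| P_k f\,\rV_{L^2(|x|\sim 2^j)}$, and the obvious tool is that multiplication by $|x|$ corresponds on the Fourier side to a combination of derivatives $\d_{\xi_l}$. Concretely, for each $l$ one writes $x_l P_k f = \FF^{-1}\big(i\d_{\xi_l}(\varphi_k(\xi)\widehat f(\xi))\big) = \FF^{-1}\big(i\varphi_k(\xi)\d_{\xi_l}\widehat f(\xi)\big) + \FF^{-1}\big(i(\d_{\xi_l}\varphi_k)(\xi)\widehat f(\xi)\big)$. The first term has $L^2$-norm bounded by $\sum_l \lV\varphi_k(\xi)\d_{\xi_l}\widehat f\rV_{L^2}\le F_k(f)$; the second term, since $\d_{\xi_l}\varphi_k$ is a bump of size $2^{-k}$ supported in $|\xi|\sim 2^k$, has $L^2$-norm bounded by $2^{-k}\lV P_{[k-1,k+1]}f\rV_{L^2}\lesssim 2^{-k}\sum_{|k'-k|\le 1}F_{k'}(f)$. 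So for every $k$ we get $\lV\, |x| P_k f\,\rV_{L^2}\lesssim \sum_{|k'-k|\le 1} F_{k'}(f) + 2^{-k}\sum_{|k'-k|\le 1}F_{k'}(f)$, which already gives the large-frequency case.

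For $k\ge 0$ one combines this bound with the trivial $L^2$ bound $\lV Q_{jk}f\rV_{L^2}\le\lV P_k f\rV_{L^2}\le F_k(f)$: summing $\sum_{j\ge 0}2^{2j}\lV Q_{jk}f\rV_{L^2}^2$ one sees the low-$j$ range ($2^j\lesssim 1$, say $j\le 0$, contributing only $j=0$ here) is controlled by $F_k$, and for $j\ge 1$ one uses $2^{2j}\lV Q_{jk}f\rV_{L^2}^2 \lesssim \lV\,|x|\,P_k f\,\rV_{L^2(|x|\sim 2^j)}^2$, which after summing the almost-disjoint annuli in $j$ gives $\lV\, |x| P_k f\rV_{L^2}^2$; hence $B_k(f)\lesssim F_k(f) + \lV\,|x|P_kf\,\rV_{L^2}\lesssim \sum_{|k'-k|\le 4}F_{k'}(f)$ (the slightly larger range $|k'-k|\le 4$ absorbing the fattened projector $P_{[k-2,k+2]}$ hidden in $f_{j,k}$ and the $2^{-k}\lesssim 1$ factor). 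For $k\le 0$ one must be more careful because the weight $2^{2j}$ now ranges over $j\ge -k\ge 0$ and the factor $2^{-k}$ in the estimate above is $\ge 1$. Here the trick is to split: for the ``short'' range $j\le -k$, i.e.\ $|x|\lesssim 2^{-k}$, one does not gain from the weight and uses the crude bound, while for $j\ge -k$ one uses the $|x|$-derivative bound. One then tracks how $F_{k'}$ at frequency $k'$ feeds into $B_k$ at frequency $k\le 0$, using that $\varphi_k\d_{\xi_l}\widehat f$ and the $2^{-k}$ loss interact with the $2^{-k'}$ factors coming from the other frequency scales, producing the off-diagonal weight $2^{-|k-k'|/2}\min\{1,2^{k'-k}\}$ in the statement — the $\min\{1,2^{k'-k}\}$ reflecting that frequencies $k'<k$ are essentially harmless, and the $2^{-|k-k'|/2}$ decay coming from a Schur-test/Cauchy--Schwarz balancing of the dyadic sums in $j$ against the spatial localization.

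The main obstacle is the low-frequency case $k\le 0$: there the spatial weight $|x|$ can be as large as $2^{-k}\gg 1$ on the support of $Q_{jk}f$, so the naive bound $\lV |x|P_kf\rV_{L^2}\lesssim F_k+2^{-k}F_k$ loses a power of $2^{-k}$, and one cannot simply conclude $B_k\lesssim\sum_{|k'-k|\le 4}F_{k'}$. Recovering the correct off-diagonal decay $2^{-|k-k'|/2}$ requires genuinely exploiting the orthogonality of the annular pieces in $j$ together with the fact that $\varphi_k(\xi)\d_{\xi_l}\widehat f(\xi)$ already carries the localization — essentially an $\ell^2$-in-$j$ summation argument (Schur test) rather than a crude $\ell^1$ bound. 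This is precisely the content of Lemma 3.5 in \cite{IoPa}, so in the paper it is quoted rather than reproved; my proposal above is how I would reconstruct it, and I expect that the delicate dyadic bookkeeping in the $k\le 0$ regime, not any conceptual difficulty, is where the real work lies.
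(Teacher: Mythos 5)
The paper does not actually prove this lemma; it is quoted verbatim from Ionescu--Pausader \cite{IoPa}, so your reconstruction can only be judged on its own terms. Your treatment of the case $k\ge 0$ is correct and complete: $x_lP_kf=\FF^{-1}\big(i\,\d_{\xi_l}(\varphi_k\widehat f)\big)$, the splitting of the derivative, the bound $\lV(\d_{\xi_l}\varphi_k)\widehat f\rV_{L^2}\lesssim 2^{-k}\lV P_{[k-2,k+2]}f\rV_{L^2}$ with $2^{-k}\le 1$, and the finite overlap of the annuli $|x|\sim 2^j$ do give $B_k\lesssim\sum_{|k'-k|\le 4}F_{k'}$.

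For $k\le 0$, however, there is a genuine gap, and it is not quite where you place it. The obstruction is the single endpoint term $j=-k$, i.e.\ $2^{-k}\lV\varphi_{\le -k}(x)P_kf\rV_{L^2}$, for which your ``crude bound'' yields only $2^{-k}\lV P_kf\rV_{L^2}\le 2^{-k}F_k$ --- off by the factor $2^{-k}\gg 1$ from the diagonal contribution $F_k$ permitted by the statement. No Schur test or $\ell^2$-in-$j$ orthogonality can repair this: it is one fixed term built entirely from the single block $P_kf$, so there is nothing in $j$ to sum, and nothing in your scheme ever produces the quantities $F_{k'}$ with $|k'-k|>4$ that appear on the right-hand side. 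The missing idea is a Hardy-type inequality in the frequency variable along rays: writing $\widehat f(\rho\omega)=-\int_\rho^\infty\d_s\big(\widehat f(s\omega)\big)\,ds$ for a.e.\ $\omega\in\mathbb{S}^2$, applying Cauchy--Schwarz dyadically in $s$, and integrating over $|\xi|\sim 2^k$ gives
\begin{equation*}
2^{-k}\lV P_kf\rV_{L^2}\lesssim\sum_{k'\ge k-2}2^{-(k'-k)/2}\lV \varphi_{[k'-1,k'+1]}\nab_\xi\widehat f\rV_{L^2}\lesssim\sum_{k'\ge k-2}2^{-|k-k'|/2}F_{k'},
\end{equation*}
which is exactly the source of the off-diagonal weights $2^{-|k-k'|/2}\min\{1,2^{k'-k}\}$ (the $k'<k$ terms are simply extra room). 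The same mechanism is needed to absorb the commutator contribution $2^{-k}\lV P_{[k-2,k+2]}f\rV_{L^2}$ in the range $j>-k$. Without it your argument only yields $B_k\lesssim F_k+2^{-k}\sum_{|k'-k|\le 2}F_{k'}$ for $k\le 0$, which is strictly weaker and would not suffice for the application in \eqref{Qjk_Psi}, where the bootstrap hypothesis \eqref{Main_Prop_Ass2} controls $\lV\varphi_{k'}\,|\xi|\,\d_\xi\widehat{\Psi^{(a)}}\rV_{L^2}$, i.e.\ $2^{k'}F_{k'}$ rather than $F_{k'}$ itself at low frequencies.
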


\begin{lemma}[Lemma 2.2, \cite{Wxc}]    \label{ContSymb}
	If $f:\R^{6}\rightarrow\C$ and $k_1,k_2\in\Z$, then the following estimate holds:
	\begin{equation*}
	\lV \int_{\R^{6}} f(\xi_1,\xi_2) e^{i(x_1\cdot\xi_1+x_2\cdot\xi_2)}\varphi_{k_1}(\xi_1)\varphi_{k_2}(\xi_2)d\xi_1\xi_2\rV_{L^1_{x_1, x_2}}\lesssim \sum\limits_{m=0}^{7}\sum\limits_{j=1}^2 2^{mk_j}\lV\partial_{\xi_j}^m f\rV_{L^{\infty}}.
	\end{equation*}
\end{lemma}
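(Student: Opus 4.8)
\emph{Proof proposal.} The left side is the $L^1_{x_1,x_2}$-norm of the kernel
\[
K(x_1,x_2)=\int_{\R^{6}}f(\xi_1,\xi_2)\,e^{i(x_1\cdot\xi_1+x_2\cdot\xi_2)}\,\varphi_{k_1}(\xi_1)\varphi_{k_2}(\xi_2)\,d\xi_1 d\xi_2 .
\]
First I would set $g:=f\varphi_{k_1}\varphi_{k_2}$, so $K=\mathcal{F}^{-1}g$; note $g$ is supported in a set of measure $\lesssim 2^{3k_1+3k_2}$ and, since $|\d^{\alpha}_{\xi_j}\varphi_{k_j}|\lesssim 2^{-|\alpha|k_j}$, Leibniz gives the pointwise bound
\[
\|\d^{m_1}_{\xi_1}\d^{m_2}_{\xi_2}g\|_{\Lf}\lesssim\sum_{a\le m_1}\sum_{b\le m_2}2^{-(m_1-a)k_1-(m_2-b)k_2}\|\d^{a}_{\xi_1}\d^{b}_{\xi_2}f\|_{\Lf}.
\]
The plan is to split $\R^{3}_{x_1}\times\R^{3}_{x_2}$ into the four regions determined by $|x_1|\le 2^{-k_1}$ versus $|x_1|>2^{-k_1}$ and the same for $x_2$, estimating $K$ by the trivial volume bound in whichever variable lies in its ``core'' range and by repeated integration by parts in the others, and then integrating in $x$.

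On the core cube $\{|x_1|\le 2^{-k_1}\}\times\{|x_2|\le 2^{-k_2}\}$ I would use $|K|\le\|g\|_{L^1_\xi}\lesssim 2^{3k_1+3k_2}\|f\|_{\Lf}$ and integrate over a set of measure $\lesssim 2^{-3k_1-3k_2}$, recovering the $m=0$ term $\|f\|_{\Lf}$. When $|x_1|>2^{-k_1}$ I would pick the coordinate $i$ with $|x_{1,i}|\gtrsim|x_1|$ and integrate by parts $m_1$ times in $\xi_{1,i}$, gaining $|x_1|^{-m_1}$ and replacing $g$ by $\d^{m_1}_{\xi_{1,i}}g$, and symmetrically in $\xi_2$ when $|x_2|>2^{-k_2}$; a fixed choice such as $m_1=m_2=4$ makes $\int_{|x_1|>2^{-k_1}}|x_1|^{-4}\,dx_1\lesssim 2^{k_1}$ and likewise in $x_2$. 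Inserting the Leibniz bound and the support volume, the $x$-integration collapses the accumulated powers $2^{3k_j}\cdot 2^{(m_j-3)k_j}\cdot 2^{-(m_j-a)k_j}$ to $2^{ak_j}$, so each of the four regions contributes a finite sum of terms $2^{ak_1}\|\d^{a}_{\xi_1}f\|_{\Lf}$, $2^{bk_2}\|\d^{b}_{\xi_2}f\|_{\Lf}$, or $2^{ak_1+bk_2}\|\d^{a}_{\xi_1}\d^{b}_{\xi_2}f\|_{\Lf}$ with $a,b$ bounded by a fixed number ($4$ suffices in $\R^3$, so the truncation at $m=7$ is comfortably safe); summing over the four regions gives the asserted bound.

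An equivalent route that avoids the case split is Cauchy--Schwarz with weights: $\|K\|_{L^1_x}\le\|\langle 2^{k_1}x_1\rangle^{2}\langle 2^{k_2}x_2\rangle^{2}K\|_{L^2_x}\cdot\|\langle 2^{k_1}x_1\rangle^{-2}\|_{L^2_{x_1}}\|\langle 2^{k_2}x_2\rangle^{-2}\|_{L^2_{x_2}}$, where the last two factors are $\lesssim 2^{-3k_1/2}$ and $\lesssim 2^{-3k_2/2}$ since $\langle\cdot\rangle^{-2}\in L^2(\R^3)$, and by Plancherel the first equals a constant times $\|(1-2^{2k_1}\Delta_{\xi_1})(1-2^{2k_2}\Delta_{\xi_2})g\|_{L^2_\xi}$, which Leibniz and the support volume again bound by terms $2^{mk_j}\|\d^m_{\xi_j}f\|_{\Lf}$ (here only $m\le 2$, a fortiori inside the stated sum). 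In both approaches the one point that needs care --- and essentially the only obstacle --- is the exponent bookkeeping: verifying that the number of integrations by parts, the $2^{3k_j}$ volume of the frequency support, and the $2^{-k_j}$ price of each derivative falling on $\varphi_{k_j}$ combine to leave precisely the weight $2^{mk_j}$ in front of $\|\d^m_{\xi_j}f\|_{\Lf}$.
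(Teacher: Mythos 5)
Both routes you propose generate \emph{mixed} derivatives $\d^{a}_{\xi_1}\d^{b}_{\xi_2}f$, and these do not appear on the right-hand side of the lemma; this is a genuine gap, not a matter of exponent bookkeeping. In your region $|x_1|>2^{-k_1}$, $|x_2|>2^{-k_2}$ you integrate by parts in both frequency variables, and in the Cauchy--Schwarz variant the operator $(1-2^{2k_1}\Delta_{\xi_1})(1-2^{2k_2}\Delta_{\xi_2})$ contains the cross term $2^{2k_1+2k_2}\Delta_{\xi_1}\Delta_{\xi_2}$. A term such as $2^{4k_1+4k_2}\lV\d^{4}_{\xi_1}\d^{4}_{\xi_2}f\rV_{\Lf}$ cannot be absorbed into $\sum_{m\le 7}\sum_j 2^{mk_j}\lV\d^m_{\xi_j}f\rV_{\Lf}$: take $k_1=k_2=0$ and $f(\xi_1,\xi_2)=e^{iA(v\cdot\xi_1+w\cdot\xi_2)}$ with $|v|=|w|=1$ and $A$ large, for which the mixed term is $A^{8}$ while the lemma's right-hand side is $\lesssim A^{7}$ (and the true $L^1$ norm is $O(1)$, the kernel being a translated bump). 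So what you prove is a strictly weaker estimate that does not imply the stated one.

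The missing idea is to never differentiate in both variables simultaneously: the correct weight is the \emph{sum} $(1+2^{k_1}|x_1|)^{7}+(1+2^{k_2}|x_2|)^{7}$, not a product. One bounds $\lV K\rV_{L^1}$ by $\lV[(1+2^{k_1}|x_1|)^{7}+(1+2^{k_2}|x_2|)^{7}]K\rV_{\Lf}$ times the $L^1_{x}$ norm of the reciprocal of that sum, which is $\lesssim 2^{-3k_1-3k_2}$ precisely because $7>6=\dim(\R^6_x)$; equivalently, in your outer region one splits according to which of $2^{k_1}|x_1|$, $2^{k_2}|x_2|$ is larger and integrates by parts $7$ times in \emph{that variable only}. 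Each weighted sup is then $\lesssim 2^{3k_1+3k_2}\sum_{m\le7}2^{mk_j}\lV\d^{m}_{\xi_j}(f\varphi_{k_1}\varphi_{k_2})\rV_{\Lf}$ by the support-volume bound, and Leibniz leaves only pure $\xi_j$-derivatives of $f$ with the advertised weights. This is also why the lemma runs to $m=7$ rather than the $m=4$ you call ``comfortably safe'': gaining decay in only one of the two three-dimensional variables, you need $|x_j|^{-7}$ to integrate over all six dimensions.
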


To obtain the energy estimates, one often needs to analyze the symbols. Define a class of symbol as follows
\begin{equation*}
\mathcal{S}^{\infty}:=\{m:\R^6\rightarrow\C,\ m\ \text{is continous and } \lV\mathcal{F}^{-1}m\rV_{L^1}<\infty\},
\end{equation*}
whose associated norms are defined as
\begin{equation*}
\lV m\rV_{\mathcal{S}^{\infty}}:=\lV \mathcal{F}^{-1}m\rV_{L^1},
\end{equation*}
and
\begin{equation*}
\lV m\rV_{\mathcal{S}^{\infty}_{k,k_1,k_2}}:=\lV m(\xi,\eta)\varphi_k(\xi)\varphi_{k_1}(\xi-\eta)\varphi_{k_2}(\eta)\rV_{\mathcal{S^{\infty}}}.
\end{equation*}
Then we have

\begin{lemma}[Bilinear estimate, \cite{IoPu}]      \label{Sym_Pre_lemma}
	Given $m\in\mathcal{S}^{\infty}$ and two well-defined functions $f_1,\ f_2$, then the following estimate holds:
	\begin{gather*}
	\lV \mathcal{F}^{-1}\big(\int_{\R^3}m(\xi,\eta)\widehat{f}_1(\xi-\eta)\widehat{f}_2(\eta)d\eta\big)(x)\rV_{L^r}\lesssim\lV m\rV_{\mathcal{S}^{\infty}} \lV f_{L^1}\rV_{L^p} \lV f_2\rV_{L^q},\ \ \frac{1}{r}=\frac{1}{p}+\frac{1}{q}.
	\end{gather*}
\end{lemma}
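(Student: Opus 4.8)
The plan is to exploit the fact that the lemma is the classical statement that a bilinear Fourier multiplier whose symbol has integrable inverse Fourier transform behaves like a vector-valued convolution; once this convolution structure is made explicit, the bound follows from Minkowski's integral inequality and Hölder. Concretely, set $K:=\FF^{-1}m\in L^1(\R^6)$, so that $\lV K\rV_{L^1}=\lV m\rV_{\mathcal{S}^{\infty}}$ and, with the paper's normalization of $\FF$, one has the representation $m(\xi,\eta)=\int_{\R^6}K(y,z)\,e^{-i(y\cdot\xi+z\cdot\eta)}\,dy\,dz$.

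First I would substitute this representation of $m$ into the bilinear operator $T_m(f_1,f_2)(x):=\FF^{-1}\bigl(\int_{\R^3}m(\xi,\eta)\widehat f_1(\xi-\eta)\widehat f_2(\eta)\,d\eta\bigr)(x)$, interchange the order of integration (legitimate first for $f_1,f_2\in\mathcal{S}(\R^3)$), and change variables $(\xi,\eta)\mapsto(\xi-\eta,\eta)$ in the frequency integral so that the two frequency variables decouple. Recognizing the two resulting integrals as the inverse Fourier transforms of $\widehat f_1$ and $\widehat f_2$ evaluated at translated points then yields the pointwise identity $T_m(f_1,f_2)(x)=(2\pi)^3\int_{\R^6}K(y,z)\,f_1(x-y)\,f_2(x-y-z)\,dy\,dz$.

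From here the estimate is immediate. By Minkowski's integral inequality, $\lV T_m(f_1,f_2)\rV_{L^r}\lesssim\int_{\R^6}|K(y,z)|\,\lV f_1(\cdot-y)\,f_2(\cdot-y-z)\rV_{L^r}\,dy\,dz$; since $\tfrac1r=\tfrac1p+\tfrac1q$, Hölder's inequality together with the translation invariance of the $L^p$ and $L^q$ norms gives $\lV f_1(\cdot-y)\,f_2(\cdot-y-z)\rV_{L^r}\le\lV f_1\rV_{L^p}\lV f_2\rV_{L^q}$ uniformly in $y,z$, and integrating $|K|$ out produces exactly $\lV m\rV_{\mathcal{S}^{\infty}}\lV f_1\rV_{L^p}\lV f_2\rV_{L^q}$. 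The general case $f_1\in L^p$, $f_2\in L^q$ then follows by density.

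There is no genuine obstacle here: the argument is routine, and the only points requiring a word of care are the Fubini/change-of-variables step (settled by the density reduction just mentioned, using that $T_m$ is bounded $L^p\times L^q\to L^r$ once the estimate is known on Schwartz functions) and bookkeeping of the convention-dependent constant relating $m$ to $K$. We also note that the right-hand side of the displayed inequality contains an evident typo: $\lV f_{L^1}\rV_{L^p}$ should read $\lV f_1\rV_{L^p}$.
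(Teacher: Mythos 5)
Your argument is correct and complete: writing $K=\FF^{-1}m$, expressing the bilinear operator as $T_m(f_1,f_2)(x)=c\int_{\R^6}K(y,z)f_1(x-y)f_2(x-y-z)\,dy\,dz$, and then applying Minkowski and H\"older with translation invariance is exactly the standard proof of this classical bilinear multiplier bound. The paper itself gives no proof but simply cites \cite{IoPu}, where the same kernel-representation argument is used, so your proposal fills in the intended (routine) justification; you are also right that $\lV f_{L^1}\rV_{L^p}$ in the statement is a typo for $\lV f_1\rV_{L^p}$.
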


By standard Littlewood-Paley decomposition and H\"{o}lder, we can obtain the following Hardy-type inequality.
\begin{lemma}[Hardy-type inequality]    \label{Hardy}
	For any $1< p< \infty$ and two well-defined functions $g_1$, $g_2$, then the following estimates holds:
	\begin{gather}    \label{Hardy_1}
	\lV g_1g_2\rV_{\dot{H}^N}\lesssim \sum_k 2^{Nk^+}\lV P_k g_1\rV_{L^{\infty}} \lV g_2\rV_{\dot{H}^N},\\   \label{Hardy_2}
	\lV g_1g_2\rV_{W^{N,p}}\lesssim \lV g_1\rV_{W^{N,p}} \lV g_2\rV_{L^{\infty}}+\lV g_1\rV_{L^{\infty}} \lV g_2\rV_{W^{N,p}}.
	\end{gather}
\end{lemma}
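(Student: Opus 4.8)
The plan is to prove both inequalities by a Littlewood--Paley (paraproduct) decomposition of the product $g_1g_2$, estimating the resulting pieces with the Bernstein inequality \eqref{BernIneq} and H\"older's inequality; this is the classical fractional Leibniz / Kato--Ponce type argument that the statement already advertises as standard. Writing
\begin{equation*}
\begin{aligned}
g_1g_2=\sum_{k_1,k_2\in\Z}P_{k_1}g_1\,P_{k_2}g_2
={}&\underbrace{\sum_{k_1\le k_2-4}P_{k_1}g_1\,P_{k_2}g_2}_{\text{(LH)}}
+\underbrace{\sum_{k_2\le k_1-4}P_{k_1}g_1\,P_{k_2}g_2}_{\text{(HL)}}\\
&+\underbrace{\sum_{|k_1-k_2|\le 3}P_{k_1}g_1\,P_{k_2}g_2}_{\text{(HH)}},
\end{aligned}
\end{equation*}
one records that after applying $P_k$ the output frequency is $\simeq 2^{k_2}$ in (LH), $\simeq 2^{k_1}$ in (HL), and $\lesssim 2^{\max(k_1,k_2)}$ in (HH).

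For \eqref{Hardy_1} I would use the square--function characterization $\lV f\rV_{\dot{H}^N}\approx\big(\sum_k 2^{2Nk}\lV P_k f\rV_{L^2}^2\big)^{1/2}$. In (LH) the sum collapses to $k_2=k+O(1)$; Bernstein bounds the low factor $\lV P_{\le k-4}g_1\rV_{L^\infty}\le\sum_{k_1}\lV P_{k_1}g_1\rV_{L^\infty}\le\sum_{k_1}2^{Nk_1^+}\lV P_{k_1}g_1\rV_{L^\infty}$, while $\big(\sum_k 2^{2Nk}\lV P_{[k-2,k+2]}g_2\rV_{L^2}^2\big)^{1/2}\lesssim\lV g_2\rV_{\dot{H}^N}$. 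In (HL) and (HH) the factor $g_1$ carries the output frequency, so I place each $P_{k_1}g_1$ in $L^\infty$ with the weight $2^{Nk_1^+}$ and the $g_2$--factor in $L^2$; summing in the remaining frequency index (a Cauchy--Schwarz, or a Young inequality for the convolution over the frequency gap) together with $2^{Nk^+}\lV P_kg_1\rV_{L^\infty}\le\sum_{k_1}2^{Nk_1^+}\lV P_{k_1}g_1\rV_{L^\infty}$ for the relevant $k$ then reassembles the $g_2$--part into $\lV g_2\rV_{\dot{H}^N}$, up to harmless shifts. Adding the three contributions yields \eqref{Hardy_1}.

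For \eqref{Hardy_2}, with $1<p<\infty$ I would use the Triebel--Lizorkin characterization $\lV f\rV_{W^{N,p}}\approx\lV f\rV_{L^p}+\big\lV\big(\sum_k 2^{2Nk}|P_kf|^2\big)^{1/2}\big\rV_{L^p}$, valid in this range by the Mihlin multiplier theorem and the vector--valued Littlewood--Paley inequality. The same three pieces are handled by H\"older: in (LH) one puts the low factor of $g_1$ in $L^\infty$ and the high factor of $g_2$ in $L^p$ through the square function, producing $\lV g_1\rV_{L^\infty}\lV g_2\rV_{W^{N,p}}$; in (HL) and (HH) one puts the high factor of $g_1$ in $L^p$ (carrying its weight $2^{Nk_1}$) and the remaining factor of $g_2$ in $L^\infty$, producing $\lV g_1\rV_{W^{N,p}}\lV g_2\rV_{L^\infty}$; the low--frequency $L^p$ part of $g_1g_2$ is bounded directly by H\"older. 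Alternatively, when $N$ is a nonnegative integer---the only case needed in this paper---\eqref{Hardy_2} is immediate from the Leibniz rule $\partial^\alpha(g_1g_2)=\sum_{\beta\le\alpha}\binom{\alpha}{\beta}\partial^\beta g_1\,\partial^{\alpha-\beta}g_2$, H\"older, and Gagliardo--Nirenberg interpolation.

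The genuinely routine steps are routine; the one place that needs care is the bookkeeping in the (HL) interaction of \eqref{Hardy_1}: one must sum a geometric series in the frequency gap, make sure every $L^\infty$ piece of $g_1$ receives the weight $2^{Nk^+}$ and never a larger power, and sum the surviving $g_2$--frequencies in $\ell^2$ against $2^{2Nk}$ so that exactly $\lV g_2\rV_{\dot{H}^N}$ is reproduced rather than a larger Besov-type norm. In short, the whole proof reduces to choosing, in each of the three regimes, which factor goes into $L^\infty$ and which into $L^2$ (respectively $L^p$), after which \eqref{BernIneq} and H\"older close the estimates.
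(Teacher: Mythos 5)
Your route coincides with the paper's: the text offers no details beyond the sentence ``by standard Littlewood--Paley decomposition and H\"older'', and your paraproduct splitting into low--high, high--low and high--high interactions, estimated with \eqref{BernIneq} and H\"older, is the standard way to implement it. Your treatment of \eqref{Hardy_2} (Triebel--Lizorkin square function for $1<p<\infty$, or integer-$N$ Leibniz plus Gagliardo--Nirenberg) is correct, as are the (LH) and (HH) cases of \eqref{Hardy_1}.

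The step you yourself flag as delicate --- closing the high--low interaction of \eqref{Hardy_1} --- does not close for the homogeneous norm $\dot H^N$ as written. There the output frequency is $2^k\approx 2^{k_1}$ while $g_2$ sits at $k_2\le k_1-4$; after extracting $2^{Nk_1^+}\lV P_{k_1}g_1\rV_{L^{\infty}}$ you must convert $\lV P_{k_2}g_2\rV_{L^2}$ into $2^{Nk_2}\lV P_{k_2}g_2\rV_{L^2}\le \lV g_2\rV_{\dot H^N}$ at the price of a factor $2^{-Nk_2}$, and $\sum_{k_2\le k_1-4}2^{-Nk_2}$ diverges: the ``geometric series in the frequency gap'' runs the wrong way. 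This is not repairable bookkeeping, because \eqref{Hardy_1} is false for $\dot H^N$ when $N>3/2$: take $g_1$ a fixed bump at unit frequency and $g_2$ a bump at frequency $2^{-j}$ normalized so that $\lV g_2\rV_{\dot H^N}=1$; then $\lV g_2\rV_{L^{\infty}}\sim 2^{j(N-3/2)}$, the product $g_1g_2$ still lives at unit frequency, and $\lV g_1g_2\rV_{\dot H^N}\sim 2^{j(N-3/2)}\to\infty$ while the right-hand side stays bounded. The inequality that is true --- and the only form the paper ever invokes, since every application is in $H^{N(a)}$ --- carries the inhomogeneous norm $H^N$ on the left and on $g_2$; in that case the high--low interaction is immediate from $\lV P_{\le k_1-4}g_2\rV_{L^2}\lesssim \lV g_2\rV_{H^N}$ together with $2^{Nk^+}\lesssim 2^{Nk_1^+}$, and the remaining $\ell^2_k$ summation is absorbed by the $\ell^1$ sum $\sum_{k}2^{Nk^+}\lV P_kg_1\rV_{L^{\infty}}$. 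So keep your decomposition, but state and prove the estimate with $H^N$ in place of $\dot H^N$; as written, your high--low step would fail.
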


Finally, we need to record  the following weighted $L^{\infty}-L^2$ estimate
\begin{lemma}[Lemma 3.3, \cite{Si}]Let $f\in H^2(\R^3)$, $r=|x|$, then there holds
	\begin{equation}   \label{rv}
	\lV \<r\>f\rV_{\Lf}\lesssim \sum_{|\alpha|\leq 1}\lV\d_r\tilde{\Om}^{\alpha}f\rV_{L^2}^{1/2}\sum_{|\alpha|\leq 2}\lV\tilde{\Om}^{\alpha}f\rV_{L^2}^{1/2},
	\end{equation}
	provided the right hand side is finite.
\end{lemma}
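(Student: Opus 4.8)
By density it suffices to prove the bound for $f\in\mathcal S(\R^3)$, the hypothesis that the right‑hand side be finite guaranteeing that all boundary terms at infinity below vanish; since $f$ is scalar, $\tilde\Omega_i$ acts as the Euclidean rotation $\Omega_i=(x\wedge\nabla)_i$, which commutes with the radial derivative $\d_r=\tfrac{x}{r}\cdot\nabla$ and spans the tangent space of each sphere $\{|x|=\rho\}$. Fixing $x_0$ with $|x_0|=\rho_0$, the plan is to estimate $\langle\rho_0\rangle|f(x_0)|$ by combining Sobolev embedding on $\{|x|=\rho_0\}$ — to replace the pointwise value by an angular $L^2$ norm — with the fundamental theorem of calculus in the radial variable $\rho$ — to insert the weight $\langle\rho_0\rangle$ at the cost of one $\d_r$ and a one–dimensional Hardy inequality.

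Concretely: since $\mathbb S^2$ is two–dimensional and $\Delta_{\mathbb S^2}=-\sum_i\Omega_i^2$, the Gagliardo–Nirenberg inequality on $\mathbb S^2$ gives, for each $\rho$,
\[
\|f(\rho\,\cdot)\|_{L^\infty(\mathbb S^2)}^2\ \lesssim\ \|f(\rho\,\cdot)\|_{L^2(\mathbb S^2)}\ \sum_{|\alpha|\le 2}\|\Omega^\alpha f(\rho\,\cdot)\|_{L^2(\mathbb S^2)} .
\]
Writing $a(\rho):=\|f(\rho\,\cdot)\|_{L^2(\mathbb S^2)}^2$ and $b(\rho):=\big(\sum_{|\alpha|\le 2}\|\Omega^\alpha f(\rho\,\cdot)\|_{L^2(\mathbb S^2)}\big)^2$ this reads $|f(\rho\omega)|^2\lesssim a(\rho)^{1/2}b(\rho)^{1/2}$, hence $\langle\rho_0\rangle^2|f(x_0)|^2\lesssim\big(\langle\rho_0\rangle^4a(\rho_0)\big)^{1/2}b(\rho_0)^{1/2}$. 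One would then bound $\sup_{\rho_0}\langle\rho_0\rangle^4a(\rho_0)$ by $\langle\rho_0\rangle^4a(\rho_0)=-\int_{\rho_0}^\infty\d_\rho\big(\langle\rho\rangle^4a(\rho)\big)\,d\rho$, using $\d_\rho\|\Omega^\alpha f(\rho\,\cdot)\|_{L^2(\mathbb S^2)}^2=2\,\mathrm{Re}\!\int_{\mathbb S^2}\overline{\Omega^\alpha f}\,\d_r\Omega^\alpha f\,d\omega$ (and $\d_r\Omega^\alpha=\Omega^\alpha\d_r$) together with the radial Hardy inequality $\int_{\R^3}|g|^2|x|^{-2}\,dx\le 4\|\d_r g\|_{L^2}^2$ (proved by a one–dimensional integration by parts on each ray); Cauchy–Schwarz in $\rho$ then assembles a bound of the asserted product form $\sum_{|\alpha|\le1}\|\d_r\tilde\Omega^\alpha f\|_{L^2}^{1/2}\sum_{|\alpha|\le2}\|\tilde\Omega^\alpha f\|_{L^2}^{1/2}$ after absorbing lower–order contributions.

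The genuinely hard part is a derivative–counting tension in this last step: if one puts part of the weight on the high–angular–regularity factor $b$ and differentiates it in $\rho$, one produces $\d_r\Omega^2 f$, a third–order object that is neither on the right–hand side nor controlled by $f\in H^2(\R^3)$. The remedy — and where all the work sits — is to carry essentially the entire weight $\langle\rho_0\rangle^2$ on the low–regularity factor $a$ (where the FTC costs only $\d_r f$ and $\d_r\Omega f$, i.e.\ $|\alpha|\le 1$) and to control the high–regularity factor $b(\rho_0)$ at the single radius $\rho_0$ using only the a priori decay already encoded in $\sum_{|\alpha|\le2}\|\Omega^\alpha f\|_{L^2(\R^3)}<\infty$. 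Making this rigorous naturally calls for the combined physical–frequency decomposition $f=\sum f_{j,k}$ of \eqref{Decom_Pkf}: each $f_{j,k}$ has Fourier support in $\{|\xi|\sim 2^k\}$ and (up to rapidly decaying tails, since $j+k\ge0$ on $\mathcal J$) physical support in $\{|x|\sim 2^j\}$, so $\langle r\rangle\sim 2^j$ there and it suffices to control $\sum_{j,k}2^j\|f_{j,k}\|_{L^\infty}$; one estimates $\|f_{j,k}\|_{L^\infty}$ via Bernstein \eqref{BernIneq} for low frequencies and via \eqref{Dec_fjk} for high frequencies — its second alternative carrying precisely the $\Omega$–regularity and a $2^{j/2}$–gain that absorbs the weight $2^j$, with the radial field entering through the identity $\nabla=\omega\d_r+O(2^{-j})\,\Omega$ on $\{|x|\sim 2^j\}$ — and one closes the double sum by Cauchy–Schwarz together with Lemma \ref{AkBk_lem}, which turns the quantities $B_k$ into honest $L^2(\R^3)$ norms of $\d_r\tilde\Omega^{\le1}f$ and $\tilde\Omega^{\le2}f$. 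I would expect this bookkeeping — keeping every geometric series in $j$ and $k$ convergent while never exceeding two derivatives and never letting $\d_r$ appear more than once — to be the only delicate point; the individual ingredients are all standard.
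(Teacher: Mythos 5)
The paper offers no proof of this lemma (it is quoted from Sideris \cite{Si}), so I can only judge your sketch on its own terms, and it does not close. The step you yourself flag as ``the genuinely hard part'' is not resolved by your proposed remedy: the factor $b(\rho_0)=\big(\sum_{|\alpha|\le2}\lV\Om^\alpha f(\rho_0\cdot)\rV_{L^2(\mathbb{S}^2)}\big)^2$ is a surface integral at a \emph{single} radius, and finiteness of the solid integrals $\sum_{|\alpha|\le2}\lV\tilde\Om^\alpha f\rV_{L^2(\R^3)}^2=\int_0^\infty(\cdots)\,\rho^2d\rho$ cannot control it pointwise in $\rho_0$ (take $f$ supported in a shell of thickness $\epsilon$ about $|x|=\rho_0$; then $b(\rho_0)\sim\epsilon^{-1}\rho_0^{-2}\sum\lV\Om^\alpha f\rV_{L^2(\R^3)}^2$). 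Recovering pointwise-in-$\rho$ control from the radial average is a one-dimensional trace estimate and costs exactly one radial derivative, i.e.\ reintroduces $\d_r\Om^2f$, the term you are trying to avoid. Your alternative weight distribution fares no better: the fundamental theorem of calculus applied to $\langle\rho\rangle^4a(\rho)$ produces $\int s^4|f||\d_sf|\,ds\,d\omega=\int_{\R^3}|x|^2|f||\d_rf|\,dx\le\lV |x|f\rV_{L^2}\lV |x|\d_rf\rV_{L^2}$, weighted norms that are not on the right-hand side of \eqref{rv}, and Hardy's inequality removes negative powers of $|x|$, not positive ones. The Littlewood--Paley fallback is not a proof either: \eqref{Dec_fjk} bounds $\lV\widehat{f_{j,k}}\rV_{\Lf}$ in the \emph{frequency} variable, not $\lV f_{j,k}\rV_{\Lf}$, and the quantities $F_k$ of Lemma \ref{AkBk_lem} are built from $\d_\xi\widehat f$, i.e.\ from $xf$, so nothing in that route manufactures the specific product $\lV\d_r\tilde\Om^{\le1}f\rV^{1/2}\lV\tilde\Om^{\le2}f\rV^{1/2}$.

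There is also a structural point you should have caught before starting the bookkeeping: with the weight $\langle r\rangle$ and only $\d_r,\Om$ on the right, the inequality is \emph{false} for scalar $f$. Take $f_\epsilon(x)=g(x/\epsilon)$ with $g$ a fixed bump at the origin; then $\Om^\alpha f_\epsilon=(\Om^\alpha g)(\cdot/\epsilon)$ and $\d_r\Om^\alpha f_\epsilon=\epsilon^{-1}(\d_r\Om^\alpha g)(\cdot/\epsilon)$, so the right-hand side of \eqref{rv} is $O(\epsilon^{1/4}\cdot\epsilon^{3/4})=O(\epsilon)$ while the left-hand side is at least $|g(0)|$: near the origin these fields do not control two full derivatives, so no proof of the statement as printed can succeed. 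The version that is true (and the one the paper actually exploits, since \eqref{rv} is invoked to bound $\lV rP_ku\rV_{\Lf}$ and $\lV\FF^{-1}(\nab_\xi\widehat{P_ku})\rV_{\Lf}=\lV xP_ku\rV_{\Lf}$) carries the weight $r$, which vanishes at the origin. For that version the standard Sideris-type argument applies $W^{2,1}(\mathbb{S}^2)\hookrightarrow L^\infty(\mathbb{S}^2)$ to $|f(\rho\cdot)|^2$ so that, by the Leibniz rule, one factor carries at most one rotation; then runs the radial identity $\rho^2h(\rho)=-\int_\rho^\infty\d_s(s^2h)\,ds$ with Cauchy--Schwarz in $s^2\,ds\,d\theta$, Hardy's inequality $\lV|x|^{-1}g\rV_{L^2}\lesssim\lV\d_rg\rV_{L^2}$ for the term $2sh$, and an integration by parts on the sphere to move a rotation off whichever factor receives $\d_s$ --- that last manoeuvre is what keeps $\d_r$ away from $\Om^2f$, and it is entirely absent from your sketch.
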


\begin{lemma}
	For any $N\geq 0$ and Schwartz function $f\in\mathcal{S}(\R^3)$, we have
	\begin{equation}           \label{xidxif}
	\lV\FF^{-1}(|\xi|\nab_{\xi}\widehat{f})\rV_{H^N}\lesssim \lV x\cdot \nab_x f\rV_{H^N}+\lV \Om f\rV_{H^N}+\lV f\rV_{H^N}.
	\end{equation}
\end{lemma}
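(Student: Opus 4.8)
The plan is to reduce the bound to an $H^N$-estimate of $|\nabla|(x_k f)$ for $k=1,2,3$, and then to obtain that estimate by splitting the first-order operator $x_k\partial_j$ into its rotational part (a component of $\Omega=x\wedge\nabla$) and a scaling-type remainder, in such a way that one never pulls a bare factor of $x$ outside $|\nabla|$.

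First I would record the elementary reductions. Since $\partial_{\xi_k}\widehat f(\xi)=-i\,\widehat{x_k f}(\xi)$ and $|\xi|$ is the symbol of $|\nabla|$, one has $\FF^{-1}\big(|\xi|\,\partial_{\xi_k}\widehat f\big)=-i\,|\nabla|(x_k f)$, hence $\lV\FF^{-1}(|\xi|\nabla_\xi\widehat f)\rV_{H^N}\lesssim\sum_{k=1}^{3}\lV\,|\nabla|(x_k f)\rV_{H^N}$, and it suffices to bound each summand. Next I would bring in the Riesz transforms $R_j=\partial_j|\nabla|^{-1}$: they are bounded on $L^2$ and commute with every $\partial^\alpha$, hence are bounded on $H^N$; moreover $|\nabla|=\sum_j R_j\partial_j$ and $R_j\partial_k=R_k\partial_j$, the last two facts being visible from the symbols ($\xi_j^2/|\xi|$ summed, and $\xi_j\xi_k/|\xi|$, respectively). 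Since $f\in\mathcal{S}(\R^3)$, all these operators act without any convergence issue, and $x_k f$ is again Schwartz.

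Then for fixed $k$ I would compute
\[
|\nabla|(x_k f)=\sum_{j}R_j\partial_j(x_k f)=R_k f+\sum_{j}R_j(x_k\partial_j f),
\]
and decompose $x_k\partial_j f=(x_k\partial_j-x_j\partial_k)f+x_j\partial_k f$. For each pair $(j,k)$ the antisymmetric operator $x_k\partial_j-x_j\partial_k$ equals, up to a sign, a single component of $\Omega=x\wedge\nabla$ (and vanishes when $j=k$), so by $H^N$-boundedness of $R_j$ the corresponding piece is $\lesssim\lV\Omega f\rV_{H^N}$. For the remaining term I would use $x_j\partial_k f=\partial_k(x_j f)-\delta_{jk}f$ together with $R_j\partial_k=R_k\partial_j$ to move all derivatives onto $x_j f$:
\[
\sum_{j}R_j(x_j\partial_k f)=\sum_j R_k\partial_j(x_j f)-R_k f=R_k\Big(\sum_j\partial_j(x_j f)\Big)-R_k f=R_k\big(3f+x\cdot\nabla f\big)-R_k f,
\]
whose $H^N$ norm is $\lesssim\lV f\rV_{H^N}+\lV x\cdot\nabla f\rV_{H^N}$. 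Collecting the three contributions gives $\lV|\nabla|(x_k f)\rV_{H^N}\lesssim\lV f\rV_{H^N}+\lV\Omega f\rV_{H^N}+\lV x\cdot\nabla f\rV_{H^N}$, and summing over $k=1,2,3$ yields \eqref{xidxif}.

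The point to watch — the only place a little thought is needed — is that the naive move, $|\nabla|(x_k f)=x_k|\nabla|f+[|\nabla|,x_k]f=x_k|\nabla|f+R_k f$, is useless, since $x_k|\nabla|f$ is no more tractable than the quantity we started from; one must instead keep $x_j$ inside the Riesz transform and recognize $x_k\partial_j$ as (rotation) $+$ (scaling-type term), both of which are controlled by the admissible vector fields $\Omega$ and $x\cdot\nabla$ appearing on the right-hand side. Everything else is routine: $L^2$- (hence $H^N$-) boundedness of the Riesz transforms, the Leibniz rule, and the symbol identity $R_j\partial_k=R_k\partial_j$.
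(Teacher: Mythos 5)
Your proof is correct and is essentially the paper's argument in expanded form: the paper simply cites the identity $|\xi|\nabla_{\xi}=\tfrac{\xi}{|\xi|}(\xi\cdot\nabla_{\xi})-\tfrac{\xi}{|\xi|}\wedge\Omega(\xi)$, which is exactly the Fourier-side version of your decomposition of $x_k\partial_j$ into a rotational piece and a scaling piece with Riesz-transform coefficients. Your version just carries out the computation on the physical side and makes the $H^N$-boundedness of the Riesz multipliers explicit.
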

\begin{proof}
	This bound is obtained by the relation
	\begin{equation*}
	|\xi|\nab_{\xi}=\frac{\xi}{|\xi|}(\xi\cdot\nab_{\xi})-\frac{\xi}{|\xi|}\wedge \Om(\xi),
	\end{equation*}
	where $\Om(\xi)=\xi\wedge\nab_{\xi}$.
\end{proof}

\subsection{The main bootstrap proposition}
By standard argument, after applying vector fields $Z$ to the system (\ref{Main_Sys}) and make a change of unknown $$v^{(a)}:=\U u^{(a)},$$ 
we can derive that
\begin{equation}         \label{Main_Sys_VecFie}
\left\{\begin{aligned}
\d_t v^{(a)}+L v^{(a)}+\U\P L_1(u)=&-\U\P\sum_{b+c=a}C_a^b[ u^{(b)}\cdot\nab u^{(c)}+ \d_j(\nab(S-1)^{b_1}\Gamma^{b'}\phi\cdot\d_j(S-1)^{c_1}\Gamma^{c'}\phi)]\\
&-\U\P(S+1)^{a_1}\Gamma^{a'}\d_j(\phi\otimes\nab u)+(S+1)^{a_1}\Gamma^{a'}( Err11+Err12)),\\
\div u^{(a)}=&0,\\
\d_t^2\phi^{(a)}-\Delta\phi^{(a)}=&-\sum_{b+c=a}C_a^b(\d_t u^{(b)}\cdot\nab\phi^{(c)}+2u^{(b)}\cdot\nab\d_t\phi^{(c)})\\
&-\sum_{b+c+e=a}C_a^{b,c}u^{(b)}\cdot\nab(u^{(c)}\cdot\nab\phi^{(e)})+(S+2)^{(a_1)}\Gamma^{(a')}Err2,
\end{aligned}
\right.
\end{equation}
where $L_1(u)$ is the lower order terms from the commutation between $S,\tilde{\Om}$ and $\d$,
\begin{equation*}
L_1(u):=-\sum_{0\leq i\leq a_1-1}\frac{\nu_4}{2} C_{a_1}^i (-1)^{a_1-i} \Delta S^i\Gamma^{a'}u- (L_{1,c_1}u^{(c_1)},L_{2,c_2}u^{(c_2)},L_{3,c_3}u^{(c_3)})^{\top},
\end{equation*}
and where $L_{j,c_j}$ for $1\leq j\leq 3$ are second order differential operator,
\begin{equation*}
C_a^b:=\frac{a!}{b!(a-b)!},\ C_a^{b,c}:=\frac{a!}{b!c!(a-b-c)!}.
\end{equation*}

To state the main proposition we review the normalized solution $\Phi$ and it's profile $\Psi$, i.e for $|a|\leq N_1$
\begin{equation}       \label{Psi-Phi-relat}
\Phi^{(a)}:=(\d_t+i|\nab|) \phi^{(a)},\ \ \Psi^{(a)}:=e^{-it|\nab|}\Phi^{(a)}.
\end{equation}
The function $\phi$ can be recovered from the normalized variable $\Phi$ by the formulas
\begin{equation*}
\phi=\tfrac{1}{2i}|\nab|^{-1}(\Phi-\overline{\Phi}).
\end{equation*}

Our main result is the following proposition:
\begin{proposition}   \label{Main_Prop}
    Assume that $(v,\phi)$ is a solution to (\ref{Main_Sys-v}) on some time interval $[0,T]$, $T\geq 1$ with initial data satisfying the assumptions (\ref{MainAss_ini}). Assume also that the solution satisfies the bootstrap hypothesis
    \begin{gather}           \label{Main_Prop_Ass1}
        \sup\limits_{|a|\leq N_1,t\in[0,T]} \{\lV  v^{(a)}\rV_{H^{N(a)}}+\lV \nab v^{(a)}\rV_{L^2([0,t]:H^{N(a)})} +\<t\>^{-\ka(a)\delta}\lV \Phi^{(a)}\rV_{H^{N(a)}}\}\leq \epsilon_1,   \\\label{Main_Prop_Ass2}
        \sup_{|a|\leq N_1-1,t\in[0,T]}\<t\>^{-\ka(|a|+1)\delta}\lV\FF^{-1}(|\xi|\d_{\xi}\widehat{\Psi^{(a)}})\rV_{H^{N(|a|+1)}} \leq \ep_1,
    \end{gather}
    where $\ep_1=\ep_0^{2/3}$, $\delta=10^{-10}$,
    \begin{gather*}
    \ka(0)=0;\ \ka(a)=1,\ \mathrm{for}\ 1\leq |a|\leq N_1-2;\ \ka(a)=2,\ \mathrm{for}\ N_1-1\leq|a|\leq N_1.\\
    N(a)=N_0-|a|h,\ N_0=60,\ h=6.
    \end{gather*}
    Then the following improved bounds hold
    \begin{gather}\label{Main_Prop_result1}
        \sup\limits_{|a|\leq N_1,t\in[0,T]} \{\lV  v^{(a)}\rV_{H^{N(a)}}+\lV \nab v^{(a)}\rV_{L^2([0,t]:H^{N(a)})} +\<t\>^{-\ka(a)\delta}\lV \Phi^{(a)}\rV_{H^{N(a)}}\}\lesssim \epsilon_0,   \\\label{Main_Prop_result2}
        \sup_{|a|\leq N_1-1,t\in[0,T]}\<t\>^{-\ka(|a|+1)\delta}\lV\FF^{-1}(|\xi|\d_{\xi}\widehat{\Psi^{(a)}})\rV_{H^{N(|a|+1)}}  \lesssim \ep_0.
    \end{gather}
\end{proposition}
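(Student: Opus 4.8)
The plan is to run a standard bootstrap/continuity argument: assume the hypotheses \eqref{Main_Prop_Ass1}--\eqref{Main_Prop_Ass2} on $[0,T]$ with constant $\ep_1 = \ep_0^{2/3}$, and close the estimates with the improved constant $\lesssim \ep_0$. Since $\ep_0 \ll \ep_1$, this suffices to continue the solution globally by the local theory. The argument splits into three packages, carried out in the following order. First I would derive decay estimates: using the linear decay bound \eqref{Dec_heat} for $e^{-tL}$ together with Duhamel's formula for the $v$-equation and the bootstrap assumptions, one obtains $L^\infty$ and $L^q$ decay for $v = \U u$ (and hence $u$); the quadratic term $\nab\otimes(\phi\otimes\nab u)$ from $\div\sigma$ forces one to feed in the $L^\infty$ decay of $\phi$ and the $L^2$ bound on $\nab^2 u$ coming from the parabolic smoothing of $v$, as flagged in the ``Main ideas.'' In parallel, from the wave structure of $\phi$ one gets the dispersive decay of $\nab_{t,x}\phi$ in $L^\infty$ via the weighted bound \eqref{rv}, the Hardy-type bound \eqref{xidxif}, and the assumed control of $\FF^{-1}(|\xi|\d_\xi\widehat{\Psi^{(a)}})$ (a low-regularity, higher-vector-field quantity).

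Second, I would prove the improved energy estimate \eqref{Main_Prop_result1}. For each $|a| \le N_1$, apply the energy method to the $v^{(a)}$-equation \eqref{Main_Sys_VecFie}: the coefficient condition \eqref{mu1mu2} makes $L$ (hence $-L$) elliptic, so one gets $\frac{d}{dt}\|v^{(a)}\|_{H^{N(a)}}^2 + c\|\nab v^{(a)}\|_{H^{N(a)}}^2 \le (\text{nonlinear terms})$, and the parabolic dissipation absorbs the top-order $\nab\otimes(\phi\otimes\nab u)$ contribution; the commutator terms $L_1(u)$ and the error terms $Err1\ast$ are lower order and handled by \eqref{Hardy_1}--\eqref{Hardy_2} and the decay obtained in step one. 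For $\phi^{(a)}$ one uses the standard wave-equation energy $\|\nab_{t,x}\phi^{(a)}\|_{H^{N(a)}}$; here $N(a)$ decreasing in $|a|$ is essential, since each vector field costs $h=6$ derivatives, and the right-hand side quadratics lose one derivative each but are paired against a factor that decays in $t$, producing at worst the growth $\<t\>^{\ka(a)\delta}$.

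Third, and this is where the real work lies, I would establish the weighted $L^2$ bound \eqref{Main_Prop_result2}, i.e. control of $\|\FF^{-1}(|\xi|\d_\xi\widehat{\Psi^{(a)}})\|_{H^{N(|a|+1)}}$. Via \eqref{xidxif} this reduces to bounding $\|S\Psi^{(a)}\|$, $\|\Om\Psi^{(a)}\|$, $\|\Psi^{(a)}\|$ and $\|t\,\d_t\Psi^{(a)}\|$ in $H^{N(|a|+1)}$; the first three are energy-type and follow from step two applied to the vector-field-differentiated system, so the crux is $\|\d_t\Psi^{(a)}\|_{H^N} \lesssim \ep_0 t^{-1+\ka(a)\delta}$. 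Writing Duhamel in Fourier space for $\widehat{\Psi^{(a)}}$, the oscillatory phase is $|\xi| - \nu|\eta|$, whose space-resonant set $\SS_\pm$ is empty; thus one integrates by parts in $\eta$ (the $\nab_\eta$ hits $e^{-is(|\xi|-\nu|\eta|)}$ producing a factor $\sim 1/s$) and estimates the resulting bilinear forms using Lemma \ref{Sym_Pre_lemma} and Lemma \ref{ContSymb} for the symbol bounds, together with the decay of $u$ from step one and the energy bounds. The genuinely delicate term, already identified in the introduction, is the quadratic $u\cdot\nab\d_t\phi$ arising from $(\d_t + u\cdot\nab)^2\phi$: since neither $u$ nor $\nab\phi$ decays fast enough on its own, one must exploit the null-type structure and frequency localization — treating high-low, high-high, and low-high interactions in \eqref{HardTerm} separately, integrating by parts in time on the first two (using the decay of $\d_s u^{(b)}$ and $\d_s\Psi^{(c)}$) and in $\eta$ on the last (using $\|\nab u^{(b)}\|_{L^2}$ and $\Phi^{(c)}$). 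I expect this resonance-driven bilinear analysis of the material-derivative term, and the bookkeeping of derivative losses against the $\<t\>^{\ka(a)\delta}$ budget across all multi-indices $a$, to be the main obstacle; everything else is either classical parabolic energy estimates or routine application of the lemmas collected in Section 2.
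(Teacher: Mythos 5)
Your plan matches the paper's proof essentially step for step: Section 3 of the paper derives exactly the decay estimates you describe (Duhamel for $e^{-tL}$, closing the $v$-decay by feeding in the $L^\infty$ decay of $\phi$ and the $L^2$ bound on $\nabla^2 u$, plus the dispersive/weighted bounds for $\Phi$), Section 4 gives the energy estimates, and Section 5 gives the weighted profile bound via \eqref{xidxif}, the reduction to $\lVert t\,\d_t\Psi^{(a)}\rVert_{H^N}$, and integration by parts in $\eta$. The only organizational difference is that the space-time resonance analysis of \eqref{HardTerm} --- the high-low/low-high/high-high split with integration by parts in time on the non-space-resonant pieces --- is carried out in the paper inside the energy estimate for $\Phi^{(a)}$ (Proposition \ref{Prop_Phia}, hence as part of \eqref{Main_Prop_result1}) rather than in the profile bound, where only the $\eta$-integration by parts is used; you have nonetheless identified all the required techniques and the correct hard term.
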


The bounds (\ref{Main_Prop_Ass2}) and (\ref{Main_Prop_result2}) are our main $L^2$ bounds on the derivatives of the profile $\Psi^{(a)}$ in the Fourier space. They correspond to weighted bounds in the physical space which plays an important role in the energy estimates.

From the assumption (\ref{Main_Prop_Ass2}) and Lemma $\ref{AkBk_lem}$, we give the following useful bound.
\begin{corollary}
	With the notation and hypothesis in Proposition \ref{Main_Prop}, we have for any $|a|\leq N_1-1$,
	\begin{equation}       \label{Qjk_Psi}
	\Big(\sum_{(k,j)\in\mathcal{J}}2^{2N(|a|+1)k^++2k}2^{2j} \lV Q_{jk}\Psi^{(a)}\rV_{L^2}^2\Big)^{1/2} \lesssim \ep_1\<t\>^{\ka(|a|+1)\delta}.
	\end{equation}
\end{corollary}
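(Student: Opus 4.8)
The plan is to apply Lemma \ref{AkBk_lem} to the function $f = \Psi^{(a)}$, but with a frequency weight incorporated, and then sum over all dyadic frequency scales $k$. First, I would observe that the left-hand side of \eqref{Qjk_Psi} is exactly a weighted $\ell^2$-sum over $k$ of the quantities $B_k(\Psi^{(a)})^2$ from Lemma \ref{AkBk_lem}, with weight $2^{2N(|a|+1)k^+ + 2k}$. So the task reduces to bounding
\begin{equation*}
\Big(\sum_{k\in\Z} 2^{2N(|a|+1)k^+ + 2k} B_k(\Psi^{(a)})^2\Big)^{1/2} \lesssim \ep_1\<t\>^{\ka(|a|+1)\delta}.
\end{equation*}
I would split the sum at $k=0$. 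For $k \geq 0$, Lemma \ref{AkBk_lem} gives $B_k \lesssim \sum_{|k'-k|\leq 4} F_{k'}$, so after using the near-diagonal nature of this bound and the fact that $2^{2N(|a|+1)k^+ + 2k}$ is comparable across $|k'-k|\leq 4$, this piece is controlled by $\sum_{k'\geq -4} 2^{2N(|a|+1)(k')^+ + 2k'} F_{k'}(\Psi^{(a)})^2$. For $k\leq 0$, Lemma \ref{AkBk_lem} gives $B_k \lesssim \sum_{k'\in\Z} F_{k'} 2^{-|k-k'|/2}\min\{1, 2^{k'-k}\}$; squaring, using Cauchy--Schwarz (or Schur's test) against the rapidly-decaying kernel $2^{-|k-k'|/2}\min\{1,2^{k'-k}\}$, and then carrying out the $\sum_{k\leq 0} 2^{2k}(\cdots)$ sum — note the extra $2^{2k}$ factor converges the low-frequency end — reduces this piece also to $\sum_{k'\in\Z} 2^{2N(|a|+1)(k')^+ + 2k'} F_{k'}(\Psi^{(a)})^2$ up to harmless geometric-series constants.

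Thus everything reduces to the claim
\begin{equation*}
\Big(\sum_{k\in\Z} 2^{2N(|a|+1)k^+ + 2k} F_k(\Psi^{(a)})^2\Big)^{1/2} \lesssim \ep_1\<t\>^{\ka(|a|+1)\delta},
\end{equation*}
where $F_k(\Psi^{(a)}) = \lV P_k\Psi^{(a)}\rV_{L^2} + \sum_{l=1}^3 \lV \varphi_k(\xi)\d_{\xi_l}\widehat{\Psi^{(a)}}(\xi)\rV_{L^2}$. The first term contributes $\sum_k 2^{2N(|a|+1)k^+ + 2k}\lV P_k\Psi^{(a)}\rV_{L^2}^2$, which is $\lesssim \lV |\nab|\Psi^{(a)}\rV_{H^{N(|a|+1)}}^2 \lesssim \lV \Phi^{(a)}\rV_{H^{N(|a|+1)+1}}^2$ (since $|\nab|$ commutes with $e^{-it|\nab|}$), and this is $\lesssim \ep_1^2\<t\>^{2\ka(|a|+1)\delta}$ by \eqref{Main_Prop_Ass1} because $N(|a|+1)+1 \leq N(|a|)$ and $\ka(|a|+1)\geq \ka(|a|)$. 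For the weighted-derivative term, I would note $\sum_k 2^{2N(|a|+1)k^+ + 2k}\sum_l \lV \varphi_k(\xi)\d_{\xi_l}\widehat{\Psi^{(a)}}\rV_{L^2}^2$ is comparable to $\lV \FF^{-1}(|\xi|\d_\xi\widehat{\Psi^{(a)}})\rV_{H^{N(|a|+1)}}^2$, since multiplying $\d_{\xi_l}\widehat{\Psi^{(a)}}$ by $|\xi|$ contributes a factor $\sim 2^k$ on the $P_k$ piece, matching the $2^{2k}$ weight, and the $2^{2N(|a|+1)k^+}$ weight is the $H^{N(|a|+1)}$ weight; this is exactly $\lesssim \ep_1^2\<t\>^{2\ka(|a|+1)\delta}$ by the bootstrap hypothesis \eqref{Main_Prop_Ass2}.

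Combining the two pieces and taking square roots yields \eqref{Qjk_Psi}. The only mildly delicate point — and the step I would be most careful with — is the low-frequency ($k\leq 0$) regime in the first reduction: one must check that the kernel $2^{-|k-k'|/2}\min\{1,2^{k'-k}\}$ together with the weight $2^{2k}$ genuinely produces a bounded Schur-type operator on the relevant weighted sequence space, i.e. that there is no logarithmic loss and that the weight $2^{2k}$ (rather than, say, no weight at all) is what makes the low-frequency sum $\sum_{k\leq 0}$ converge; this is precisely why the factor $2^{2k}$ — equivalently the $|\nab|$ gain built into working with $\Phi = (\d_t + i|\nab|)\phi$ rather than $\phi$ itself — appears in the statement. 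Everything else is routine Littlewood--Paley bookkeeping.
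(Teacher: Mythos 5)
Your proposal is correct and follows essentially the same route as the paper: split the sum at $k=0$, apply Lemma \ref{AkBk_lem} in each regime (with the explicit Schur-type summation of the kernel $2^{-|k-k'|/2}\min\{1,2^{k'-k}\}$ against the $2^{2k}$ weight for $k\le 0$), and reduce to $\lV\Psi^{(a)}\rV_{H^{N(|a|+1)+1}}\lesssim\lV\Phi^{(a)}\rV_{H^{N(a)}}$ together with the bootstrap bound \eqref{Main_Prop_Ass2} on $\lV\FF^{-1}(|\xi|\d_\xi\widehat{\Psi^{(a)}})\rV_{H^{N(|a|+1)}}$. Your closing remark about the role of the $2^{2k}$ weight in the low-frequency Schur bound is also the right point to be careful about, and it checks out.
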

\begin{proof}
	The corollary is an easy consequence of Lemma $\ref{AkBk_lem}$ and (\ref{Main_Prop_Ass2}). Indeed, if $k\geq 0$,
	\begin{align*}
	\Big(\sum_{(k,j)\in\mathcal{J};j\geq 0,k\geq 0}2^{2N(|a|+1)k^++2k}2^{2j} \lV Q_{jk}\Psi^{(a)}\rV_{L^2}^2\Big)^{1/2}  \lesssim & \lV 2^{N(|a|+1)k^++k}\sum_{|k'-k|\leq 4}F_{k'}(\Psi^{(a)})\rV_{l^2_k}\\
	\lesssim & \lV \Psi^{(a)}\rV_{H^{N(|a|+1)+1}}+\lV \FF^{-1}(|\xi|\d_{\xi}\widehat{\Psi^{(a)}})\rV_{H^{N(|a|+1)}}\\
	\lesssim & \ep_1\<t\>^{\ka(|a|+1)\delta}.
	\end{align*}
	If $k<0$, we have
	\begin{align*}
	&\Big(\sum_{(k,j)\in\mathcal{J};j\geq -k,k<0}2^{2N(|a|+1)k^++2k}2^{2j} \lV Q_{jk}\Psi^{(a)}\rV_{L^2}^2\Big)^{1/2} \\
	\lesssim &\lV 2^k \sum_{k'}F_{k'}(\Psi^{(a)})2^{-|k-k'|/2}\min(1,2^{k'-k})\rV_{l^2_k}\\
	\lesssim &\lV \sum_{k'>k} 2^{k'}F_{k'}(\Psi^{(a)})2^{3(k-k')/2}+\sum_{k'\leq k}2^{k'}F_{k'}(\Psi^{(a)})2^{(k'-k)/2}\rV_{l^2_k}\\
	\lesssim & \lV 2^k F_k(\Psi^{(a)}) \rV_{l^2_k}\\
	\lesssim & \ep_1\<t\>^{\ka(|a|+1)\delta}.
	\end{align*}
	This completes the proof of the Corollary.
\end{proof}

Once Proposition $\ref{Main_Prop}$ is proved, Theorem $\ref{Main_thm}$ follows directly from the standard continuity argument. The rest of this paper focuses on the proof of Proposition $\ref{Main_Prop}$. The key ingredients include Proposition $\ref{Prop_Ene_Sob}-\ref{Prop_Phia}$ and Proposition $\ref{Prop_Psia}$.

\section{Decay of velocity field and orientation field}
In this section, we give the various decay estimates of $v$ and $\Phi$, which will be useful in the energy estimates in the next sections. 

\subsection{Decay of $\Phi$.} In order for the decay estimates of $\Phi$, the following frequency localized linear dispersive estimate is necessary.

\begin{lemma}[Frequency localized linear decay estimate] For any $k\in \Z$ and Schwartz function $f\in\mathcal{S}(\R^3)$, we have
	\begin{equation}    \label{Dis}
	\lV e^{it|\nab|} P_k f\rV_{L^{\infty}}\lesssim \lV \widehat{P_kf}\rV_{L^{\infty}}(t^{-1}2^{2k}+t^{-1+\delta}2^{2k+\delta k})+\lV \nab_{\xi}\widehat{P_kf}\rV_{L^2}(t^{-1}2^{3k/2}+t^{-1+\delta}2^{3k/2+\delta k}).
	\end{equation}
\end{lemma}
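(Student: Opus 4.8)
The plan is to prove the frequency localized decay estimate \eqref{Dis} by the standard stationary/non-stationary phase analysis adapted to the half-wave propagator $e^{it|\nabla|}$, combining a dyadic decomposition in the \emph{spatial} variable of the output with the frequency localization already present. Writing
\[
e^{it|\nabla|}P_kf(x)=\int_{\R^3}e^{i(x\cdot\xi+t|\xi|)}\varphi_k(\xi)\widehat f(\xi)\,d\xi,
\]
the phase is $\Phi_{t,x}(\xi)=x\cdot\xi+t|\xi|$, with $\nabla_\xi\Phi_{t,x}=x+t\xi/|\xi|$ and $|\nabla_\xi^2\Phi_{t,x}|\sim t/|\xi|\sim t2^{-k}$ on the support of $\varphi_k$. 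First I would split the integral according to whether the $x$-space point forces the phase to be stationary or not, i.e.\ introduce the decomposition $\widehat{P_kf}=\sum_{j\ge\max(-k,0)}\widehat{(P_kf)_{j,k}}$ from \eqref{Decom_Pkf} (or equivalently a dyadic partition $|x+t\xi/|\xi||\sim 2^m$), and estimate each piece; the key input is the pointwise bound \eqref{Dec_fjk} on $\lV\widehat{(P_kf)_{j,k}}\rV_{L^\infty}$ in terms of $\lV Q_{jk}f\rV_{L^2}$ and $\sum_{|\alpha|\le1}\lV Q_{jk}\Om^\alpha f\rV_{L^2}$.

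The core of the argument is the one-dimensional-in-time dispersive bound with an $L^\infty$-in-frequency coefficient. On the region where $|\nabla_\xi\Phi_{t,x}|$ is small one uses the non-degeneracy of the Hessian: the stationary phase / $TT^*$ estimate for $e^{it|\nabla|}P_k$ on a unit-scale frequency annulus gives a factor $t^{-1}$ (from the two non-degenerate directions tangent to the sphere; the radial direction contributes no decay but is compact after localization), times the appropriate powers of $2^k$ coming from rescaling $\xi\mapsto 2^k\xi$: a volume factor $2^{3k}$ from $d\xi$ against the $L^\infty_\xi$ norm, reduced to $2^{2k}$ after exploiting the $t^{-1}$ gain that costs one power of $2^{-k}$. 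On the non-stationary region $|x+t\xi/|\xi||\gtrsim 2^m$ with $2^m\gg t2^{-k}$, repeated integration by parts in $\xi$ produces arbitrarily large negative powers of $2^m/(t2^{-k})$, and after summing in $m$ one is left with the stated main term. The two `$t^{-1+\delta}$' terms with the extra $2^{\delta k}$ arise precisely from the borderline dyadic pieces — the low-spatial-frequency end $j\sim -k$ (where one trades $\lV\widehat{f_{j,k}}\rV_{L^\infty}\lesssim 2^{j/2-k}2^{\beta(j+k)}\sum\lV Q_{jk}\Om^\alpha f\rV_{L^2}$, accepting the slightly-larger-than-$2^0$ factor $2^{\beta(j+k)}$, which summed against the geometric series gives a loss $2^{\delta k}$ and a $t^{-\delta}$) and symmetrically from a cutoff separating $|x|\le t^{1+\delta}2^{-k}$ from $|x|>t^{1+\delta}2^{-k}$; on the latter set integration by parts is lossless, and on the former set one simply uses Cauchy--Schwarz with the volume of the spatial ball, producing $\lV\nabla_\xi\widehat{P_kf}\rV_{L^2}$ times $t^{-1+\delta}2^{3k/2+\delta k}$ (and its $\delta=0$ counterpart $t^{-1}2^{3k/2}$).

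Concretely, the steps in order: (1) rescale to a unit frequency annulus and reduce to proving $\lV e^{it|\nabla|}P_0 g\rV_{L^\infty}\lesssim t^{-1}\lV\widehat g\rV_{L^\infty}+t^{-1}\lV\nabla_\xi\widehat g\rV_{L^2}$ plus the $\delta$-corrected refinements, then undo the scaling to recover the $2^k$ powers; (2) perform the spatial/stationary-phase dyadic decomposition and dispatch the non-stationary pieces by integration by parts in $\xi$, summing the resulting geometric series; (3) on the stationary piece, apply the van der Corput / Hessian bound $|\nabla_\xi^2\Phi|\gtrsim t$ to get the $t^{-1}$ decay paired with the $L^\infty_\xi$ norm, and on the complementary large-$|x|$ region integrate by parts to bound by $\lV\nabla_\xi\widehat g\rV_{L^2}$ times the dispersive rate, using Cauchy--Schwarz over the ball $\{|x|\lesssim t^{1+\delta}\}$; (4) absorb the borderline contributions via \eqref{Dec_fjk} to produce the $2^{\delta k}$, $t^{-1+\delta}$ terms, and sum over the dyadic pieces with the Cauchy--Schwarz in the $j$-index making the $\beta$-loss harmless. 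I expect the main obstacle to be the bookkeeping that keeps the coefficients exactly in the mixed form $\lV\widehat{P_kf}\rV_{L^\infty}$ for the `good' part and $\lV\nabla_\xi\widehat{P_kf}\rV_{L^2}$ for the `bad' (large-$|x|$, small-spatial-frequency) part — i.e.\ matching the split of $x$-space regions to the split of function-space norms so that no single term needs both a volume factor and a large power of $2^k$ — together with verifying that the $\beta=1/1000$ loss in \eqref{Dec_fjk} can indeed be swallowed into the tolerated exponent $\delta$.
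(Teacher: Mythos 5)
The paper does not actually prove this lemma: its entire ``proof'' is the one-line citation to Lemma 4.1 of \cite{Wxc}. The argument behind that citation is precisely the stationary-phase analysis in your steps (1)--(3): write $e^{it|\nab|}P_kf(x)=\int e^{i(x\cdot\xi+t|\xi|)}\varphi_k(\xi)\widehat f(\xi)\,d\xi$, isolate the angular cap around the critical direction $\xi/|\xi|=-x/|x|$ of angular radius $\sim(2^kt)^{-1/2}$ and bound its contribution by its measure times $\lV\widehat{P_kf}\rV_{L^\infty}$ (which gives $(2^kt)^{-1}2^{3k}=t^{-1}2^{2k}$), then integrate by parts once off the cap, estimating the term where the derivative falls on $\widehat f$ by Cauchy--Schwarz over the annulus (volume$^{1/2}=2^{3k/2}$); the $t^{-1+\delta}2^{\delta k}$ corrections absorb the logarithmic/borderline losses from summing the dyadic scales of $|x+t\xi/|\xi||$ and the region $||x|-t|\ll t$. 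So your core approach coincides with the intended one.

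One ingredient of your plan is misplaced, however. The decomposition \eqref{Decom_Pkf} and the bound \eqref{Dec_fjk} have no role in the proof of \eqref{Dis}: they control $\lV\widehat{f_{j,k}}\rV_{L^\infty}$ by $\lV Q_{jk}f\rV_{L^2}$ and $\lV Q_{jk}\Om^{\alpha}f\rV_{L^2}$, which are not the norms on the right-hand side of \eqref{Dis}, and summing the resulting pieces over $j$ would not reassemble $\lV\widehat{P_kf}\rV_{L^\infty}$. That machinery belongs to the \emph{application} of the lemma in Lemma \ref{Phi_inf}, where \eqref{Dis} is applied to $\Psi^{(a)}$ and its right-hand side is then converted into the $Q_{jk}$ norms controlled by \eqref{Qjk_Psi}; in particular the $\beta=1/1000$ loss never enters the proof of \eqref{Dis} itself. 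Your parenthetical alternative --- partitioning the frequency integral according to $|x+t\xi/|\xi||\sim 2^m$ --- is the correct decomposition (and it is not equivalent to the $f_{j,k}$ decomposition, which localizes $f$ in physical space rather than the phase gradient); run the proof with that one only and your outline goes through.
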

\begin{proof}
	By the similar argument to Lemma 4.1 in \cite{Wxc}, the bound (\ref{Dis}) follows.
\end{proof}

Next, we use the dispersive estimates to give the decay of $\Phi$.
\begin{lemma}                \label{Phi_inf}
	With the notations and hypothesis in Proposition \ref{Main_Prop}, for any $k\in\Z$, $|a|\leq N_1-2$ and $t\in[0,T]$ we have
	\begin{gather}     \label{Dec_Phi}
	\lV P_k\Phi^{(a)}(t)\rV_{L^{\infty}} \lesssim \ep_1 \<t\>^{-1+\delta+\ka(|a|+2)\delta} 2^{k^-/2}2^{-N(|a|+1)k^+ +(h/2+2)k^+},\\ \label{Dec_Phi/nab}
	\lV|\nab|^{-1}\Phi^{(a)}(t)\rV_{L^{\infty}}\lesssim \ep_1\<t\>^{-1/2+2\delta}.
	\end{gather}
\end{lemma}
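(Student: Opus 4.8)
\textbf{Proof proposal for Lemma \ref{Phi_inf}.}
The plan is to derive both bounds from the frequency-localized dispersive estimate \eqref{Dis} applied to the profile $\Psi^{(a)}$, combined with the two types of bootstrap control we have on $\Psi^{(a)}$: the $L^2$-energy bound coming from \eqref{Main_Prop_Ass1} (which controls $\lV\Phi^{(a)}\rV_{H^{N(a)}}$, hence $\lV\widehat{\Psi^{(a)}}\rV_{L^\infty}$ via Bernstein after a frequency localization) and the weighted bound \eqref{Main_Prop_Ass2} (which controls $\lV\nab_\xi\widehat{\Psi^{(a)}}\rV_{H^{N(|a|+1)}}$, after splitting $\nab_\xi\widehat{P_k\Psi^{(a)}}=P_k\nab_\xi\widehat{\Psi^{(a)}}+(\nab_\xi\varphi_k)\widehat{\Psi^{(a)}}$). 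Since $\Phi^{(a)}=e^{it|\nab|}\Psi^{(a)}$, we have $P_k\Phi^{(a)}=e^{it|\nab|}P_k\Psi^{(a)}$, so \eqref{Dis} gives
\[
\lV P_k\Phi^{(a)}\rV_{L^\infty}\lesssim \lV\widehat{P_k\Psi^{(a)}}\rV_{L^\infty}\bigl(t^{-1}2^{2k}+t^{-1+\delta}2^{2k+\delta k}\bigr)+\lV\nab_\xi\widehat{P_k\Psi^{(a)}}\rV_{L^2}\bigl(t^{-1}2^{3k/2}+t^{-1+\delta}2^{3k/2+\delta k}\bigr).
\]

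First I would treat \eqref{Dec_Phi}. For the first factor, Bernstein and the energy bound give $\lV\widehat{P_k\Psi^{(a)}}\rV_{L^\infty}\lesssim 2^{3k/2}\lV P_k\Psi^{(a)}\rV_{L^2}\lesssim 2^{3k/2}2^{-N(|a|)k^+}\ep_1\<t\>^{\ka(|a|)\delta}$; for the second factor I would use \eqref{Main_Prop_Ass2} together with the commutator splitting above to bound $\lV\nab_\xi\widehat{P_k\Psi^{(a)}}\rV_{L^2}\lesssim 2^{-k}\lV|\xi|\nab_\xi\widehat{\Psi^{(a)}}\rV_{L^2(\mathrm{supp}\,\varphi_k)}+2^{-k}\lV P_k\Psi^{(a)}\rV_{L^2}\lesssim 2^{-k}2^{-N(|a|+1)k^+}\ep_1\<t\>^{\ka(|a|+1)\delta}$. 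Substituting these into the dispersive inequality, collecting the powers of $2^k$ (the small-frequency contribution $2^{2k}\cdot2^{3k/2}$ versus $2^{3k/2}\cdot2^{-k}=2^{k/2}$ giving the stated $2^{k^-/2}$, and the high-frequency loss $2^{-N(|a|+1)k^++(h/2+2)k^+}$ coming from the difference between $N(|a|)=N(|a|+1)+h$ and the gain of up to $2^{2k^+}$), and using $\ka(|a|)\le\ka(|a|+1)\le\ka(|a|+2)$ to absorb the time weights, yields \eqref{Dec_Phi} for $t\ge1$; for $t\le1$ the bound is trivial from Bernstein and Sobolev embedding since $\<t\>\approx1$.

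For \eqref{Dec_Phi/nab}, I would write $|\nab|^{-1}\Phi^{(a)}=\sum_k 2^{-k}\cdot 2^k|\nab|^{-1}P_k\Phi^{(a)}$ and sum \eqref{Dec_Phi} (with $|a|$ here at most $N_1-2$, so $\ka(|a|+2)\le 2$ hence the time exponent is $\le -1+5\delta$, comfortably inside $-1/2+2\delta$ for $t\ge1$) against $2^{-k}$: the low-frequency part $\sum_{k\le 0}2^{-k}2^{k^-/2}=\sum_{k\le0}2^{-k/2}$ diverges at $k\to-\infty$, so here one cannot simply use \eqref{Dec_Phi} at all frequencies — instead, for $2^k\lesssim\<t\>^{-1}$ I would use the crude bound $\lV|\nab|^{-1}P_k\Phi^{(a)}\rV_{L^\infty}\lesssim 2^{-k}2^{3k/2}\lV P_k\Phi^{(a)}\rV_{L^2}=2^{k/2}\lV P_k\Phi^{(a)}\rV_{L^2}\lesssim 2^{k/2}\ep_1$ (no decay, but summable down in $k$), and for $2^k\gtrsim\<t\>^{-1}$ use the dispersive bound; optimizing the split at $2^k\sim\<t\>^{-1}$ produces the $\<t\>^{-1/2}$ rate, while the high-frequency tail is summable using the negative power $2^{-N(|a|+1)k^+}$ with $N(|a|+1)$ large. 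The main obstacle is exactly this low-frequency bookkeeping in \eqref{Dec_Phi/nab} — the singular multiplier $|\nab|^{-1}$ against the $2^{k^-/2}$ decay from dispersion is borderline and requires the two-regime split rather than a direct summation; everything else is routine substitution of the bootstrap hypotheses into \eqref{Dis} and tracking exponents.
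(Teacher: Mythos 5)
Your treatment of the $\nab_\xi\widehat{P_k\Psi^{(a)}}$ factor and your two-regime argument for \eqref{Dec_Phi/nab} are fine, but there is a genuine gap in the first half of your proof of \eqref{Dec_Phi}: the inequality $\lV\widehat{P_k\Psi^{(a)}}\rV_{L^{\infty}}\lesssim 2^{3k/2}\lV P_k\Psi^{(a)}\rV_{L^2}$ is not a consequence of Bernstein and is in fact false. Bernstein controls $\lV P_kf\rV_{L^q}$ by $\lV P_kf\rV_{L^p}$ in \emph{physical} space; on the Fourier side the only free bound is $\lV\widehat{P_kf}\rV_{L^{\infty}}\le\lV P_kf\rV_{L^1}$, and $\lV P_kf\rV_{L^1}$ is not controlled by $\lV P_kf\rV_{L^2}$ without spatial localization (take $\widehat{f}$ to be a bump on a subset of the annulus $\{|\xi|\sim 2^k\}$ of measure $\epsilon$: the left side is $\sim1$ while the right side is $\sim 2^{3k/2}\epsilon^{1/2}$). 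This is exactly the point where the paper invokes the decomposition $P_k\Psi^{(a)}=\sum_{j}f_{j,k}$ of \eqref{Decom_Pkf} together with \eqref{Dec_fjk} and the Corollary bound \eqref{Qjk_Psi}, splitting the $j$-sum at $j=hk^+$; the second branch of \eqref{Dec_fjk} costs one angular derivative $\tilde{\Om}^{\alpha}\Psi^{(a)}$, $|\alpha|\le1$. That extra vector field is precisely why the stated bound carries the weight $\<t\>^{\ka(|a|+2)\delta}$ rather than $\<t\>^{\ka(|a|+1)\delta}$ and why the lemma is restricted to $|a|\le N_1-2$ — two features your argument cannot reproduce (your exponents would come out with $\ka(|a|+1)$ at best), which is itself a signal that the shortcut is not available. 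With only one $\xi$-derivative of $\widehat{\Psi^{(a)}}$ controlled by \eqref{Main_Prop_Ass2}, Sobolev embedding in $\xi$ in three dimensions also cannot give the pointwise bound, so some form of the $Q_{jk}$ machinery is unavoidable.

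For \eqref{Dec_Phi/nab}, your split at $2^k\sim\<t\>^{-1}$ (crude $L^2\to L^{\infty}$ Bernstein below, the dispersive bound above) is a legitimate alternative to the paper's argument, which instead interpolates $\lV P_k\Phi^{(a)}\rV_{L^p}\le\lV P_k\Phi^{(a)}\rV_{L^2}^{2/p}\lV P_k\Phi^{(a)}\rV_{L^{\infty}}^{1-2/p}$ with $2/p-1/2=\delta/4$ and sums directly; both yield $\<t\>^{-1/2+2\delta}$. But this part is conditional on \eqref{Dec_Phi}, so the gap above must be repaired first.
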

\begin{proof}
	On one hand, by (\ref{Dec_fjk}) and (\ref{Qjk_Psi}) when $k>0$,  we have
	\begin{equation*}    \label{Psi_k>0}
	\begin{aligned}
	\lV \widehat{P_k\Psi^{(a)}}\rV_{L^{\infty}}\lesssim &\sum_{j\leq hk^+} 2^{3j/2}\lV Q_{jk}\Psi^{(a)}\rV_{L^2}+\sum_{j>hk^+}2^{j/2-k+\beta(j+k)}\sum_{|\alpha|\leq 1}\lV Q_{jk}\tilde{\Om}^{\alpha}\Psi^{(a)}\rV_{L^2}\\
	\lesssim& 2^{hk^+/2}\ep_1\<t\>^{\ka(|a|+2)\delta}2^{-N(|a|+1)k^+-k}+2^{-hk^+/2}\ep_1\<t\>^{\ka(|a|+2)\delta}2^{-N(|a|+2)k^+-k}\\
	\lesssim & \ep_1\<t\>^{\ka(|a|+2)\delta} 2^{-N(|a|+1)k^++hk^+/2-k}.
	\end{aligned}
	\end{equation*}
	On the other hand, when $k\leq 0$ we have
	\begin{equation*} \label{Psi_k<0}
	\begin{aligned}
	\lV \widehat{P_k\Psi^{(a)}}\rV_{L^{\infty}}\lesssim &\sum_{j\geq -k}2^{j/2-k+\beta(j+k)}\sum_{|\alpha|\leq 1}\lV Q_{jk}\tilde{\Om}^{\alpha}\Psi^{(a)}\rV_{L^2}\\
	\lesssim& \ep_1\<t\>^{\ka(|a|+2)\delta} 2^{-3k/2}.
	\end{aligned}
	\end{equation*}
	Meanwhile, by (\ref{Main_Prop_Ass2}) and the definition of $P_k$, we also have
	\begin{equation*} \label{Psi_2}
	2^k\lV \nab_{\xi}\widehat{P_k\Psi^{(a)}}(\xi)\rV_{L^2}= \lV \frac{\xi}{|\xi|}\varphi_k'(\xi) \widehat{\Psi^{(a)}}(\xi)\rV_{L^2}+2^k\lV \varphi_k(\xi)\nab_{\xi}\widehat{\Psi^{(a)}}\rV_{L^2}\lesssim \ep_1 \<t\>^{\ka(|a|+1)\delta}2^{-N(|a|+1)k^+}.
	\end{equation*}
	Thus (\ref{Dec_Phi}) follows from (\ref{Dis}) and the above three bounds. 
	
	Next by (\ref{BernIneq}), H\"{o}lder inequality, (\ref{Main_Prop_Ass1}) and (\ref{Dec_Phi}), we have for $p\geq 2$
	\begin{align*}
	\lV|\nab|^{-1}\Phi^{(a)}(t)\rV_{L^{\infty}}&\lesssim \sum_k 2^{3k/p-k}\lV P_k \Phi^{(a)}(t) \rV_{L^p} \lesssim \sum_k 2^{3k/p-k}\lV P_k \Phi^{(a)}(t) \rV_{L^2}^{2/p} \lV P_k \Phi^{(a)}(t) \rV_{L^{\infty}}^{1-2/p}\\
	&\lesssim \ep_1 t^{(-1+\delta+\ka(|a|+2)\delta)(1-2/p)} \sum_k 2^{(2/p-1/2)k^-}2^{(1-2/p)(-N(|a|+2)+h/2+2)k^++k^+/p}.
	\end{align*}
	Choosing $p$ such that $2/p-1/2=\delta/4$, the bound (\ref{Dec_Phi/nab}) follows.
\end{proof}

As a consequence of Lemma \ref{Phi_inf}, we have
\begin{corollary}
	With the notations and hypothesis in Proposition \ref{Main_Prop}, we have
	\begin{equation}      \label{phi^2-Hna}
	\sum_{|b|+|c|\leq|a|}\lV\nab\phi^{(b)}\nab\phi^{(c)}\rV_{H^{N(a)}}\lesssim \ep_1^2\<t\>^{-1+4\delta}.
	\end{equation}
\end{corollary}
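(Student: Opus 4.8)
The plan is to reduce the product bound to the product of an $L^\infty$ decay estimate for the lower-order factor and an $L^2$ energy bound for the higher-order factor, feeding in the frequency-localized decay already established in Lemma \ref{Phi_inf}. By symmetry I first assume $|b|\le|c|$, so that $|b|\le|a|/2\le N_1/2\le N_1-2$ and the bound (\ref{Dec_Phi}) is available for $\Phi^{(b)}$. Since $\nab\phi^{(c)}$ is, up to Riesz transforms, equal to $|\nab|\phi^{(c)}=\tfrac1{2i}(\Phi^{(c)}-\overline{\Phi^{(c)}})$, its $L^2$-based Sobolev norms are controlled by those of $\Phi^{(c)}$, and the Littlewood--Paley pieces $\lV P_k\nab\phi^{(b)}\rV_{L^\infty}$ are controlled (after a harmless fattening of the projection) by $\lV P_k\Phi^{(b)}\rV_{L^\infty}$.

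Next I expand $\nab\phi^{(b)}\nab\phi^{(c)}$ by the Leibniz rule in $H^{N(a)}$ and, in each resulting term $\d^{n_1}\nab\phi^{(b)}\cdot\d^{n_2}\nab\phi^{(c)}$ with $|n_1|+|n_2|\le N(a)$, I place in $L^\infty$ the factor carrying the fewer derivatives — with one caveat: if that would be $\nab\phi^{(c)}$ but $|c|\ge N_1-1$ (so its $L^\infty$ decay estimate is not available), I instead keep $\nab\phi^{(b)}$ in $L^\infty$. Let $\ell$ be the number of derivatives on the $L^\infty$ factor. In the generic case $\ell\le N(a)/2$; in the caveat case $|c|\ge N_1-1$ forces $|a|\ge N_1-1$, so $N(a)=N_0-|a|h$ is already small and $\ell\le N(a)$ is still fine. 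The arithmetic verification that in all these cases $\ell<N(|b|+1)-h/2-2=49-6|b|$ — which is exactly the condition making the dyadic sum $\sum_k 2^{\ell k^+}\lV P_k\Phi^{(b)}\rV_{L^\infty}$ convergent once (\ref{Dec_Phi}) is inserted — is the one genuinely nonroutine point, and I expect it to be the main obstacle; it follows from $|b|\le|a|/2$ together with the explicit values $N_0=60$, $h=6$, $N_1=6$, but must be checked case by case (the tension is that small $|a|$ makes $N(a)$ large, but small $|a|$ also forces $|b|$ small).

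Finally I assemble the estimates. For the $L^2$ factor, $\lV\d^{n_2}\nab\phi^{(c)}\rV_{L^2}\les\lV\Phi^{(c)}\rV_{H^{N(c)}}\les\ep_1\<t\>^{\ka(c)\delta}$ by (\ref{Main_Prop_Ass1}), using $|n_2|\le N(a)\le N(c)$ since $|c|\le|a|$. For the $L^\infty$ factor, summing (\ref{Dec_Phi}) over $k$ and using $\ell<N(|b|+1)-h/2-2$ gives $\lV\d^{n_1}\nab\phi^{(b)}\rV_{L^\infty}\les\ep_1\<t\>^{-1+\delta+\ka(|b|+2)\delta}$. Each Leibniz term is therefore $\les\ep_1^2\<t\>^{-1+\delta+(\ka(|b|+2)+\ka(c))\delta}$, so it remains to check $\ka(|b|+2)+\ka(c)\le3$: if $|b|\le2$ then $\ka(|b|+2)=1$ and $\ka(c)\le2$; if $|b|=3$ then $|b|\le|c|$ and $|b|+|c|\le|a|\le N_1$ force $|c|=3$, giving $\ka(|b|+2)+\ka(c)=2+1=3$. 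Summing over the finitely many admissible $(b,c)$ and over the Leibniz terms then yields (\ref{phi^2-Hna}).
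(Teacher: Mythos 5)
Your argument is correct and is essentially the paper's proof: the paper also splits by symmetry/size of the indices and combines the frequency-localized decay (\ref{Dec_Phi}) for the lower-order factor with the $H^{N(a)}$ energy bound (\ref{Main_Prop_Ass1}) for the other, the only difference being that it invokes the ready-made product estimates (\ref{Hardy_1})--(\ref{Hardy_2}) of Lemma \ref{Hardy} where you re-derive them by a Leibniz/Littlewood--Paley expansion. Your exponent bookkeeping ($\ell<49-6|b|$ and $\ka(|b|+2)+\ka(c)\le 3$) checks out; just note that when the $L^\infty$ slot is taken by the $c$-factor the same arithmetic with $b$ and $c$ interchanged is needed, and it holds for the same reasons.
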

\begin{proof}
	When $|a|\geq N_1-1$, by symmetry we may assume that $|b|\leq |c|$, then the bound (\ref{phi^2-Hna}) is obtained by (\ref{Hardy_1}) and (\ref{Dec_Phi}). When $|a|\leq N_1-2$, the bound (\ref{phi^2-Hna}) is obtained by (\ref{Hardy_2}) and (\ref{Dec_Phi}).
\end{proof}

\subsection{Decay of $v$.} In order for the decay of $\nab^l v$, $l=0,1,2$, in a function space $X$, by $v$-equation in (\ref{Main_Sys-v}) and Duhamel's formula, it suffices to estimate
\begin{equation*}
\lV \nab^l v(t)\rV_X\lesssim \lV e^{-tL}\nab^l v_0\rV_X+\int_0^t \lV e^{-(t-s)L} \nab^l f(s) \rV_Xds,
\end{equation*}
where $f$ denotes the nonlinearities of $v$-equation in (\ref{Main_Sys-v}). Then we obtain the desired decay estimates by the linear decay estimates (\ref{Dec_heat}) and the following bound
\begin{equation*}
\int_0^t\lV e^{-(t-s)L} \nab^l f(s) \rV_Xds\lesssim \ep_1^2\<t\>^{-c}+\ep_1\sup_{s\in[t/2,t]}\lV \nab^l v(s)\rV_{X},\ {\rm for\ some }\ c>0,
\end{equation*}
which is the main part in the proof of the decay of $\nab^l v$.

Due to the presence of quadratic term $\nab (\phi\nab u)$, we begin with the decay of $\nab^2 v$ in $L^2$ space. 

\begin{lemma}    \label{d2v_Hn_lem}
	With the notations and hypothesis in Proposition \ref{Main_Prop}, for any $t\in[0,T]$, $|a|\leq N_1$, we have the decay estimate
	\begin{gather}        \label{nab3_v(b,c)_Dec}
	\lV\nab^2 v^{(a)}\rV_{H^{N(|a|+1)}}\lesssim \ep_1\<t\>^{-1/2+(3+|a|)\delta}.
	\end{gather}
	As a consequence, we have
	\begin{gather}     \label{Hntdtv(a)}
	\lV \d_t v^{(a)}(t)\rV_{H^{N(|a|+1)}}\lesssim \ep_1\<t\>^{-1/2+(3+|a|)\delta}.
	\end{gather}
	Furthermore, we obtain the estimate
	\begin{gather}           \label{nab_xi_v(b,c)_HN}
	\lV\FF^{-1}(|\xi|\nab_{\xi}\widehat{u^{(a)}})\rV_{H^{N(|a|+1)}} \lesssim \ep_1 \<t\>^{1/2+(3+|a|)\delta},\ \mathrm{for}\ |a|\leq N_1-1.
	\end{gather}
\end{lemma}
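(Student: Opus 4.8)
The plan is to run a Duhamel/bootstrap argument exactly along the lines indicated just before the statement: write $\nab^2 v^{(a)}(t) = e^{-tL}\nab^2 v_0^{(a)} + \int_0^t e^{-(t-s)L}\nab^2 f^{(a)}(s)\,ds$, where $f^{(a)}$ is the nonlinearity of the $v^{(a)}$-equation in \eqref{Main_Sys_VecFie}, and estimate each piece in $H^{N(|a|+1)}$. The linear term is handled by \eqref{Dec_heat} with $l=2$, $p=2$, $q=2$ and the data bound \eqref{MainAss_ini}, giving $\lesssim \ep_0 \<t\>^{-1}$, which is far better than the claimed rate. The heart of the matter is the Duhamel integral, which I would split at $s=t/2$. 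On $[0,t/2]$ the heat kernel provides the gain $(t-s)^{-3/4}$ (choosing $p=2$, $q=\ldots$ appropriately, or more simply $(t-s)^{-1}$ from $l=2$, $p=q=2$), so the time integral converges and one only needs an integrable-in-$s$ bound on $\lV \nab^2 f^{(a)}(s)\rV$ with a small loss $\<s\>^{(3+|a|)\delta}$; on $[t/2,t]$ I pull out $\sup_{s\in[t/2,t]}\lV\nab^2 v^{(a)}(s)\rV_{H^{N(|a|+1)}}$ from the quadratic terms that are linear in $\nab^2 v$ (or in the second derivatives of $u$), absorbing the resulting $\ep_1\sup_{[t/2,t]}\lV\nab^2 v^{(a)}\rV$ into the left side, which closes the estimate since $\ep_1$ is small.

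The main work is therefore the term-by-term analysis of $f^{(a)}$. The terms $u^{(b)}\cdot\nab u^{(c)}$ and $\d_j(\nab\phi^{(b)}\d_j\phi^{(c)})$ with $b+c=a$ are controlled using \eqref{Hardy_1}--\eqref{Hardy_2}, the energy bound \eqref{Main_Prop_Ass1} on $\lV v^{(a)}\rV_{H^{N(a)}}$ and on $\lV\nab v^{(a)}\rV_{L^2_t H^{N(a)}}$, and the orientation decay \eqref{phi^2-Hna}; the $\phi$-quadratic piece in particular already comes with the decay $\<s\>^{-1+4\delta}$ from \eqref{phi^2-Hna}, which is integrable. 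The genuinely delicate term is $\nab\otimes(\phi\otimes\nab u)$ from $\div\sigma$, the "second difficulty" flagged in the introduction: after distributing vector fields it has the schematic form $\nab(\phi^{(b)}\,A^{(c)})$ with $A^{(c)}$ first derivatives of $u^{(c)}$, so $\nab^2$ of it involves $\phi^{(b)}\,\nab^3 u^{(c)}$ and $\nab\phi^{(b)}\,\nab^2 u^{(c)}$ type products. For these I would put $\phi^{(b)}$ (or $\nab\phi^{(b)}$) in $L^\infty$ via \eqref{Dec_Phi}/\eqref{Dec_Phi/nab} — gaining a decay factor $\<s\>^{-1/2+2\delta}$ or better — and keep the $u^{(c)}$ factor in $L^2$; the highest-order piece $\phi^{(b)}\nab^3 u^{(c)}$ with $|b|=0$ is exactly the one that, on $[t/2,t]$, gets estimated by $\ep_1\sup_{[t/2,t]}\lV\nab^2 v^{(c)}\rV_{H^{N(|c|+1)}}$ after commuting $\U$ and using that $\nab^3 u \sim \nab^2(\nab u)$ lives at regularity $N(|a|+1)$ because we traded one derivative for the gain built into the index $N(|a|+1)=N(|a|)-h$. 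Care with the loss of derivatives (the shift from $N(a)$ to $N(|a|+1)$) is what makes the bookkeeping work, and this is where I expect to spend the most effort.

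Finally, \eqref{Hntdtv(a)} is immediate from \eqref{nab3_v(b,c)_Dec}: from the equation $\d_t v^{(a)} = -L v^{(a)} + \U\P(\ldots)$, the term $L v^{(a)}$ is a constant-coefficient second-order operator applied to $v^{(a)}$, hence bounded in $H^{N(|a|+1)}$ by $\lV\nab^2 v^{(a)}\rV_{H^{N(|a|+1)}} \lesssim \ep_1\<t\>^{-1/2+(3+|a|)\delta}$, while the nonlinear terms are bounded by the same expression used above (they are all quadratic and the decay is even better). For \eqref{nab_xi_v(b,c)_HN} I would use the identity \eqref{xidxif} with $f = u^{(a)}$, reducing $\lV\FF^{-1}(|\xi|\nab_\xi\widehat{u^{(a)}})\rV_{H^{N(|a|+1)}}$ to $\lV S u^{(a)}\rV_{H^{N(|a|+1)}} + \lV \Om u^{(a)}\rV_{H^{N(|a|+1)}} + \lV u^{(a)}\rV_{H^{N(|a|+1)}}$; since $S u^{(a)}$ and $\Om u^{(a)}$ are themselves of the form $u^{(a')}$ with $|a'|=|a|+1 \le N_1$ (modulo the $A_i$ correction in $\tilde\Om$ and modulo applying $\U^{-1}$ to $v^{(a')}$), the energy bound \eqref{Main_Prop_Ass1} gives $\lesssim \ep_1$, and the extra growth $\<t\>^{1/2+(3+|a|)\delta}$ in the claim is a generous margin — in fact one should get $\lesssim \ep_1\<t\>^{\ka\delta}$. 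The one subtlety is that $\U$ and $\U^{-1}$ are $0$-order Fourier multipliers (bounded on every $H^N$ and commuting with $S$ up to harmless lower-order terms), so passing between $u^{(a)}$ and $v^{(a)}$ costs nothing at this level; I would note this once and use it throughout.
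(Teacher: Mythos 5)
Your strategy for \eqref{nab3_v(b,c)_Dec} and \eqref{Hntdtv(a)} — Duhamel, splitting at $s=t/2$, gaining decay from the heat semigroup on $[0,t/2]$ and from $\lV\phi\rV_{L^\infty}\lesssim\ep_1\<s\>^{-1/2+2\delta}$ on $[t/2,t]$, then absorbing $\ep_1\sup_{[t/2,t]}\lV\nab^2 v^{(a)}\rV$ into the left side — is exactly the paper's. Two points, however, need repair. The minor one: the $v^{(a)}$-equation in \eqref{Main_Sys_VecFie} is not "linear evolution plus quadratic nonlinearity"; it also carries the lower-order \emph{linear} commutator term $\U\P L_1(u)$, a second-order operator acting on $u^{(c)}$ with $|c|<|a|$. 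Your Duhamel integral therefore contains $\int_0^t e^{-(t-s)L}\nab^{4}u^{(c)}\,ds$, which cannot be absorbed (it is not small in $\ep_1$) and is not integrable by heat decay alone; the paper handles it by running the whole argument as an induction on $|a|$ and invoking the inductive hypothesis on $\lV\nab^2 v^{(c)}\rV_{H^{N(|c|+1)}}$ for $s\in[t/2,t]$. Without the induction your estimate does not close for $|a|\geq 1$.

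The serious gap is in your treatment of \eqref{nab_xi_v(b,c)_HN}. The identity \eqref{xidxif} controls $\FF^{-1}(|\xi|\nab_\xi\widehat{f})$ by $\lV x\cdot\nab_x f\rV_{H^N}$, not by $\lV Sf\rV_{H^N}$: since $S=t\d_t+x_i\d_{x_i}$, you have $x\cdot\nab_x u^{(a)}=Su^{(a)}-t\,\d_t u^{(a)}$, so the reduction produces the additional term $\lV t\,\d_t u^{(a)}\rV_{H^{N(|a|+1)}}$, which is \emph{not} bounded by the energy. This term is estimated precisely by \eqref{Hntdtv(a)}, giving $t\cdot\ep_1\<t\>^{-1/2+(3+|a|)\delta}=\ep_1\<t\>^{1/2+(3+|a|)\delta}$ — which is why the lemma is stated with that growth rate, and why \eqref{Hntdtv(a)} is proved first. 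Your conclusion that the true bound is $\ep_1\<t\>^{\ka\delta}$ and that the stated $\<t\>^{1/2+(3+|a|)\delta}$ is "a generous margin" is therefore false; dropping the $t\d_t$ contribution is exactly the mistake that makes the claim look too easy.
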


\begin{proof}
	\emph{Step 1: Prove the estimate (\ref{nab3_v(b,c)_Dec}). } We prove the bound by induction. Assume that
	\begin{equation}     \label{dec_d2vaL2_ass}
	\lV\nab^2 v^{(c)}\rV_{H^{N(|c|+1)}}\lesssim \ep_1 \<t\>^{-1/2+(3+|c|\delta)},\ \ \mathrm{for}\ |c|<|a|,
	\end{equation}
	By Duhamel's formula and (\ref{Main_Sys_VecFie}), it suffices to prove
	\begin{gather}\label{dec_d2vL2_1}
	\lV e^{-tL}\nab^2 v_0^{(a)}\rV_{H^{N(|a|+1)}}\lesssim \ep_0 t^{-1},         \\\label{dec_d2vaL2_1'}
	\int_0^t\lV  e^{-(t-s)L} \nab^4 u^{(c)}(s)\rV_{H^{N(|a|+1)}}ds\lesssim \ep_1 \<t\>^{-1/2+(3+|a|)\delta},\ {\rm for}\ |c|<|a|,\\
	\label{dec_d2vL2_2}
	\int_0^t\lV  e^{-(t-s)L} \nab^2 f^{(a)}(s)\rV_{H^{N(|a|+1)}}ds\lesssim \ep_1^2 \<t\>^{-1/2+(3+|a|)\delta}+\ep_1 \sup_{s\in[t/2,t]} \lV\nab^2 v^{(a)}(s)\rV_{H^{N(|a|+1)}}.
	\end{gather}
	where the nonlinearities $f^{(a)}$ is
	\begin{equation*}       
	f^{(a)}=\nab(\phi^{(b)}\nab u^{(c)})+u^{(b)}\cdot\nab u^{(c)}+\nab(\Phi^{(b)}\Phi^{(c)})+Z^{a}(Err11+Err12).
	\end{equation*}
	In fact, once (\ref{dec_d2vL2_1})-(\ref{dec_d2vL2_2}) hold, from Duhamel's formula we have
	\begin{align*}
	\sup_{t\in[1,T]} t^{1/2-(3+|a|)\delta}\lV\nab^2 v^{(a)}(t)\rV_{H^{N(|a|+1)}}\lesssim \ep_1^2 +\ep_1 \sup_{t\in[1,T]} t^{1/2-(3+|a|)\delta}\lV\nab^2 v^{(a)}(t)\rV_{H^{N(|a|+1)}},
	\end{align*}
	which implies the bound (\ref{nab3_v(b,c)_Dec}).
	
	Now we begin to prove the bounds (\ref{dec_d2vL2_1})-(\ref{dec_d2vL2_2}). (\ref{dec_d2vL2_1}) is a consequence of (\ref{MainAss_ini}) and (\ref{Dec_heat}). For the second bound (\ref{dec_d2vaL2_1'}), using (\ref{Dec_heat}), (\ref{Main_Prop_Ass1}) and (\ref{dec_d2vaL2_ass}) we have
	\begin{align*}
	& \int_0^t\lV e^{-(t-s)L} \nab^4 u^{(c)}(s)\rV_{H^{N(|a|+1)}}ds\\
	\lesssim & \int_0^{t/2} (t-s)^{-2}\lV u^{(c)}\rV_{H^{N(|a|+1)}}ds+\int_{t/2}^t \<t-s\>^{-1} \lV\nab^2 u^{(c)}\rV_{H^{N(|a|+1)}}ds\\
	\lesssim & \ep_1 t^{-1}+ \ep_1 t^{-1/2+(3+|c|\delta)}\log t
	\lesssim  \ep_1 t^{-1/2+(3+|a|)\delta}.
	\end{align*}
	 For the last bound (\ref{dec_d2vL2_2}), the contributions of the error terms $Z^a(Err11+Err12)$ can be estimated as same as the quadratic terms, then it suffices to estimate the following two cases
	 \begin{equation*}
	 \int_0^{t/2}\lV  e^{-(t-s)L} \nab^2 \tilde{f}^{(a)}(s)\rV_{H^{N(|a|+1)}}ds+\int_{t/2}^t\lV  e^{-(t-s)L} \nab^2 \tilde{f}^{(a)}(s)\rV_{H^{N(|a|+1)}}ds:=I+II,
	 \end{equation*}
	 where $\tilde{f}^{(a)}$ denotes the quadratic terms in $f^{(a)}$. For $I$ we ultilize (\ref{Dec_heat}) to gain decay. Indeed, by (\ref{Dec_heat}) and (\ref{Main_Prop_Ass1}) we have
	 \begin{align*}
	 I\lesssim &\int_0^{t/2} (t-s)^{-3/2}\lV \phi^{(b)}\nab u^{(c)}+u^{(b)}u^{(c)}\rV_{H^{N(|a|+1)}}+(t-s)^{-9/4}\lV\Phi^{(b)}\Phi^{(c)}\rV_{W^{N(|a|+1),1}}ds\\
	 \lesssim & t^{-1/2}(\lV\Phi^{(b)}\rV_{H^{N(|a|)}}+\lV u^{(b)}\rV_{H^{N(|a|)}})\lV u^{(c)}\rV_{H^{N(|a|)}}+t^{-5/4}\lV\Phi^{(b)}\rV_{H^{N(a)}}\lV\Phi^{(c)}\rV_{H^{N(a)}}\\
	 \lesssim &\ep_1^2 t^{-1/2+2\delta}+\ep_1^2 t^{-5/4+4\delta}\lesssim \ep_1^2 t^{-1/2+2\delta}.
	 \end{align*}
	 For $II$ we ultilize (\ref{Dec_Phi}) to gain decay. By (\ref{Dec_heat}) we have
	 \begin{align*}
	 II\lesssim &\int_{t/2}^t \<t-s\>^{-3/2}\lV \phi^{(b)}\nab u^{(c)}\rV_{H^{N(|a|+1)+3}}+\<t-s\>^{-3/8}(t-s)^{-3/4}\lV u^{(b)}u^{(c)}\rV_{H^{N(|a|+1)+3/4}}\\
	 &+\<t-s\>^{-3/2}\lV\Phi^{(b)}\Phi^{(c)}\rV_{H^{N(|a|+1)+3}}ds\\
	 \lesssim & \sup_{s\in[t/2,t]} (\lV\Phi^{(b)}(s)\rV_{H^{N(|a|)}}\lV\nab u^{(c)}(s)\rV_{L^{\infty}}+\lV\phi^{(b)}(s)\rV_{L^{\infty}}\lV u^{(c)}(s)\rV_{H^{N(|a|)}})\\
	 &+\sup_{s\in[t/2,t]}\lV u^{(b)}(s)\rV_{H^2} \lV\nab^2 u^{(c)}(s)\rV_{H^{N(|a|+1)}}+ \ep_1^2 t^{-1+4\delta}.
	 \end{align*}
	 We then use (\ref{Main_Prop_Ass1}), (\ref{Dec_Phi}) and (\ref{phi^2-Hna}) to bound this by
	 \begin{align*}
	 II\lesssim  & \ep_1^2 t^{\kappa(b)-1/2+(3+|c|)\delta}+\ep_1\lV \nab^2 u^{(a)}\rV_{H^{N(|a|+1)}}+\ep_1^2 t^{-1/2+2\delta}\\
	 \lesssim & \ep_1^2 t^{-1/2+(3+|a|)\delta}+\ep_1\sup_{s\in [t/2,t]}\lV \nab^2 u^{(a)}(s)\rV_{H^{N(|a|+1)}}.
	 \end{align*}
	 This completes the proof of (\ref{nab3_v(b,c)_Dec}).

	\emph{Step 2: Estimate (\ref{Hntdtv(a)}).} From $v^{(a)}$-equation in (\ref{Main_Sys_VecFie}) and (\ref{nab3_v(b,c)_Dec}) we have
	\begin{equation}    \label{Hndtva_pf}
	\begin{aligned}
	\lV\d_t v^{(a)}\rV_{H^{N(|a|+1)}}\lesssim &\ep_1\<t\>^{-1/2+(3+|a|)\delta}+\sum_{|b|+|c|\leq a}\big(\lV \nab (\phi^{(b)}\nab u^{(c)})\rV_{H^{N(|a|+1)}}+\lV u^{(b)}\cdot\nab u^{(c)}\rV_{H^{N(|a|+1)}}\\
	&+\lV \d_j(\nab \phi^{(b)}\d_j\phi^{(c)})\rV_{H^{N(|a|+1)}}\big)+\lV Err11^{(a)}\rV_{H^{N(|a|+1)}}+\lV Err12^{(a)}\rV_{H^{N(|a|+1)}}.
	\end{aligned}
	\end{equation}
	The estimate of $\d_j(\nab \phi^{(b)}\d_j\phi^{(c)})$ is obtained by (\ref{Dec_Phi}) and (\ref{phi^2-Hna}), we then estimate other terms. By (\ref{nab3_v(b,c)_Dec}) it follows that
	\begin{align*}
	\lV \nab (\phi^{(b)}\nab u^{(c)})\rV_{H^{N(|a|+1)}}
	\lesssim & \lV\nab\phi^{(b)}\rV_{H^{N(|a|+1)}}\lV\nab^2 u^{(c)}\rV_{H^{N(|a|+1)}}\\
	\lesssim &\ep_1^2 t^{-1/2+\ka(b)\delta+(3+|c|)\delta}
	\lesssim  \ep_1^2 t^{-1/2+(3+|a|)\delta},
	\end{align*}
	and
	\begin{align*}
	\lV u^{(b)}\cdot\nab u^{(c)}\rV_{H^{N(|a|+1)}}\lesssim \lV u^{(b)}\rV_{H^{N(|a|+1)}}\lV\nab^2 u^{(c)}\rV_{H^{N(|a|+1)}}
	\lesssim \ep_1^2 t^{-1/2+(3+|a|)\delta}.
	\end{align*}
	The error terms $Err11$ and $Err12$ can be estimated similarly. Hence, the bound (\ref{Hntdtv(a)}) follows.
	
	Finally, from (\ref{xidxif}) and (\ref{Main_Prop_Ass1}), we have for any $|a|\leq N_1-1$
	\begin{align*}
	\lV\FF^{-1}(|\xi|\nab_{\xi}\widehat{u^{(a)}})\rV_{H^{N(|a|+1)}}\lesssim &\lV Su^{(a)}\rV_{H^{N(|a|+1)}}+\lV\Om u^{(a)}\rV_{H^{N(|a|+1)}}+\lV u^{(a)}\rV_{H^{N(|a|+1)}}+\lV t\d_t u^{(a)}\rV_{H^{N(|a|+1)}}\\
	\lesssim & \ep_1 + \lV t\d_t v^{(a)}\rV_{H^{N(|a|+1)}}.
	\end{align*}
	 Then by (\ref{Hntdtv(a)}) we obtain the bound (\ref{nab_xi_v(b,c)_HN}). This completes the proof of the Lemma.
	
\end{proof}

Next, we prove the decay estimates of $v$.
\begin{lemma}       \label{Dec_lem}
	With the notations and hypothesis in Proposition \ref{Main_Prop}, for any $t\in[0,T]$, $|a|\leq N_1$ and $p=\frac{3}{2\delta}$, we have
	\begin{gather}            \label{Dec_v_VecFie}
	\lV v^{(a)}\rV_{W^{N(|a|+1),p}}\lesssim \ep_1\<t\>^{-3/4+(1+|a|)\delta},
	\end{gather}
\end{lemma}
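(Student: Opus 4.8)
The plan is to estimate $\lV v^{(a)}\rV_{W^{N(|a|+1),p}}$ with $p=\tfrac{3}{2\delta}$ directly through Duhamel's formula applied to the $v^{(a)}$-equation in \eqref{Main_Sys_VecFie}, exactly in the scheme sketched before Lemma~\ref{d2v_Hn_lem}: namely, reduce matters to the linear bound
\begin{equation*}
\lV e^{-tL} v_0^{(a)}\rV_{W^{N(|a|+1),p}}\lesssim \ep_0 \<t\>^{-3/4}
\end{equation*}
coming from \eqref{Dec_heat} (with the gain $t^{-\frac32(\frac12-\frac1p)}=t^{-3/4+\delta}$ from $L^2\to L^p$ embedding of the initial data in \eqref{MainAss_ini}), plus a nonlinear bound
\begin{equation*}
\int_0^t \lV e^{-(t-s)L}\nab f^{(a)}(s)\rV_{W^{N(|a|+1),p}}\,ds\lesssim \ep_1^2\<t\>^{-3/4+(1+|a|)\delta}+\ep_1\sup_{s\in[t/2,t]}\lV v^{(a)}(s)\rV_{W^{N(|a|+1),p}},
\end{equation*}
after which the conclusion follows by absorbing the last term (since $\ep_1$ is small) and a continuity/bootstrap argument in $t$. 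As in Lemma~\ref{d2v_Hn_lem}, I would first set up an induction on $|a|$, so that for the commutator/lower-order terms $L_1(u)$ and for terms $Z^a(\cdot)$ landing on strictly lower-order factors $v^{(c)}$, $\phi^{(c)}$ with $|c|<|a|$, the already-established decay for lower multi-indices is available.

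The core work is splitting the time integral at $s=t/2$ and estimating $I=\int_0^{t/2}$ and $II=\int_{t/2}^t$ separately for each nonlinear contribution $f^{(a)}=\nab(\phi^{(b)}\nab u^{(c)})+u^{(b)}\cdot\nab u^{(c)}+\nab(\Phi^{(b)}\Phi^{(c)})+Z^a(Err11+Err12)$. For $I$, one uses the full smoothing of $e^{-(t-s)L}$: by \eqref{Dec_heat} with $q=p$, $p_{\mathrm{in}}\in\{1,2\}$, one extracts factors $(t-s)^{-3/2}\sim t^{-3/2}$ or $(t-s)^{-3/4-\frac34}$ type powers which, after integrating $ds$ over $[0,t/2]$ and bounding the nonlinearity in low norms via \eqref{Main_Prop_Ass1}, the Hardy-type inequalities \eqref{Hardy_1}--\eqref{Hardy_2}, and the quadratic-$\phi$ bound \eqref{phi^2-Hna}, produces decay faster than $\<t\>^{-3/4}$ (up to $\delta$-losses). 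For $II$, where $t-s$ may be small, one instead spends the $L^\infty$/$L^2$ decay already proven for the factors: $\lV\nab u^{(c)}\rV_{L^\infty}$ and $\lV\nab^2 v^{(c)}\rV_{H^{N(|c|+1)}}$ from \eqref{nab3_v(b,c)_Dec}, $\lV P_k\Phi^{(b)}\rV_{L^\infty}$ from \eqref{Dec_Phi}, $\lV|\nab|^{-1}\Phi^{(b)}\rV_{L^\infty}$ from \eqref{Dec_Phi/nab}, and $\lV\phi^{(b)}\rV_{L^\infty}$; here the heat kernel only needs an integrable singularity $\<t-s\>^{-\alpha}(t-s)^{-\gamma}$ with $\gamma<1$ from \eqref{Dec_heat} (again $L^2\to L^p$ or $L^1\to L^p$), so $\int_{t/2}^t ds$ converges, and the decay in $s\sim t$ of the factors gives the claimed $\<t\>^{-3/4+(1+|a|)\delta}$ — except for the one term $\nab(\phi^{(b)}\nab u^{(c)})$ with $b=a$, $c=0$ (or symmetric), which produces exactly the self-referential term $\ep_1\sup_{s\in[t/2,t]}\lV v^{(a)}(s)\rV_{W^{N(|a|+1),p}}$ that is absorbed at the end.

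The main obstacle, as the authors flag in the "Main ideas" discussion, is precisely the quadratic term $\nab\otimes(\phi\otimes\nab u)$, i.e.\ $\nab(\phi^{(b)}\nab u^{(c)})$ coming from $\div\sigma$: the naive estimate using only $e^{-tL}$ smoothing and the bootstrap bounds \eqref{Main_Prop_Ass1} does not close, because $\phi$ and $\nab u$ each decay too slowly in $L^\infty$ to beat $\<t\>^{-3/4}$ after one derivative is thrown onto the product. The resolution is to avoid pairing the two slow factors symmetrically: in region $II$ estimate $\lV\phi^{(b)}\nab u^{(c)}\rV$ by putting $\phi^{(b)}$ (or its angular component) in $L^\infty$ via \eqref{Dec_Phi}/\eqref{rv} and $\nab u^{(c)}$ in the high Sobolev norm (or $\nab^2 u^{(c)}=\nab^2 v^{(c)}$-type quantity in $L^2$ with decay \eqref{nab3_v(b,c)_Dec}), thereby genuinely exploiting the $L^2$-decay of second derivatives of $v$ rather than $L^\infty$-decay of first derivatives; the leftover top-order piece with $c=0$ is what gives the absorbable term. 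A secondary technical point is bookkeeping the derivative counts: one must check that the number of derivatives demanded on each factor — after accounting for the extra $\nab$ from $\nab(\cdots)$, the smoothing power $\nab^l$ one is willing to move onto $e^{-(t-s)L}$, and the Sobolev index $N(|a|+1)$ — stays within the available regularity $N(|a|)\ge N(|a|+1)+h$, which is exactly why the large gap $h=6$ was built into the hypotheses.
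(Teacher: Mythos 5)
Your proposal is correct and follows essentially the same route as the paper's proof: induction on $|a|$, Duhamel's formula with the split at $s=t/2$, full heat smoothing on $[0,t/2]$, and on $[t/2,t]$ the asymmetric pairing of $\|\phi^{(b)}\|_{L^\infty}$ (via \eqref{Dec_Phi/nab}) with the $L^2$-decay of $\nabla^2 u^{(a)}$ from \eqref{nab3_v(b,c)_Dec}, leaving exactly the self-referential term $\ep_1\sup_{s\in[t/2,t]}\|v^{(a)}(s)\|_{W^{\cdot,p}}$ from the $c=a$ case of $\nabla(\phi\nabla u^{(a)})$ to be absorbed. This matches the paper's treatment, including the handling of the commutator terms $L_1(u)$ by the induction hypothesis.
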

\begin{proof}
	Here it suffices to prove (\ref{Dec_v_VecFie}) for $t> 1$, otherwise this estimate is obtained by Sobolev embedding and (\ref{Main_Prop_Ass1}). We prove (\ref{Dec_v_VecFie}) by induction. Assume that
\begin{equation}       \label{Decva_ass}
\lV v^{(c)}\rV_{W^{N(|c|+1),p}}\lesssim \ep_1\<t\>^{-3/4+(1+|c|)\delta},\ \mathrm{for}\  \ |c|<|a|.
\end{equation}
By (\ref{Dec_heat}) and Duhamel's formula, it suffices to prove
\begin{gather}
\int_0^t \lV e^{-(t-s)L} \Delta u^{(c)}\rV_{W^{N(|a|+1),p}}ds\lesssim \ep_1^2 t^{-3/4+(1+|a|)\delta},\ \mathrm{for}\ |c|<|a|,\label{Dec_Pf_Delva}\\
\int_0^t\lV e^{-(t-s)L} \tilde{f}^{(a)}(s)\rV_{W^{N(|a|+1),p}}ds\lesssim \ep_1\sup_{s\in[t/2,t]}\lV v^{(a)}(s)\rV_{W^{1,p}} +\ep_1^2 t^{-3/4+(1+|a|)\delta},\label{Dec_Pf_phiua}.  
\end{gather}
where $\tilde{f}^{(a)}$ denotes the quadratic terms, i.e. 
\begin{equation*}
\tilde{f}^{(a)}=\nab(\phi^{(b)}\nab u^{(c)})+u^{(b)}\cdot\nab u^{(c)}+\nab(\Phi^{(b)}\Phi^{(c)}),\ |b|+|c|\leq |a|.
\end{equation*}

For the first bound (\ref{Dec_Pf_Delva}), it follows from (\ref{Dec_heat}), (\ref{Main_Prop_Ass1}) and (\ref{Decva_ass}) that
\begin{align*}
& \int_0^t\lV e^{-(t-s)H} \Delta u^{(c)}(s)\rV_{W^{N(|a|+1),p}}ds\\
\lesssim & \int_0^{t/2} (t-s)^{-7/4+3/2p} \lV u^{(c)}\rV_{H^{N(|a|+1)}} ds
+\int_{t/2}^{t} \<t-s\>^{-1} \lV  u^{(c)}\rV_{W^{N(|a|+1)+2,p}} ds\\
\lesssim &\ep_1 t^{-3/4+3/2p}+\log t \sup_{s\in[t/2,t]}\lV v^{(c)}(s)\rV_{W^{N(|a|+1)+2,p}}\\
\lesssim & \ep_1 t^{-3/4+\delta}+\ep_1 t^{-3/4+(2+|c|)\delta}\lesssim  \ep_1 t^{-3/4+(1+|a|)\delta}.
\end{align*}
To prove the second bound (\ref{Dec_Pf_phiua}), we divide the left-hand side of (\ref{Dec_Pf_phiua}) into
\begin{equation*}
{\rm LHS (\ref{Dec_Pf_phiua})}=\int_0^{t/2}\lV e^{-(t-s)L} \tilde{f}^{(a)}(s)\rV_{W^{N(|a|+1),p}}ds+\int_{t/2}^t\lV e^{-(t-s)L} \tilde{f}^{(a)}(s)\rV_{W^{N(|a|+1),p}}ds:=I+II.
\end{equation*}
We ultilize (\ref{Dec_heat}) and (\ref{Main_Prop_Ass1}) to bound $I$ by
\begin{align*}
I\lesssim & \int_0^{t/2} (t-s)^{-5/4+3/2p}\lV\phi^{(b)}\nab u^{(c)}+ u^{(b)}\cdot  u^{(c)}\rV_{H^{N(|a|+1)}}+(t-s)^{-2+3/2p} \lV\nab\phi^{(b)}\nab\phi^{(c)})\rV_{W^{N(|a|+1),1}}ds\\
\lesssim & t^{-5/4+3/2p}\int_0^{t/2}(\lV\Phi^{(b)}\rV_{H^{N(|a|)}}+\lV u^{(b)}\rV_{H^{N(|a|)}})\lV\nab u^{(c)}\rV_{H^{N(|a|)}}ds+t^{-1+\delta} \lV\Phi^{(b)}\rV_{H^{N(a)}}\lV\Phi^{(c)}\rV_{H^{N(a)}}\\
\lesssim & \ep_1^2 t^{-3/4+(1+|a|)\delta}+\ep_1^2 t^{-1+4\delta}\lesssim \ep_1^2 t^{-3/4+(1+|a|)\delta}.
\end{align*}
Next for the other term $II$, we consider the contributions of $\nab(\phi^{(b)}\nab u^{(c)})$, $u^{(b)}\cdot\nab u^{(c)}$ and $\nab(\Phi^{(b)}\Phi^{(c)})$, respectively. Namely, 
\begin{align*}
II=&\int_{t/2}^t\lV e^{-(t-s)L} \nab(\phi^{(b)}\nab u^{(c)})\rV_{W^{N(|a|+1),p}}ds+\int_{t/2}^t\lV e^{-(t-s)L} u^{(b)}\cdot\nab u^{(c)}\rV_{W^{N(|a|+1),p}}ds\\
&+\int_{t/2}^t\nab(\Phi^{(b)}\Phi^{(c)})\rV_{W^{N(|a|+1),p}}ds:=II_1+II_2+II_3.
\end{align*}

First, we estimate $II_1$. If $c=a$, using (\ref{Dec_heat}), (\ref{Hardy_2}), (\ref{Main_Prop_Ass1}), (\ref{Dec_Phi/nab}) and (\ref{nab3_v(b,c)_Dec}) it follows that
\begin{align*}
II_{11}:=&\int_{t/2}^t \lV e^{-(t-s)L} \nab(\phi^{(b)}\nab u^{(c)})\rV_{\dot{W}^{N(|a|+1),p}} ds\\
\lesssim & \int_{t/2}^{t-1} (t-s)^{-3} \lV \phi\nab u^{(a)}\rV_{\dot{W}^{N(|a|+1)-5,p}} ds
+\int_{t-1}^t (t-s)^{-3/4} \lV \phi\nab u^{(a)}\rV_{\dot{W}^{N(|a|+1)-1/2,p}} ds\\
\lesssim & \sup_{s\in[t/2,t]} (\lV\Phi(s)\rV_{H^{N(a)}}\lV\nab u^{(a)}(s)\rV_{L^p}+\lV\phi(s)\rV_{\Lf} \lV \nab^2 u^{(a)}(s)\rV_{H^{N(|a|+1)}})\\
\lesssim & \ep_1\sup_{s\in[t/2,t]}\lV\nab v^{(a)}(s)\rV_{L^p} +\ep_1^2 t^{-1+(5+|a|)\delta}.
\end{align*}
And by integration by parts, (\ref{Dec_heat}), (\ref{Main_Prop_Ass1}), (\ref{Dec_Phi/nab}) and (\ref{nab3_v(b,c)_Dec}), we get
\begin{align*}
II_{12}:=&\int_{t/2}^t \lV e^{-(t-s)L} P_{<0} \nab(\phi^{(b)}\nab u^{(c)})\rV_{L^p} ds\\
\lesssim &\int_{t/2}^t \lV e^{-(t-s)L} P_{<0}[\nab^2(\phi u^{(a)})-\nab(\nab\phi u^{(a)})]\rV_{L^p} ds\\
\lesssim & \int_{t/2}^t \<t-s\>^{-1} \lV\phi u^{(a)}\rV_{W^{2,p}}ds +\int_{t/2}^t \<t-s\>^{-5/4+3/2p} \lV\nab\phi u^{(a)}\rV_{H^1} ds\\
\lesssim & \ep_1 t^{-1/2+2\delta}\log t \sup_{s\in[t/2,t]}\lV u^{(a)}(s)\rV_{W^{2,p}}+\ep_1^2 t^{-1+2\delta}\\
\lesssim & \ep_1 \sup_{s\in[t/2,t]} \lV v^{(a)}(s)\rV_{W^{2,p}}+\ep_1^2 t^{-1+2\delta}.
\end{align*}
If $|c|<|a|$, using (\ref{Dec_heat}), (\ref{Dec_Phi}), (\ref{Main_Prop_Ass1}) and (\ref{Decva_ass}), we have
\begin{align*}
II_{11}\lesssim & \int_{t/2}^{t} \<t-s\>^{-3} \lV \phi^{(b)}\nab u^{(c)}\rV_{W^{N(|a|+1)+1,p}} ds
\lesssim  \sup_{s\in[t/2,t]} \lV\Phi^{(b)}(s)\rV_{H^{N(a)}}\lV v^{(c)}(s)\rV_{W^{N(a),p}}\\
\lesssim & \ep_1^2 t^{\ka(b)\delta-3/4+(1+|c|)\delta}\lesssim \ep_1^2 t^{-3/4+(1+|a|)\delta},
\end{align*}
and
\begin{align*}
II_{12}\lesssim &\int_{t/2}^t \lV e^{-(t-s)L}P_{<0} [\nab^2(\phi^{(b)} u^{(c)})-\nab(\nab\phi^{(b)} u^{(c)})]\rV_{L^p} ds\\
\lesssim & \int_{t/2}^t \<t-s\>^{-1-3/2p} \lV\phi^{(b)} u^{(c)}\rV_{W^{2,p/2}}ds +\int_{t/2}^t \<t-s\>^{-5/4+3/2p} \lV\nab\phi^{(b)} u^{(c)}\rV_{H^1} ds\\
\lesssim & \ep_1^2 t^{\ka(b)\delta-3/4+(1+|c|)\delta}\lesssim \ep_1^2 t^{-3/4+(1+|a|)\delta}.
\end{align*}
These are acceptable for $II_1$. 

In order to estimate $II_2$, by $\text{div} u^{(b)}=0$ and (\ref{Dec_heat}) we have
\begin{align*}
II_2\lesssim  \int_{t/2}^t \<t-s\>^{-\frac{5}{4}+\frac{3}{2p}} \lV  u^{(b)}\cdot  u^{(c)}\rV_{H^{N(|a|+1)+3}} ds
\lesssim  \sup_{s\in[t/2,t]}\lV u^{(b)}u^{(c)}\rV_{H^{N(|a|+1)+3}}.
\end{align*}
If $|a|=0$, by (\ref{Main_Prop_Ass1}) we have
\begin{equation*}
\sup_{s\in[t/2,t]}\lV u^2\rV_{H^{N(|a|+1)+3}}\lesssim \sup_{s\in[t/2,t]}\lV u\rV_{H^{N(0)}}\lV u\rV_{L^{\infty}}\lesssim \ep_1 \sup_{s\in[t/2,t]}\lV v\rV_{W^{1,p}}.
\end{equation*}
If $|a|>0$, by symmetry we may assume that $|c|< |a|$, then from (\ref{Main_Prop_Ass1}) and (\ref{Decva_ass}) we obtain 
\begin{equation*}
\sup_{s\in[t/2,t]}\lV u^{(b)}u^{(c)}\rV_{H^{N(|a|+1)+3}}\lesssim \sup_{s\in[t/2,t]}\lV u^{(b)}\rV_{H^{N(a)}}\lV  u^{(c)}\rV_{W^{N(|c|+1),p}}\lesssim \ep_1^2 t^{-3/4+(1+|c|)\delta}.
\end{equation*}
Thus the contribution of $II_2$ is acceptable. 

Finally, for the term $II_3$, by (\ref{Dec_heat}), (\ref{Main_Prop_Ass1}) and (\ref{phi^2-Hna}) we obtain
\begin{align*}
II_3\lesssim & \int_{t/2}^t \<t-s\>^{-5/4+3/2p} \lV\Phi^{(b)}\Phi^{(c)}\rV_{H^{N(|a|)}}ds
\lesssim \ep_1^2\<t\>^{-1+4\delta}.
\end{align*}
This concludes the bound (\ref{Dec_Pf_phiua}), and hence completes the proof of the lemma.	
\end{proof}

Base on the linear decay estimates (\ref{Dec_heat}), we prove the following decay estimates of $\nab^l v$ for $l= 1,2$.
\begin{lemma}  \label{Dec_lem_VF}
	With the notations and hypothesis in Proposition \ref{Main_Prop}. For any $t\in [0,T]$, $l=1,2$ and $p=\frac{3}{2\delta}$. If $|a|\leq N_1-2$, we have
	\begin{equation}        \label{Dec_d2v_Bet}
	\lV \nab^l v^{(a)}\rV_{W^{N(|a|+2)-l,p}}\lesssim \ep_1\<t\>^{-5/4+(4+|a|)\delta}.
	\end{equation}
	If $|a|= N_1-1\ or\ N_1$, we have
	\begin{gather}        \label{Dec_nab2v_VF}
	\lV \nab^l v^{(a)}\rV_{W^{N(|a|+2)-l,p}}\lesssim \ep_1\<t\>^{-1+(6+|a|)\delta}.
	\end{gather}
\end{lemma}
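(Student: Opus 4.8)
The plan is to follow the scheme of Lemma~\ref{Dec_lem}: induct on $|a|$, write Duhamel's formula for $v^{(a)}$ from \eqref{Main_Sys_VecFie}, insert the linear decay estimate \eqref{Dec_heat}, and split the time integral at $s=t/2$. Assuming \eqref{Dec_d2v_Bet}--\eqref{Dec_nab2v_VF} for all $|c|<|a|$, the quantities to control in $W^{N(|a|+2)-l,p}$ are
\begin{gather*}
\|e^{-tL}\nab^l v_0^{(a)}\|,\qquad \int_0^t\|e^{-(t-s)L}\nab^l\Delta u^{(c)}(s)\|\,ds\ \ (|c|<|a|),\\
\int_0^t\|e^{-(t-s)L}\nab^l f^{(a)}(s)\|\,ds,
\end{gather*}
where $f^{(a)}=\nab(\phi^{(b)}\nab u^{(c)})+u^{(b)}\cdot\nab u^{(c)}+\nab(\Phi^{(b)}\Phi^{(c)})+Z^a(Err11+Err12)$ with $|b|+|c|\le|a|$. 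I would dispatch the data term with \eqref{MainAss_ini} and \eqref{Dec_heat} in the form $(p,q)=(1,p)$, which gives $t^{-3/2+\delta-l/2}$, much more than needed. For the linear commutator term I split at $t/2$: on $[0,t/2]$ use the $L^2\to W^{N(|a|+2)-l,p}$ smoothing of $e^{-(t-s)L}$ and the energy bound \eqref{Main_Prop_Ass1} for $u^{(c)}$; on $[t/2,t]$ use the parabolic smoothing $\|e^{-(t-s)L}\nab^l\Delta g\|_{W^{N,p}}\lesssim\langle t-s\rangle^{-1}\|\nab^l g\|_{W^{N,p}}$ together with the inductive hypothesis \eqref{Dec_d2v_Bet}--\eqref{Dec_nab2v_VF} (or \eqref{Dec_v_VecFie}) for $\nab^l v^{(c)}$.

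For the quadratic/error nonlinearities I again split at $t/2$. On $[0,t/2]$ the plan is to exploit the smoothing of $e^{-(t-s)L}$ from a weak norm ($L^1$ or $L^2$) into $W^{N(|a|+2)-l,p}$ to manufacture powers of $t$, estimating the nonlinearity by the energy bounds \eqref{Main_Prop_Ass1} and, for the $\Phi\cdot\Phi$ piece, by \eqref{phi^2-Hna}. On $[t/2,t]$ one instead harvests decay from the nonlinearity itself, subdividing into $[t/2,t-1]$ --- where $\langle t-s\rangle\ge1$ permits the full $L^q\to W^{N,p}$ smoothing with the nonlinearity measured in a low-regularity, small-exponent norm --- and $[t-1,t]$, where one keeps the nonlinearity in a stronger norm and pays an integrable factor $(t-s)^{-3/4}$. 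The inputs are: $\|\phi^{(b)}\|_{L^\infty}$, $\|\nab\phi^{(b)}\|_{L^\infty}$ via \eqref{Dec_Phi}--\eqref{Dec_Phi/nab} and $\phi=\tfrac1{2i}|\nab|^{-1}(\Phi-\overline\Phi)$; the decay of $\|\nab u^{(c)}\|_{W^{\cdot,p}}$ from \eqref{Dec_v_VecFie} or (if $|c|<|a|$) from the inductive hypothesis; the $L^2$-decay $\|\nab^2 u^{(c)}\|_{H^{N(|c|+1)}}\lesssim\ep_1\langle t\rangle^{-1/2+(3+|c|)\delta}$ from \eqref{nab3_v(b,c)_Dec}; and \eqref{phi^2-Hna} for $\nab(\Phi^{(b)}\Phi^{(c)})$. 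For $u^{(b)}\cdot\nab u^{(c)}$ I would first use $\div u^{(b)}=0$ to write it as $\div(u^{(b)}\otimes u^{(c)})$ and then interpolate between \eqref{Main_Prop_Ass1} and \eqref{Dec_v_VecFie}; the error terms $Z^a(Err11+Err12)$ are cubic or higher in $\phi$ and are handled like the quadratic ones with a power of $t$ to spare.

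Two points, both already visible in Lemma~\ref{Dec_lem}, need extra care. First, the low-frequency part $P_{<0}\nab(\phi^{(b)}\nab u^{(c)})$: a gradient on low frequencies gives no gain, so I would rewrite $\nab(\phi\nab u)=\nab^2(\phi u)-\nab(\nab\phi\otimes u)$ and use the $L^{p/2}\to L^p$ (resp.\ $H^1\to L^p$) smoothing of $e^{-(t-s)L}$. Second, the self-interaction terms with $c=a$ (hence $b=0$): these cannot be absorbed by a bootstrap here, since $\nab^l v^{(a)}\in W^{N(|a|+2)-l,p}$ is the very object under estimation, so instead I would feed in the already-proven lower-order decay $\|v^{(a)}\|_{W^{N(|a|+1),p}}$ from Lemma~\ref{Dec_lem} and $\|\nab^2 u^{(a)}\|_{H^{N(|a|+1)}}$ from \eqref{nab3_v(b,c)_Dec}, paired with $\|\phi\|_{L^\infty}+\|\nab\phi\|_{L^\infty}\lesssim\ep_1\langle t\rangle^{-1/2+2\delta}$.

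The split into two regimes in the statement reflects precisely this mechanism: when $|a|\le N_1-2$ every factor $\Phi^{(b)}$, $\nab u^{(c)}$ admits the strong pointwise bounds \eqref{Dec_Phi}--\eqref{Dec_Phi/nab} (the former holds only up to order $N_1-2$), and collecting decays yields $\langle t\rangle^{-5/4+(4+|a|)\delta}$; when $|a|=N_1-1$ or $N_1$ some factor must sit at the top two orders, where only the energy/$L^2$ bounds --- decaying at rate at most $\langle t\rangle^{-1/2}$ --- are available, which costs $\langle t\rangle^{1/4}$ and gives only $\langle t\rangle^{-1+(6+|a|)\delta}$. I expect the main obstacle to be the derivative bookkeeping in $\nab(\phi^{(b)}\nab u^{(c)})$ for $l=2$: once two additional derivatives land on $u^{(c)}$ one is driven onto the slower $L^2$-decay \eqref{nab3_v(b,c)_Dec}, so the subdivision $[t/2,t-1]\cup[t-1,t]$ and the low-frequency rewriting must be balanced so that the surviving power of $t$ still beats $\langle t\rangle^{-5/4}$ (resp.\ $\langle t\rangle^{-1}$); everything else is a routine rerun of the time-integral splitting already carried out in Lemma~\ref{Dec_lem}.
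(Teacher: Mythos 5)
Your proposal follows essentially the same route as the paper's proof: induction on $|a|$, Duhamel's formula with the semigroup decay \eqref{Dec_heat}, the split at $s=t/2$, the use of \eqref{Dec_Phi}--\eqref{Dec_Phi/nab} and Lemma \ref{Dec_lem} for the quadratic terms, and the pairing of $\lV\phi\rV_{L^\infty}$ with the $L^2$-decay \eqref{nab3_v(b,c)_Dec} of $\nab^2 u^{(a)}$ for the self-interaction $c=a$; the loss from $\langle t\rangle^{-5/4}$ to $\langle t\rangle^{-1}$ at the top orders is identified for the same reason as in the paper. The only inessential deviations are that the low-frequency rewriting of $\nab(\phi\nabla u)$ is not actually needed here (the target norm $W^{N(|a|+2)-l,p}$ leaves enough room to absorb all $l+1$ outer derivatives) and that the data term should be dispatched with $L^2\to L^p$ rather than $L^1\to L^p$ smoothing, since only $H^N$ control of the data is assumed.
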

\begin{proof}
	\emph{Step 1: Proof of (\ref{Dec_d2v_Bet}).} We prove the bound (\ref{Dec_d2v_Bet}) by induction. Assume that
	\begin{equation}    \label{dlv_ass}
	\lV \nab^l v^{(c)} \rV_{W^{N(|a|+2)-l,p}} \lesssim \ep_1\<t\>^{-5/4+(4+|c|)\delta},\ \ \mathrm{for}\ |c|<|a|.
	\end{equation}
	Then by (\ref{Dec_heat}) and Duhamel's formula, it suffices to prove that
	\begin{gather}
	\lV \int_0^t e^{-(t-s)L} \nab^{l+2} u^{(c)}\rV_{W^{N(|a|+2)-l,p}}\lesssim \ep_1^2 t^{-5/4+(4+|a|)\delta},\ \mathrm{for}\ |c|<|a|,\label{dec_dlv_2}\\
	\label{dec_dlv_3}
	\lV \int_0^t e^{-(t-s)L} \nab^{l+1} (\phi^{(b)}\nab u^{(c)})\rV_{W^{N(|a|+2)-l,p}}\lesssim \ep_1^2 \<t\>^{-5/4+(4+|c|)\delta},\\
	\lV \int_0^t e^{-(t-s)L} \nab^l ( u^{(b)}\cdot \nab u^{(c)})\rV_{W^{N(|a|+2)-l,p}}\lesssim \ep_1^2 \<t\>^{-5/4+\delta}, \label{dec_dlv_4}
	\end{gather}
	and
	\begin{gather}\label{dec_dlv_5}
	\lV \int_0^t e^{-(t-s)L} \nab^{l+1}(\nab\phi^{(b)}\nab\phi^{(c)})ds\rV_{W^{N(|a|+2)-l,p}}\lesssim \ep_1^2 t^{-5/4},\ \mathrm{for}\ |a|\leq N_1-2.
	\end{gather}
	
	Next, we prove the above four bounds respectively. From (\ref{Dec_heat}), (\ref{Main_Prop_Ass1}) and the assumption (\ref{dlv_ass}) we have
	\begin{equation*}
	\begin{aligned}
	{\rm LHS}(\ref{dec_dlv_2})\lesssim & \int_0^{t/2}(t-s)^{-3/4+3/2p-l/2-1} \lV u^{(c)}\rV_{H^{N(|a|+2)}} ds+\int_{t/2}^t \<t-s\>^{-1}\lV\nab^l u^{(c)}\rV_{W^{N(|a|+2)+1,p}} ds\\
	\lesssim &\ep_1 t^{-3/4-l/2+3/2p}+\ep_1 t^{-5/4+(4+|c|)\delta}\log t
	\lesssim  \ep_1 t^{-5/4+(4+|a|)\delta}.
	\end{aligned}
	\end{equation*}
	This implies the bound (\ref{dec_dlv_2}).
	
	For the bound (\ref{dec_dlv_3}) with $|b|+|c|\leq N_1-2$. Using (\ref{Dec_heat}) and (\ref{Main_Prop_Ass1}), we have
	\begin{equation}        \label{phi.u-5/4}
	\begin{aligned}
	& \int_0^{t/2}\lV e^{-(t-s)L} \nab^{l+1} (\phi^{(b)}\nab u^{(c)})\rV_{W^{N(|a|+2)-l,p}}ds\\
	\lesssim &\int_0^{t/2} (t-s)^{-5/4+3/2p-l/2} \lV \phi^{(b)}\nab u^{(c)}\rV_{H^{N(|a|+2)}}ds\\
	\lesssim & t^{-5/4+3/2p-l/2} \int_0^{t/2}  \lV\phi^{(b)}\rV_{L^{\infty}}\lV\nab u^{(c)}\rV_{H^{N(|a|+2)}}+\lV\phi^{(b)}\rV_{\dot{H}^{N(|a|+2)}}\lV\nab u^{(c)}\rV_{L^{\infty}}ds\\
	\lesssim & t^{-5/4+3/2p-l/2} \int_0^{t/2} \lV\Phi^{(b)}\rV_{H^{N(|a|+2)}}\lV\nab u^{(c)}\rV_{H^{N(|a|+2)}}ds
	\lesssim  t^{-5/4+3/2p+2\delta}.
	\end{aligned}
	\end{equation}
	By (\ref{Dec_heat}), (\ref{Hardy_2}), (\ref{Dec_Phi}) and (\ref{Dec_v_VecFie}), we have
	\begin{align*}        
	&\int_{t/2}^t\lV  e^{-(t-s)L} \nab^{l+1} (\phi^{(b)}\nab u^{(c)})\rV_{W^{N(|a|+2)-l,p}}ds\\
	\lesssim & \int_{t/2}^t \<t-s\>^{\frac{l+1}{2}}\lV \phi^{(b)}\nab u^{(c)}\rV_{W^{N(|a|+2)+1,p}}ds\\
	\lesssim & \log t \sup_{s\in[t/2,t]}( \lV \phi^{(b)}\rV_{\Lf}\lV u^{(c)}\rV_{W^{N(|a|+1),p}}+\lV \nab\phi^{(b)}\rV_{\dot{W}^{N(|a|+2),p}}\lV\nab u^{(c)}\rV_{\Lf})\\
	\lesssim & t^{\delta}(\ep_1^2\<t\>^{-1/2+2\delta-3/4+(1+|c|)\delta}+\ep_1^2\<t\>^{-1+4\delta-3/4+(1+|c|)\delta})\lesssim \ep_1^2\<t\>^{-5/4+(3+|c|)\delta}.
	\end{align*}
	Hence, the bound (\ref{dec_dlv_3}) follows. 
	
	Next, for the bound (\ref{dec_dlv_4}) with $|b|+|c|\leq N_1$. using (\ref{Dec_heat}), (\ref{Main_Prop_Ass1}) and (\ref{Dec_v_VecFie}), we have
	\begin{align*}
	&\int_0^t\lV  e^{-(t-s)L}  \nab^l(u^{(b)}\cdot\nab u^{(c)})\rV_{W^{N(|a|+2)-l,p}}ds\\
	\lesssim &\int_0^{t/2} (t-s)^{-3/4+3/2p-l/2}\lV  u^{(b)}\cdot\nab u^{(c)}\rV_{H^{N(|a|+2)}} ds+\int_{t/2}^t \<t-s\>^{\frac{l+1}{2}}\lV u^{(b)}u^{(c)}\rV_{W^{N(|a|+2)+1,p}}ds\\
	\lesssim & \<t\>^{-5/4+\delta}\int_0^{t/2}\lV\nab u^{(b)}\rV_{H^{N(a)}}\lV\nab u^{(c)}\rV_{H^{N(a)}}ds+\log t \sup_{s\in[t/2,t]} \lV  u^{(b)}(s)\rV_{W^{N(|a|+1),p}} \lV u^{(c)}(s)\rV_{W^{N(|a|+1),p}}\\
	\lesssim &  \ep_1^2t^{-5/4+\delta}+\ep_1^2\<t\>^{-3/2+(3+|a|)\delta}
	\lesssim \ep_1^2 t^{-5/4+\delta},
	\end{align*}

	Finally, for the bound (\ref{dec_dlv_5}) with $|b|+|c|\leq N_1-2$. By (\ref{Dec_heat}), (\ref{Main_Prop_Ass1}) and (\ref{Dec_Phi}) we have
	\begin{equation*}
	\begin{aligned}
	&\int_0^t\lV  e^{-(t-s)L} \nab^{l+1} (\nab\phi^{(b)}\nab\phi^{(c)})\rV_{W^{N(|a|+2)-l,p}}ds\\
	\lesssim &\int_0^{t/2} (t-s)^{-\frac{3}{2}+\frac{3}{2p}-\frac{l+1}{2}}\lV \Phi^{(b)}\Phi^{(c)}\rV_{W^{N(|a|+2),1}}ds+\int_{t/2}^{t-1} \<t-s\>^{-(l+1)/2}\lV \Phi^{(b)}\Phi^{(c)}\rV_{W^{N(|a|+2)-l,p}} ds\\
	&+\int_{t-1}^t (t-s)^{-1/2}\lV \Phi^{(b)}\Phi^{(c)}\rV_{W^{N(|a|+2),p}} ds\\
	\lesssim & \ep_1^2 t^{-5/4}+ \ep_1^2t^{-2+8\delta}\log t\lesssim \ep_1^2 t^{-5/4}.
	\end{aligned}
	\end{equation*}
	Hence, the bound (\ref{dec_dlv_5}) follows. This completes the proof of (\ref{Dec_d2v_Bet}).

	\emph{Step 2: Proof of (\ref{Dec_nab2v_VF}).} We prove (\ref{Dec_nab2v_VF}) by induction. From (\ref{Dec_d2v_Bet}) we may assume that
	\begin{equation}    \label{d2v_ass}
	\lV \nab^l v^{(c)} \rV_{W^{N(|c|+2)-l.p}} \lesssim \ep_1\<t\>^{-1+(6+|c|)\delta},\ \ \mathrm{for}\ |c|<|a|.
	\end{equation}
	By the assumption and (\ref{Dec_heat}), we have the bounds (\ref{dec_dlv_4}) and 
	\begin{equation*}
	\lV \int_0^t e^{-(t-s)L} \nab^{l+2} u^{(c)}\rV_{W^{N(|a|+2)-l,p}}\lesssim \ep_1^2 t^{-1+(6+|a|)\delta},\ \mathrm{for}\ |c|<|a|\leq N_1.
	\end{equation*}
	Then by Duhamel's formula, it suffices to prove that for any $|b|+|c|\leq |a|$
	\begin{gather}         	\label{J32}
	\int_0^t\lV e^{-(t-s)L} \nab^{l+1} (\phi^{(b)}\nab u^{(c)})\rV_{W^{N(|a|+2)-l,p}}ds\lesssim \ep_1^2 \<t\>^{-1+(6+|a|)\delta},\\ \label{J33}
	\lV \int_0^t e^{-(t-s)L} \nab^{l+1}(\nab\phi^{(b)}\nab\phi^{(c)})ds\rV_{W^{N(|a|+2)-l,p}}\lesssim 	\ep_1^2 t^{-1+4\delta},\ \mathrm{for}\ N_1-1\leq|a|\leq N_1,
	\end{gather}
	
	For the first bound (\ref{J32}), if $|c|<|a|$, using (\ref{Dec_heat}) and (\ref{d2v_ass}) we have
	\begin{align*}
	&\int_{t/2}^t\lV  e^{-(t-s)L} \nab^{l+1} (\phi^{(b)}\nab u^{(c)})\rV_{W^{N(|a|+2)-l,p}}ds\\
	\lesssim &\int_{t/2}^t \<t-s\>^{-(l+1)/2-3/2p} \lV\phi^{(b)}\nab u^{(c)}\rV_{W^{N(|a|+2)+4,p/2}}ds\\
	\lesssim  &\sup_{s\in[t/2,t]}\lV\Phi^{(b)}(s)\rV_{H^{N(|a|+1)}}\lV\nab^l u^{(c)}(s)\rV_{W^{N(|a|+1),p}}\lesssim \ep_1^2 t^{\ka(b)\delta-1+(6+|c|)\delta}\lesssim \ep_1^2 t^{-1+(6+|a|)\delta};
	\end{align*}
	If $c=a$, we have to use the bounds (\ref{Hardy_1}), (\ref{nab3_v(b,c)_Dec}) and (\ref{Dec_Phi/nab}),
	\begin{align*}
	&\int_{t/2}^t\lV  e^{-(t-s)L} \nab^{l+1} (\phi^{(b)}\nab u^{(c)})\rV_{W^{N(|a|+2)-l,p}}ds\\
	\lesssim &\int_{t/2}^t \<t-s\>^{-(l+1)/2}\lV\phi\nab u^{(a)}\rV_{W^{N(|a|+2)+1,p}} ds\\
	\lesssim &\log t \sup_{s\in[t/2,t]} (\sum_{k\geq 0} 2^{N(|a|+2)k}\lV P_k\Phi(s)\rV_{p}+\lV\phi(s)\rV_{\infty})\lV \nab^2 u^{(a)}(s)\rV_{H^{N(|a|+1)}}\\
	\lesssim &t^{\delta}(\ep_1t^{-1+4\delta}+\ep_1t^{-1/2+2\delta})\ep_1t^{-1/2+(3+|a|)\delta}\lesssim \ep_1^2 t^{-1+(6+|a|)\delta}.
	\end{align*}
	which, together with (\ref{phi.u-5/4}), gives the bound (\ref{J32}). 
	
	For the second bound (\ref{J33}) with $|a|=N_1-1\ or\ N_1$, from (\ref{Dec_heat}), (\ref{Hardy_1}), (\ref{Main_Prop_Ass1}) and (\ref{phi^2-Hna}) we have
	\begin{align*}
	&\int_0^t\lV  e^{-(t-s)L} \nab^l \d_j(\nab\phi^{(b)}\d_j\phi^{(c)})\rV_{W^{N(|a|+2)-l,p}}ds\\
	\lesssim &\int_0^{t/2} (t-s)^{-\frac{3}{2}+\frac{3}{2p}-\frac{l+1}{2}}\lV \Phi^{(b)}\Phi^{(c)}\rV_{W^{N(|a|+2),1}}ds
	+\int_{t/2}^t \<t-s\>^{-3/4+3/2p-(l+1)/2}\lV \Phi^{(b)}\Phi^{(c)}\rV_{H^{N(|a|+1)}} ds\\
	\lesssim & \ep_1^2 t^{-1}+ \ep_1^2 t^{-1+\ka(|b|+2)\delta+\delta+\ka(c)\delta}\lesssim \ep_1^2 t^{-1+4\delta}.
	\end{align*}
	This concludes the bound (\ref{Dec_nab2v_VF}).

\end{proof}

As a consequence of (\ref{Dec_d2v_Bet}) and (\ref{Dec_nab2v_VF}), we obtain the following estimates.
\begin{lemma}  
	With the notations and hypothesis in Proposition \ref{Main_Prop}. For any $t\in [0,T]$ and $p=\frac{3}{2\delta}$, we have
	\begin{gather}      \label{Dec_dtv_VF-bett}
	\lV \d_t v^{(a)}\rV_{W^{N(|a|+2)-2,p}}\lesssim\ep_1\<t\>^{-5/4+(4+|a|)\delta},\ \ \mathrm{if}\ |a|\leq N_1-2,\\          \label{Dec_dtv_VF}
	\lV \d_t v^{(a)}\rV_{W^{N(|a|+2)-2,p}}\lesssim\ep_1\<t\>^{-1+(6+|a|)\delta},\ \ \mathrm{if}\ N_1-1\leq|a|\leq N_1.
	\end{gather}
\end{lemma}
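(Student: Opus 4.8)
The plan is to read $\d_t v^{(a)}$ off directly from the evolution equation for $v^{(a)}$ in (\ref{Main_Sys_VecFie}) and to bound the $W^{N(|a|+2)-2,p}$-norm of each term on the right-hand side separately, the two target rates being dictated respectively by (\ref{Dec_d2v_Bet}) when $|a|\le N_1-2$ and by (\ref{Dec_nab2v_VF}) when $N_1-1\le|a|\le N_1$. Throughout I will use that $\U$ (see (\ref{U})) and the Leray projection $\P$ are order-zero Fourier multipliers, hence bounded on every $W^{s,p}$ with $1<p<\infty$, so they may be discarded at the cost of a harmless constant.

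First I would handle the linear part. Since the operator $L$ in (\ref{L-def}) is second order with constant coefficients, $\lV Lv^{(a)}\rV_{W^{N(|a|+2)-2,p}}\lesssim\lV\nab^2v^{(a)}\rV_{W^{N(|a|+2)-2,p}}$, which is exactly what (\ref{Dec_d2v_Bet})/(\ref{Dec_nab2v_VF}) supply. The commutator term $\U\P L_1(u)$ is, by its very definition, a finite sum of second-order operators applied to $v^{(c)}$ with $|c|<|a|$; I would bound it by $\sum_{|c|<|a|}\lV\nab^2v^{(c)}\rV_{W^{N(|a|+2)-2,p}}$ and invoke the same two lemmas with strictly smaller index, which leaves room to spare in both regimes. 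These two contributions already produce the claimed rates, so everything remaining must be shown to decay at least as fast as $\<t\>^{-5/4+(4+|a|)\delta}$ (resp. $\<t\>^{-1+(6+|a|)\delta}$).

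Next I would run through the quadratic terms $u^{(b)}\cdot\nab u^{(c)}$, $\d_j(\nab\phi^{(b)}\d_j\phi^{(c)})$ and $\d_j(\phi\otimes\nab u)$ (expanded by the Leibniz rule into $\d_j(\phi^{(b)}\nab u^{(c)})$-type pieces), together with the error terms $Err11$, $Err12$. For each I would apply the Hardy-type product inequalities of Lemma \ref{Hardy}, always placing the factor carrying the most vector fields in the lower-regularity slot, and then feed in the decay estimates already available: (\ref{Dec_Phi}) and (\ref{Dec_Phi/nab}) for $\phi$ in $L^\infty$, (\ref{phi^2-Hna}) for $\nab\phi\,\nab\phi$ in $H^{N(a)}$, (\ref{Dec_v_VecFie}) for $v$ in $W^{\cdot,p}$, and (\ref{nab3_v(b,c)_Dec}) for $\nab^2u^{(a)}$ in $L^2$ when a single factor carries all of $a$. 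Because $N(|a|+2)=N(|a|+1)-h<N(|a|+1)$, there is enough spare regularity to absorb the two derivatives lost in passing from $v$ to $\d_t v$, and each quadratic term comes out $\lesssim\ep_1^2\<t\>^{-3/4+\cdots}$ or better — in particular $\lesssim\ep_1^2\<t\>^{-1+4\delta}$ for the $\nab\phi\,\nab\phi$ piece via (\ref{phi^2-Hna}). The error terms are cubic or higher in $(\phi,\nab\phi)$ with an extra $\nab u$ factor, so they obey strictly stronger bounds and are handled exactly as their quadratic analogues.

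The one delicate point — the only place the argument is not purely mechanical — is the borderline case $c=a$ in the $\d_j(\phi^{(b)}\nab u^{(c)})$ piece (and in $u^{(b)}\cdot\nab u^{(c)}$), where the whole string of vector fields sits on the velocity factor and one cannot afford to spend too many derivatives on it. Here I would reuse the device already employed in the proof of Lemma \ref{d2v_Hn_lem}: estimate $\lV\phi\rV_{L^\infty}\lesssim\ep_1\<t\>^{-1/2+2\delta}$ from (\ref{Dec_Phi/nab}) and pair it with $\lV\nab^2u^{(a)}\rV_{H^{N(|a|+1)}}\lesssim\ep_1\<t\>^{-1/2+(3+|a|)\delta}$ from (\ref{nab3_v(b,c)_Dec}), which gives the required rate in the low-index regime, while for $N_1-1\le|a|\le N_1$ one uses (\ref{Hardy_1}) rather than (\ref{Hardy_2}) so as not to lose in the high-high frequency interaction. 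Collecting all the pieces, the slowest-decaying contribution is the linear term $\lV\nab^2v^{(a)}\rV_{W^{N(|a|+2)-2,p}}$, so the final bounds (\ref{Dec_dtv_VF-bett}) and (\ref{Dec_dtv_VF}) follow.
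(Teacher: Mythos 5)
Your overall strategy is the intended one: the paper states this lemma without proof as ``a consequence of (\ref{Dec_d2v_Bet}) and (\ref{Dec_nab2v_VF})'', and reading $\d_t v^{(a)}$ off the $v^{(a)}$-equation in (\ref{Main_Sys_VecFie}), bounding $Lv^{(a)}$ and $\U\P L_1(u)$ by those two estimates with $l=2$, and estimating the nonlinearity pointwise in time is exactly what is meant. The treatment of the linear and commutator terms is fine.

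There is, however, a quantitative gap in your treatment of the quadratic terms: the rates you actually claim for them do not reach the target rates. You assert each quadratic term is ``$\lesssim\ep_1^2\<t\>^{-3/4+\cdots}$ or better --- in particular $\lesssim\ep_1^2\<t\>^{-1+4\delta}$ for the $\nab\phi\,\nab\phi$ piece via (\ref{phi^2-Hna})'', and then conclude that the linear term is the slowest contribution. But the targets are $\<t\>^{-5/4+(4+|a|)\delta}$ in (\ref{Dec_dtv_VF-bett}) and $\<t\>^{-1+(6+|a|)\delta}$ in (\ref{Dec_dtv_VF}); since $\delta=10^{-10}$, a term decaying only like $\<t\>^{-3/4}$ or $\<t\>^{-1+4\delta}$ is \emph{slower} than $\<t\>^{-5/4}$, so with the bounds as you state them it is the quadratic terms, not $\nab^2 v^{(a)}$, that dominate, and (\ref{Dec_dtv_VF-bett}) does not follow. (This is why the analogous equation-reading in Step~2 of Lemma \ref{d2v_Hn_lem} can afford to cite (\ref{phi^2-Hna}): its target there is only $\<t\>^{-1/2+\cdots}$.)

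The fix is to pair decaying factors on \emph{both} sides of each product. For $\d_j(\nab\phi^{(b)}\d_j\phi^{(c)})$ with $|a|\le N_1-2$ one has $|b|,|c|\le N_1-2$, so both factors enjoy the pointwise decay (\ref{Dec_Phi}); putting one in frequency-weighted $L^\infty$ (rate $\<t\>^{-1+O(\delta)}$) and the other in $L^p$ by interpolating $\lV P_k\Phi^{(c)}\rV_{L^p}\lesssim\lV P_k\Phi^{(c)}\rV_{L^2}^{2/p}\lV P_k\Phi^{(c)}\rV_{L^\infty}^{1-2/p}$ (as in the proof of (\ref{Dec_Phi/nab})) gives $\<t\>^{-2+O(\delta)}$, which suffices; for $N_1-1\le|a|\le N_1$ the factor carrying the large index is placed in $W^{\cdot,p}$ via Bernstein from $H^{N(c)}$, costing only $\<t\>^{O(\delta)}$, and the product still decays like $\<t\>^{-1+O(\delta)}$. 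Similarly, for $u^{(b)}\cdot\nab u^{(c)}$ you should combine $\lV u^{(b)}\rV_{L^\infty}\lesssim\ep_1\<t\>^{-3/4+\cdots}$ with $\lV\nab u^{(c)}\rV_{W^{N(|a|+2)-1,p}}\lesssim\ep_1\<t\>^{-5/4+\cdots}$ from (\ref{Dec_d2v_Bet}) with $l=1$ (resp.\ $\<t\>^{-1+\cdots}$ from (\ref{Dec_nab2v_VF})), rather than stopping at $\<t\>^{-3/4}$. With these pairings every nonlinear contribution is strictly faster than the linear one and the stated rates close.
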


Finally, we prove the following $H^n$-norm estimates.
\begin{lemma}         \label{dt_v_2norm_Lem}
	With the notations and hypothesis in Proposition \ref{Main_Prop}. For any $t\in[0,T]$, $|a|\leq N_1$, we have
	\begin{gather} \label{H^n_dtv}
	\lV\d_t v^{(a)}\rV_{H^n}\lesssim \<t\>^{2\delta}\sum_{|c|\leq |a|}\lV \nab u^{(c)}\rV_{H^{n+1}}+\ep_1^2\<t\>^{-1+5\delta},\ \ \mathrm{for}\ n\leq N(|a|)-1,\\   \label{H^n-1_dttphi^a}
	\lV\d_{t} \Psi^{(a)}\rV_{H^n}+\lV\nab^2 \phi^{(a)}\rV_{H^n}\lesssim \ep_1\<t\>^{4\delta}(1+\sum_{|c|\leq |a|}\lV \nab u^{(c)}\rV_{H^{n+1}}), \ \mathrm{for}\ n=N(|a|)-1,N(|a|),\\
	 \label{dt(Psi)_2norm_dec}
	\lV\d_t \Psi^{(a)}\rV_{H^{N(|a|)-3}}\lesssim \ep_1^2 \<t\>^{-3/4+(|a|+1)\delta}.
	\end{gather}
\end{lemma}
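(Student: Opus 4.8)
The three bounds all follow by substituting the evolution equations of \eqref{Main_Sys_VecFie} and estimating the resulting nonlinearities; the only algebraic ingredient needed is the identity $\d_t\Psi^{(a)}=e^{-it|\nab|}(\d_t^2\phi^{(a)}-\Delta\phi^{(a)})$, immediate from $\Psi^{(a)}=e^{-it|\nab|}\Phi^{(a)}$ and $\Phi^{(a)}=(\d_t+i|\nab|)\phi^{(a)}$. Since $e^{-it|\nab|}$ is unitary on $H^n$, it turns $\lV\d_t\Psi^{(a)}\rV_{H^n}$ into $\lV\d_t^2\phi^{(a)}-\Delta\phi^{(a)}\rV_{H^n}$, i.e.\ the $H^n$-norm of the right-hand side $F^{(a)}$ of the $\phi^{(a)}$-equation; and since $\d_t^2-\Delta$ commutes with $\d_t,\d_i,\tilde\Om_i$ and satisfies $(\d_t^2-\Delta)S=(S+2)(\d_t^2-\Delta)$, the quantity $F^{(a)}$ is a sum of products of $u^{(b)},\d_t u^{(b)},\phi^{(c)},\d_t\phi^{(c)}$ and their gradients with $|b|+|c|(+|e|)\le|a|$ carrying \emph{no} extra weights in $t$ (the shifts $S\mapsto S+2$ only produce constant coefficients). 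The same distribution over products applies to the $v^{(a)}$-equation after applying $Z^a$.

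For \eqref{H^n_dtv} I would write $\d_t v^{(a)}=-Lv^{(a)}-\U\P L_1(u)-\U\P(\text{quadratic})-(Err11+Err12)^{(a)}$. The operators $L$ and $L_1$ are compositions of zero-order Fourier multipliers with second-order constant-coefficient operators acting on $u^{(c)}$, $|c|\le|a|$ (the $S^i\Gamma^{a'}u$ inside $L_1$ is itself one of the $u^{(c)}$, so no $t$-weight appears), hence their $H^n$-norm is $\lesssim\sum_{|c|\le|a|}\lV\nab u^{(c)}\rV_{H^{n+1}}$, legitimate since $n+1\le N(|a|)\le N(|c|)$. Each product in the quadratic terms (and likewise in \eqref{H^n-1_dttphi^a} and \eqref{dt(Psi)_2norm_dec}) I would split by the Hardy-type inequalities of Lemma \ref{Hardy}, placing in $L^\infty$ the factor with fewer vector fields — estimated by Sobolev embedding together with the $W^{N,p}$-decay bounds \eqref{Dec_v_VecFie}, \eqref{Dec_d2v_Bet}, \eqref{Dec_nab2v_VF}, \eqref{Dec_dtv_VF-bett}, \eqref{Dec_dtv_VF} for $v,\d_t v$ and \eqref{Dec_Phi}, \eqref{Dec_Phi/nab} for $\Phi$ — and in $L^2$ the factor with more vector fields, estimated by the bootstrap \eqref{Main_Prop_Ass1} and the product bound \eqref{phi^2-Hna}, after writing $\nab\phi^{(c)}=\tfrac1{2i}\tfrac{\nab}{|\nab|}(\Phi^{(c)}-\overline{\Phi^{(c)}})$, $\d_t\phi^{(c)}=\Re\Phi^{(c)}$. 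In \eqref{H^n_dtv} the only factor that cannot be given $L^\infty$-decay is $\phi^{(b)}$ with $|b|$ large, where $\lV\phi^{(b)}\rV_{L^\infty}\lesssim\ep_1\<t\>^{\ka(b)\delta}\le\ep_1\<t\>^{2\delta}$; this gives the $\<t\>^{2\delta}\sum_{|c|\le|a|}\lV\nab u^{(c)}\rV_{H^{n+1}}$ term, while $\d_j(\nab\phi^{(b)}\d_j\phi^{(c)})$ and every genuinely cubic-or-higher contribution (the error terms expanded by Leibniz, using the integral-remainder structure built into $Err11,Err12,Err2$ and smallness of $\phi$) is $\lesssim\ep_1^2\<t\>^{-1+5\delta}$.

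For \eqref{dt(Psi)_2norm_dec} the three-derivative drop to level $N(|a|)-3$ leaves room to put the decaying factor in $\dot W^{N(|a|)-3,\infty}$: writing $N=N(|a|)$, the worst term $\d_t u^{(b)}\cdot\nab\phi^{(c)}$ is $\lesssim\lV\d_t u^{(b)}\rV_{\dot W^{N-3,\infty}\cap L^\infty}\lV\nab\phi^{(c)}\rV_{H^{N-3}}+\lV\d_t u^{(b)}\rV_{H^{N-3}}\lV\nab\phi^{(c)}\rV_{L^\infty}$, and combining \eqref{Dec_dtv_VF-bett}/\eqref{Dec_dtv_VF}, \eqref{H^n_dtv} (to bound $\lV\d_t u^{(b)}\rV_{H^{N-3}}$ by the bootstrap quantities), \eqref{Dec_Phi}, \eqref{Main_Prop_Ass1} and tracking $\ka(c)\le2$, $|b|\le|a|$ lands at $\ep_1^2\<t\>^{-3/4+(|a|+1)\delta}$; the remaining terms $u^{(b)}\cdot\nab\d_t\phi^{(c)}$, $u^{(b)}\cdot\nab(u^{(c)}\cdot\nab\phi^{(e)})$ and $(S+2)^{a_1}\Gamma^{a'}Err2$ contain more velocity and decay faster. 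In \eqref{H^n-1_dttphi^a} at the top levels $n=N(|a|)-1,N(|a|)$ there is no cushion, so one extracts no pointwise decay from the velocity: one simply keeps $\lV\nab u^{(c)}\rV_{H^{n+1}}$ on the right (matching the dissipation norm) and extracts only the slow growth $\<t\>^{\ka\delta}\le\<t\>^{2\delta}$ or the partial $\<t\>^{-1/2}$ of the $\phi$-factors. The $\nab^2\phi^{(a)}$-bound at $n=N(|a|)-1$ is immediate from $\lV\nab^2\phi^{(a)}\rV_{H^{N(|a|)-1}}\lesssim\lV\nab\phi^{(a)}\rV_{H^{N(|a|)}}\lesssim\lV\Phi^{(a)}\rV_{H^{N(|a|)}}$ and \eqref{Main_Prop_Ass1}; at $n=N(|a|)$ one instead uses the wave equation $\Delta\phi^{(a)}=\d_t^2\phi^{(a)}-F^{(a)}$ to trade the spatial derivatives for time derivatives, with $\d_t^2\phi^{(a)}=\Re(i|\nab|\Phi^{(a)}+F^{(a)})$, the resulting loss being absorbed exactly by the $\sum_{|c|\le|a|}\lV\nab u^{(c)}\rV_{H^{n+1}}$ term.

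The step I expect to be the main obstacle is the bookkeeping of Sobolev indices in the Leibniz sums: one must check, case by case, that the number of derivatives plus vector fields landing on whichever factor is placed in $L^\infty$ stays inside the range in which the corresponding decay estimate of Section~3 was proved — this is exactly why $N_0,N_1,h$ are chosen so large — that in \eqref{dt(Psi)_2norm_dec} the margin of three derivatives is never exceeded, and that in the top-order combinations of \eqref{H^n-1_dttphi^a} the only high-norm quantity that survives is $\sum_{|c|\le|a|}\lV\nab u^{(c)}\rV_{H^{n+1}}$. No dispersion or resonance analysis enters here, only the already-established decay estimates, the bootstrap assumptions, and the Hardy-type product inequalities.
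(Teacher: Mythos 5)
Your overall strategy is the same as the paper's: substitute the evolution equations of (\ref{Main_Sys_VecFie}), use the identity $\d_t\Psi^{(a)}=e^{-it|\nab|}(\d_t^2-\Delta)\phi^{(a)}$ (which is correct), and estimate each product by the Hardy-type inequalities of Lemma \ref{Hardy}, placing the factor with fewer vector fields in $L^\infty$ via the Section~3 decay bounds and the other in $L^2$ via the bootstrap. For (\ref{H^n_dtv}) and (\ref{dt(Psi)_2norm_dec}) this is essentially the paper's proof (the paper likewise isolates $\nab(\phi^{(b)}\nab u^{(c)})$ as the source of the $\<t\>^{2\delta}$ factor, via $\lV\Phi^{(b)}\rV_{H^n}\lesssim\ep_1\<t\>^{2\delta}$, and runs the same $|b|$ versus $|c|$ case analysis for (\ref{dt(Psi)_2norm_dec})). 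One minor imprecision: the exponent $-3/4$ in (\ref{dt(Psi)_2norm_dec}) is forced by $u^{(b)}\cdot\nab\d_t\phi^{(c)}$, where $u^{(b)}$ only decays like $\<t\>^{-3/4+\cdots}$ in $L^\infty$ by (\ref{Dec_v_VecFie}); it is not true that the terms other than $\d_t u^{(b)}\cdot\nab\phi^{(c)}$ ``decay faster.''

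There is, however, a genuine gap in your treatment of (\ref{H^n-1_dttphi^a}) at the top level $n=N(|a|)$. Your proposed device --- use $\Delta\phi^{(a)}=\d_t^2\phi^{(a)}-F^{(a)}$ and then $\d_t^2\phi^{(a)}=\Re\big(i|\nab|\Phi^{(a)}\big)+F^{(a)}$ --- is circular: since $\Im\Phi^{(a)}=|\nab|\phi^{(a)}$, one has $\Re\big(i|\nab|\Phi^{(a)}\big)=-|\nab|^2\phi^{(a)}=\Delta\phi^{(a)}$, so the second identity is just the wave equation restated and substituting it back yields $\Delta\phi^{(a)}=\Delta\phi^{(a)}$. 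Quantitatively, bounding $\lV\nab^2\phi^{(a)}\rV_{H^{N(|a|)}}$ (equivalently $\lV|\nab|\Phi^{(a)}\rV_{H^{N(|a|)}}$), or the top-order piece $u\cdot\nab\d^{N(|a|)}\d_t\phi^{(a)}$ of $\lV u^{(b)}\cdot\nab\d_t\phi^{(c)}\rV_{H^{N(|a|)}}$ with $c=a$, requires $\lV\Phi^{(a)}\rV_{H^{N(|a|)+1}}$, which is one derivative beyond what the bootstrap (\ref{Main_Prop_Ass1}) controls; the loss cannot be ``absorbed by $\sum_{|c|\le|a|}\lV\nab u^{(c)}\rV_{H^{n+1}}$,'' which is a velocity norm. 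The $n=N(|a|)-1$ case, which you handle correctly and which is the one actually invoked later (e.g.\ in (\ref{II*1})), does not see this problem. To be fair, the paper itself disposes of (\ref{H^n-1_dttphi^a}) in one sentence and supplies no argument for the $n=N(|a|)$ case either, but your proposal does not close it, and you should either restrict to $n\le N(|a|)-1$ or find a genuinely new input for the extra derivative.
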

\begin{proof}First, we prove (\ref{H^n_dtv}). In fact, we have from $u^{(a)}$-equation in (\ref{Main_Sys_VecFie}) that 	\begin{align*}
	\lV\d_t v^{(a)}\rV_{H^n}\lesssim &\sum_{|c|\leq |a|}\lV \nab u^{(c)}\rV_{H^{n+1}}+\sum_{|b|+|c|\leq |a|}\big[\lV u^{(b)}\cdot\nab u^{(c)}\rV_{H^n}+\lV \d_j(\phi^{(b)}\nab u^{(c)})\rV_{H^n}\\
	&+\lV\d_j(\nab\phi^{(b)}\d_j\phi^{(c)})\rV_{H^n}\big]+\sum_{|c|\leq |a|}\lV \P Z^c (Err11+Err12)\rV_{H^n}.
	\end{align*}
	Moreover, from (\ref{Main_Prop_Ass1}) it follows that
	\begin{equation}
	\lV u^{(b)}\cdot\nab u^{(c)}\rV_{H^n}\lesssim \lV u^{(b)}\rV_{H^n}\lV\nab u^{(c)}\rV_{H^n}\lesssim \ep_1 \lV\nab u^{(c)}\rV_{H^n},
	\end{equation}
	and
	\begin{align*}
	\lV \nab(\phi^{(b)}\nab u^{(c)})\rV_{H^n}\lesssim & \lV \nab(\phi^{(b)}\nab u^{(c)})\rV_{L^2}+\lV \d_j(\phi^{(b)}\nab u^{(c)})\rV_{\dot{H}^n}\\
	\lesssim & \lV\nab\phi^{(b)}\rV_{H^2}\lV\nab u^{(c)}\rV_{H^1}+\lV\phi^{(b)}\rV_{\dot{H}^{n+1}}\lV \nab u^{(c)}\rV_{L^{\infty}}+\lV\phi^{(b)}\rV_{L^{\infty}}\lV\nab u^{(c)}\rV_{\dot{H}^{n+1}}\\
	\lesssim & \lV \Phi^{(b)}\rV_{H^n}\lV \nab u^{(c)}\rV_{H^{n+1}}\\
	\lesssim & \ep_1 t^{2\delta}\lV \nab u^{(c)}\rV_{H^{n+1}}.
	\end{align*}
	The term $\lV Z^a Err11\rV_{H^n}$ can be estimated similarly. Finally, $\lV\P\d_j(\nab\phi^{(b)}\d_j\phi^{(c)})\rV_{H^n}$ has beed estimated in (\ref{phi^2-Hna}), and the term $\lV Z^a Err12\rV_{H^n}$ can be estimated using similar argument. Hence, the bound (\ref{H^n_dtv}) follows.
	
	Second, we prove (\ref{H^n-1_dttphi^a}) and (\ref{dt(Psi)_2norm_dec}). The bound (\ref{H^n-1_dttphi^a}) can be obtained directly from (\ref{H^n_dtv}), (\ref{Main_Sys_VecFie}) and (\ref{Main_Prop_Ass1}). Then we prove the bound (\ref{dt(Psi)_2norm_dec}). In view of (\ref{Main_Sys_VecFie}), it suffices to prove that
	\begin{equation}       \label{dtPsi_Dec}
	\begin{aligned}
	&\sum_{b+c=a}\big(\lV \d_t u^{(b)}\cdot\nab \phi^{(c)}\rV_{H^{N(a)-3}}+\lV  u^{(b)}\cdot\nab \d_t\phi^{(c)}\rV_{H^{N(a)-3}}\big)\\
	&+\sum_{b+c+e=a}\lV u^{(b)}\cdot\nab(u^{(c)}\cdot\nab \phi^{(e)})\rV_{H^{N(a)-3}}+\lV Z^a Err2\rV_{H^{N(a)-3}}\lesssim \ep_1^2\<t\>^{-3/4+(|a|+1)\delta}.
	\end{aligned}
	\end{equation}
	Here we only estimate the first term $\d_t u^{(b)}\cdot\nab \phi^{(c)}$ in detail, the other terms in (\ref{dtPsi_Dec}) are similar.
	
	\emph{Case 1: $|a|\geq N_1-1$.}
	
	If $|b|\leq |c|$, using (\ref{Dec_dtv_VF-bett}) and (\ref{Main_Prop_Ass1}), it follows that
	\begin{equation}\label{dtPsi_N1-1_b<c}
	\begin{aligned}
	\lV \d_t u^{(b)}\cdot\nab \phi^{(c)}\rV_{H^{N(a)-3}}\lesssim &(\sum_{k\geq 0}2^{(N(a)-3)k^+}\lV \d_t P_k u^{(b)}\rV_{L^{\infty}}+\lV \d_t u^{(b)}\rV_{L^{\infty}})\lV\Phi^{(c)}\rV_{H^{N(a)-3}}\\
	\lesssim &\ep_1\<t\>^{\ka(c)\delta}\lV \d_t u^{(b)}\rV_{W^{N(|b|+2)-2,p}}\\
	\lesssim & \ep_1\<t\>^{2\delta}\lV \d_t v^{(b)}\rV_{W^{N(|b|+2)-2,p}}\\
	\lesssim &\ep_1^2 \<t\>^{-1}.
	\end{aligned}
	\end{equation}
	If $|b|>|c|$, it follows from (\ref{Dec_Phi}) and (\ref{H^n_dtv}) that
	\begin{equation}     \label{dtPsi_N1-1_b>c}
	\lV \d_t u^{(b)}\cdot\nab \phi^{(c)}\rV_{H^{N(a)-3}}\lesssim \lV \d_t u^{(b)}\rV_{H^{N(a)-3}}\sum_k 2^{(N(a)-3)k^+}\lV\Phi^{(c)}\rV_{L^{\infty}}\lesssim \ep_1^2\<t\>^{-1+5\delta}.
	\end{equation}
	
	\emph{Case 2: $|a|\leq N_1-2$.}
	
	Using (\ref{Dec_dtv_VF}), (\ref{Dec_Phi}), (\ref{H^n_dtv}) and (\ref{Main_Prop_Ass1}), we get
	\begin{equation}      \label{dtPsi_N1-2}
	\lV \d_t u^{(b)}\cdot\nab \phi^{(c)}\rV_{H^{N(a)-3}}\lesssim \lV \d_t u^{(b)}\rV_{H^{N(a)-3}}\lV \Phi^{(c)}\rV_{L^{\infty}}+\lV \d_t u^{(b)}\rV_{L^{\infty}}\lV\Phi^{(c)}\rV_{H^{N(a)-3}}\lesssim \ep_1^2\<t\>^{-1+5\delta}.
	\end{equation}
	
	From the above two cases, the bound (\ref{dtPsi_Dec}) for the first term follows. This completes the proof of the Lemma.

\end{proof}

\section{Energy estimates}
In this section we prove the energy bounds (\ref{Main_Prop_result1}).
\subsection{The bound on $v$ and $\Phi$} We start with the Sobolev bound in (\ref{Main_Prop_result1}).

\begin{proposition}            \label{Prop_Ene_Sob}
	With the notation and hypothesis in Proposition $\ref{Main_Prop}$, for any $t\in[0,T]$, we have
	\begin{equation*}
	\lV v\rV_{H^{N(0)}}^2+\int_0^t \lV\nab v\rV_{H^{N(0)}}^2 ds+\lV\Phi\rV_{H^{N(0)}}^2\lesssim \epsilon_0^2.
	\end{equation*}
\end{proposition}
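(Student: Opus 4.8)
The plan is to run a standard energy estimate for the coupled parabolic-hyperbolic system \eqref{Main_Sys-v} at the top regularity level $N(0)=N_0$, exploiting the dissipation of $v$ and the fact that the hyperbolic part for $\phi$ is ``null'' enough to be controlled by already-established decay. First I would test the $v$-equation with $|\nab|^{2N(0)} v$ (equivalently sum Littlewood--Paley pieces $\sum_k 2^{2N(0)k}\langle P_k v, \partial_t P_k v\rangle$) and the $\phi$-equation with $\partial_t |\nab|^{2N(0)}\phi$, so that the left-hand side produces
\[
\tfrac12\tfrac{d}{dt}\big(\lV v\rV_{H^{N(0)}}^2 + \lV \Phi\rV_{H^{N(0)}}^2\big) + c\,\lV\nab v\rV_{H^{N(0)}}^2,
\]
using that $L$ is a positive elliptic operator of order $2$ under the coefficient condition \eqref{mu1mu2} (the coercivity being exactly the two inequalities proved in the Decay estimates lemma), and that the wave operator $\partial_t^2-\Delta$ contributes the conserved $\lV\Phi\rV_{H^{N(0)}}^2$. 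The Leray projection $\P$ and the change of variable $\U$ are bounded on every $H^s$, so they cost nothing. Then I would integrate in time on $[0,t]$, bound the initial data by \eqref{MainAss_ini}, and show every nonlinear contribution is $\lesssim \epsilon_1^3 + \epsilon_1 \int_0^t \lV \nab v\rV_{H^{N(0)}}^2\,ds$ with a small constant, so the dissipative term absorbs the last piece and a continuity/bootstrap closes the bound by $\epsilon_0^2$ (since $\epsilon_1=\epsilon_0^{2/3}$, $\epsilon_1^3=\epsilon_0^2$).

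The nonlinear terms split into three families. For the $v$-equation: the Navier--Stokes term $\U\P(\U v\cdot\nab\U v)$ is handled as usual, putting the derivative-loaded factor in $L^2$ and the other in $L^\infty$ via the decay \eqref{Dec_v_VecFie}, so its time integral is $\lesssim \epsilon_1\int_0^t\lV\nab v\rV_{H^{N(0)}}^2 + \epsilon_1^3$; the term $\sum_j\partial_j(\nab\phi\,\partial_j\phi)$ pairs against $v$, integrate the $\partial_j$ by parts onto $v$ and use \eqref{phi^2-Hna} to get a factor $\langle t\rangle^{-1+4\delta}$ which is time-integrable against $\lV\nab v\rV_{H^{N(0)}}$; the stress term $\nab\otimes(\phi\otimes\nab u)$ after the by-parts pairing becomes $\phi\,\nab u\cdot\nab v$-type, bounded by $\lV\phi\rV_{L^\infty}\lV\nab u\rV_{H^{N(0)}}\lV\nab v\rV_{H^{N(0)}}\lesssim \epsilon_1\langle t\rangle^{-1/2+2\delta}\lV\nab v\rV_{H^{N(0)}}^2$ using \eqref{Dec_Phi/nab}, again absorbed/integrable; the error terms $Err11,Err12$ are cubic-or-higher and handled the same way. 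For the $\phi$-equation the delicate terms are $\partial_t u\cdot\nab\phi$, $u\cdot\nab\partial_t\phi$ and $u\cdot\nab(u\cdot\nab\phi)$; pairing with $\partial_t\phi=\Re\Phi$ one uses $\div u=0$ to move a derivative off $\partial_t\phi$ in the middle term, and then the decay of $\partial_t v$ in $L^\infty$ from \eqref{Dec_dtv_VF-bett}--\eqref{Dec_dtv_VF}, of $v$ in $L^\infty$ from \eqref{Dec_v_VecFie}, and of $\nab^2 v$ in $L^2$ from \eqref{nab3_v(b,c)_Dec}, each giving an integrable-in-time factor times $\lV\Phi\rV_{H^{N(0)}}^2\lesssim\epsilon_1^2$; $Err2$ is cubic and similar.

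The main obstacle will be the term $u\cdot\nab\partial_t\phi$ in the $\phi$-equation (this is the ``third difficulty'' flagged in the introduction): the naive pairing $\langle |\nab|^{N(0)}(u\cdot\nab\partial_t\phi),\,|\nab|^{N(0)}\partial_t\phi\rangle$ loses a full derivative on $\partial_t\phi$ and neither the decay of $u$ nor that of $\nab\phi$ is strong enough on its own. The resolution is to use $\div u=0$ to rewrite $\langle u\cdot\nab f, f\rangle=0$ for the top-order symmetric piece (after commuting $|\nab|^{N(0)}$ through $u\cdot\nab$ via a Coifman--Meyer/Kato--Ponce commutator estimate), leaving only commutator terms in which all derivatives land on $u$ or are spread so that $\nab u\in H^{N(0)}$ appears; those are then controlled by $\lV\nab u\rV_{H^{N(0)}}$ paired with $\lV\nab_{t,x}\phi\rV_{H^{N(0)}}\lesssim\epsilon_1$ and a decay factor for the remaining low-regularity $u$ factor, giving a term $\lesssim \epsilon_1\lV\nab v\rV_{H^{N(0)}}^2 + \epsilon_1^2\langle t\rangle^{-3/4+\cdots}$ that the dissipation absorbs. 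I would also double-check that the energy functional is genuinely coercive, i.e.\ that the lower-order commutator operator $L_1(u)$ does not appear here (it only enters at $|a|\ge1$), so at $|a|=0$ the estimate is clean.
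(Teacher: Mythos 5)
Your overall architecture matches the paper's: a coercive energy functional built from $\lV\d^n v\rV_{L^2}^2$, the dissipative term $\int_0^t\lV \d^n L^{1/2}v\rV_{L^2}^2$, and $\lV\d^n\Phi\rV_{L^2}^2$ for $|n|\in\{0,N(0)\}$, with the $v$-nonlinearities, the cubic terms and $Err2$ handled essentially as you describe. However, there is a genuine gap in your treatment of the quadratic term $\d_t u\cdot\nab\phi$ in the $\phi$-equation. You propose to control it by the $L^\infty$ decay of $\d_t v$ from (\ref{Dec_dtv_VF-bett})--(\ref{Dec_dtv_VF}) times $\lV\Phi\rV_{H^{N(0)}}^2$, but this only covers the Leibniz terms in which few derivatives fall on $\d_t u$. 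The extreme term $\d^n\d_t u\cdot\nab\phi$ with $|n|=N(0)$ requires $\lV\d_t u\rV_{\dot H^{N(0)}}$, and from the parabolic equation $\d_t u=-\P\tilde L u+\cdots$ this costs $\lV u\rV_{H^{N(0)+2}}$ (or, in an $L^2_t$ sense, $\lV\nab u\rV_{H^{N(0)+1}}$), neither of which is controlled by the bootstrap hypothesis (\ref{Main_Prop_Ass1}); note that (\ref{H^n_dtv}) is only valid for $n\leq N(|a|)-1$, i.e.\ strictly below top order. Moving one spatial derivative off $\d^n\d_t u$ by parts does not help either, since it lands on $\d^n\d_t\phi$ and produces the uncontrolled $\lV\d_t\phi\rV_{\dot H^{N(0)+1}}$.

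The paper's resolution, which is absent from your proposal, is a normal-form step: write $\d_s u\cdot\nab\phi+2u\cdot\nab\d_s\phi=\d_s(u\cdot\nab\phi)+u\cdot\nab\d_s\phi$ and integrate the first piece by parts \emph{in time}. The boundary term is $O(\ep_1^3)$, the term $u\cdot\nab\d_s\phi$ is handled by the $\div u=0$ antisymmetry plus commutators exactly as you describe, and the remaining term $\int_0^t\int\d^n\d_s^2\phi\cdot\d^n(u\cdot\nab\phi)$ is treated by substituting the wave equation for $\d_s^2\phi$: the leading contribution $\int\d^n\Delta\phi\cdot\d^n(u\cdot\nab\phi)$ is integrated by parts once in $x$, after which the top-order piece again cancels by $\div u=0$ and the rest is controlled by $\lV\nab u\rV_{H^{N(0)}}$ (absorbed via Cauchy--Schwarz in time against the dissipation) and the decay (\ref{Dec_Phi}), (\ref{Dec_d2v_Bet}). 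In short, you correctly flag $u\cdot\nab\d_t\phi$ and handle it, but the real top-order obstruction in this proposition is $\d_t u\cdot\nab\phi$, and without the integration by parts in time your estimate does not close.
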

\begin{proof}
From (\ref{mu1mu2}) and (\ref{L-def}), the operator $L^{1/2}$ can be defined as
\begin{equation*}
L^{1/2}:=\text{diag} \Big\{\Big[\frac{\nu_4+\nu_5}{2}|\nab|^2+\nu_1 \frac{\d_1^2(\d_2^2+\d_3^2)}{|\nab|^2}\Big]^{\frac{1}{2}},\Big[\frac{\nu_4+\nu_5}{2}|\nab|^2+\nu_1 \frac{\d_1^2(\d_2^2+\d_3^2)}{|\nab|^2}\Big]^{\frac{1}{2}},\Big(\frac{\nu_4}{2}|\nab|^2-\nu_5 \frac{\d_1^2}{2}\Big)^{\frac{1}{2}}\Big\}.
\end{equation*}
Then we define the energy functional
\begin{equation*}
E^0(t)=\sum\limits_{|n|\in\{0,N(0)\}}\big(\frac{1}{2}\lV \d^{n} v \rV_{L^2}^2 +\int_0^t \lV \d^{n}L^{1/2} v \rV_{L^2}^2 ds +\frac{1}{2}\lV \d^{n}\Phi \rV_{L^2}^2\big).
\end{equation*}
Recall the system (\ref{Main_Sys}), we have
\begin{equation}       \label{dtE0}
\begin{aligned}
	\frac{d}{dt}E^0(t)=&\sum_{|n|\in\{0,N(0)\}}\int_{\R^3}\d^n v\cdot \d^n \U\P(-u\cdot\nab u-\d_j(\nab\phi\d_j\phi+\phi\nab u)+Err11+Err12)dx\\
	&- \sum_{|n|\in\{0,N(0)\}}\int_{\R^3} \d^n \d_t\phi\cdot \d^n (\d_t u\cdot\nab\phi+2u\cdot\nab\d_t\phi+u\cdot\nab(u\cdot\nab\phi)+Err2)dx.
\end{aligned}
\end{equation}
Next, we begin to estimate the right-hand side of (\ref{dtE0}) and finish the proof of the proposition.

\emph{Step 1: We prove the bound }
\begin{equation}         \label{E0_I1}
I_1:=\int_0^t \int_{\R^3} \d^n v\cdot \d^n\U\P(u\cdot\nab u+\nab(\nab\phi\nab\phi+\phi\nab u)+Err11+Err12)dxds\lesssim \ep_1^3.
\end{equation}
Using integration by parts, Sobolev embedding, (\ref{Main_Prop_Ass1}) and (\ref{Dec_Phi}), it follows that
\begin{align*}
I_1=&\int_0^t \int_{\R^3} \d^n v\cdot\d^n\U\P(u\cdot\nab u)-\d_j \d^nv \cdot\d^n\U\P(\nab\phi\d_j\phi+\phi\nab u+\d_j^{-1}(Err11+Err12))dxds\\
\lesssim & \int_0^t \lV v\rV_{H^n}\lV\nab u\rV_{H^n}^2+\lV\nab v\rV_{H^n}\lV\Phi\rV_{H^n}(\lV\Phi\rV_{L^{\infty}}+\lV\nab u\rV_{H^n})ds\\
\lesssim &\ep_1 \lV\nab u\rV_{L^2([0,t]:H^n)}^2+\ep_1\int_0^t \lV\nab u\rV_{H^n}\ep_1\<s\>^{-1+2\delta}ds
\lesssim  \ep_1^3.
\end{align*}

\emph{Step 2: We prove the bound}
\begin{equation}    \label{E0_I2}
I_2:=\int_0^t\int_{\R^3} \d^n \d_s\phi\cdot \d^n(\d_s u\cdot\nab \phi+2u\cdot\nab \d_s\phi)dxds\lesssim \ep_1^3.
\end{equation}
Integration by parts in time gives
\begin{equation*}
I_2 =\int_{\R^3}\d^n \d_s\phi\cdot \d^n(u\cdot\nab\phi)dx\big|_0^t+\int_0^t\int_{\R^3} \d^n \d_s\phi\cdot \d^n( u\cdot\nab \d_s\phi)dxds-\int_0^t\int_{\R^3} \d^n \d_s^2\phi\cdot \d^n( u\cdot\nab \phi)dxds.
\end{equation*}
By (\ref{Dec_Phi}), (\ref{Dec_d2v_Bet}) and $\div u=0$, the first two terms can be estimated by
\begin{align*}
&\int_{\R^3}\d^n \d_s\phi\cdot \d^n(u\cdot\nab\phi)dx\big|_0^t+\int_0^t\int_{\R^3} \d^n \d_s\phi\cdot \d^n( u\cdot\nab \d_s\phi)dxds\\
\lesssim& \lV\Phi\rV_{H^n}^2\lV u\rV_{H^n}+\int_0^t\int_{\R^3} \d^n \d_s\phi\cdot \sum_{n_1+n_2=n,n_2<n}(\d^{n_1} u\cdot\nab \d^{n_2}\d_s\phi)dxds\\
\lesssim & \ep_1^3+\int_0^t\lV\Phi\rV_{H^n}(\lV\<\nab\>^{|n|/2}\nab u\rV_{L^{\infty}}\lV\Phi\rV_{H^n}+\lV\nab u\rV_{H^n}\lV\<\nab\>^{|n|/2}\Phi\rV_{L^{\infty}})ds\\
\lesssim &\ep_1^3+\ep_1\int_0^t \big(\ep_1^2\<s\>^{-5/4+4\delta}+\lV\nab u\rV_{H^n}\<s\>^{-1+2\delta}\big)ds\\
\lesssim & \ep_1^3 .
\end{align*}
For the third term, it follows from (\ref{Main_Sys}) that
\begin{align*}
&\int_0^t\int_{\R^3} \d^n \d_s^2\phi\cdot \d^n( u\cdot\nab \phi)dxds\\
= & \int_0^t\int_{\R^3} \d^n \Delta\phi\cdot \d^n(u\cdot\nab\phi) dxds-\int_0^t\int_{\R^3} \d^n \d_s(u\cdot\nab\phi)\cdot \d^n(u\cdot\nab\phi) dxds\\
&-\int_0^t\int_{\R^3} \d^n (u\cdot\nab\d_s\phi)\cdot \d^n(u\cdot\nab\phi) dxds-\int_0^t\int_{\R^3} \d^n (u\cdot\nab(u\cdot\nab\phi))\cdot \d^n(u\cdot\nab\phi) dxds\\
&+\int_0^t\int_{\R^3} \d^n Err2\cdot \d^n(u\cdot\nab\phi) dxds\\
=&I_{21}+I_{22}+I_{23}+I_{24}+I_{25}.
\end{align*}
By integration by parts in $x$, (\ref{Main_Prop_Ass1}), (\ref{Dec_d2v_Bet}) and (\ref{Dec_Phi}), we have
\begin{align*}
I_{21}=&\int_0^t\int_{\R^3} -\d^n \d_i\phi\cdot \d^n(u\cdot\nab\d_i\phi+\d_i u\cdot\nab\phi)dxds\\
= & \int_0^t\int_{\R^3} -\sum_{n_1+n_2=n,n_2<n}\d^n\d_i\phi\cdot(\d^{n_1}u\cdot\nab\d^{n_2}\d_i\phi)-\d^n\d_i\phi\cdot\d^n(\d_i u\cdot\nab\phi)dxds\\
\lesssim & \int_0^t \lV\d^n\d_i\phi\rV_{L^2}(\lV\<\nab\>^{|n|/2}\nab u\rV_{L^{\infty}}\lV\Phi\rV_{H^n}+\lV\nab u\rV_{H^n}\lV\<\nab\>^{|n|/2}\Phi\rV_{L^{\infty}})ds\\
\lesssim &\ep_1^3.
\end{align*}
Similarly, using $\text{div}u=0$, (\ref{Main_Prop_Ass1}), (\ref{Dec_d2v_Bet}) and (\ref{Dec_Phi}) it's also easy to obtain
\begin{align*}
I_{22}+I_{24}=&-\frac{1}{2} \lV\d^n(u\cdot\nab\phi)\rV_{L^2}^2\Big|_0^t
-\int_0^t\int_{\R^3} \sum_{n_2<n}\d^{n_1}u\cdot\nab\d^{n_2}(u\cdot\nab\phi) \cdot\d^n(u\cdot\nab\phi)dxds\\
\lesssim & \sup_{s\in[0,t]}\lV u(s)\rV_{H^n}^2\lV\Phi(s)\rV_{H^n}^2+\int_0^t \lV  u\rV_{H^n}\lV \nab u\rV_{H^n}^2\lV\Phi\rV_{H^n}^2ds
\lesssim  \ep_1^4.
\end{align*}
and
\begin{equation*}
I_{25}\lesssim \int_0^t \lV\Phi\rV_{H^n}^3(1+\lV u\rV_{H^n}^2)(\lV\<\nab\>^{|n|/2}\nab u\rV_{L^{\infty}}\lV\Phi\rV_{H^n}+\lV\nab u\rV_{H^n}\lV\<\nab\>^{|n|/2}\Phi\rV_{L^{\infty}})ds\lesssim \ep_1^5.
\end{equation*}
Finally, by integration by parts, (\ref{Main_Prop_Ass1}) and (\ref{Dec_dtv_VF-bett}) we have
\begin{align*}
I_{23}=& -\int_0^t\int_{\R^3} \d^n(u\cdot\nab \d_s\phi)\cdot (u\cdot\nab\d^n\phi)+\sum_{n_2<n}\d^n(u\cdot\nab \d_s\phi)\cdot (\d^{n_1}u\cdot\nab\d^{n_2}\phi)dxds\\
=&-\int_0^t\int_{\R^3} \frac{1}{2}\d_s(u\cdot\nab\d^n\phi)^2-(\d_s u\cdot\nab\d^n\phi) \cdot (u\cdot\nab\d^n\phi)+\sum_{n_1+n_2=n,n_2<n}(\d^{n_1}u\cdot\nab\d^{n_2}\d_s\phi)\cdot(u\cdot\nab\d^n\phi)\\
&-\sum_{n_1+n_2=n,n_2<n}\d^{n-1}(u\cdot\nab\d_s\phi)\cdot\d (\d^{n_1}u\cdot\nab\d^{n_2}\phi)dxds\\
\lesssim & \sup_{s\in[0,t]}\lV u(s)\rV_{H^{|n|}}^2\lV \Phi(s)\rV_{H^{|n|}}^2+\int_0^t \big(\lV\d_s u\rV_{L^{\infty}}\lV \nab u\rV_{H^{|n|}}\lV\Phi\rV_{H^{|n|}}^2+\lV \nab u\rV_{H^{|n|}}^2\lV \Phi\rV_{H^{|n|}}^2\big)ds\\
\lesssim &\ep_1^4.
\end{align*}
Hence, the desired bound (\ref{E0_I2}) follows.

\emph{Step 3: We prove the bound}
\begin{equation}     \label{E0_I4}
I_4:=\int_0^t\int_{\R^3}\d^n\d_s\phi\cdot\d^n(u\cdot\nab(u\cdot\nab\phi))dxds\lesssim \ep_1^4.
\end{equation}
By integration by parts, we have
\begin{align*}
I_4=&\int_0^t\int_{\R^3} \d^n\d_s\phi\cdot(u\cdot\nab(u\cdot\nab\d^n\phi))+\sum_{n_1+n_2+n_3=n,n_3<n}\d^n\d_s\phi\cdot(\d^{n_1}u\cdot\nab(\d^{n_2}u\cdot\nab\d^{n_3}\phi))dxds\\
=:&I_{41}+I_{42}.
\end{align*}
The term $I_{42}$ can be estimated directly using (\ref{Main_Prop_Ass1}). From $\div u=0$, $I_{41}$ can be further rewritten as
\begin{align*}
I_{41}=&-\int_0^t\int_{\R^3} (u\cdot\nab\d^n\d_s\phi) \cdot (u\cdot\nab\d^n\phi)dxds\\
=& -\int_0^t\int_{\R^3} \frac{1}{2}\d_s (u\cdot\nab\d^n\phi)^2-(\d_s u\cdot\nab\d^n\phi)\cdot (u\cdot\nab\d^n\phi)dxds.
\end{align*}
From this and (\ref{Dec_dtv_VF-bett}) we obtain
\begin{equation*}
I_{41}\lesssim \sup_{s\in[0,t]} \lV u(s)\rV_{H^{|n|}}^2\lV\Phi(s)\rV_{H^{|n|}}^2+\int_0^t \lV\d_s u\rV_{L^{\infty}}\lV\nab u\rV_{H^{|n|}}\lV\Phi\rV_{H^{|n|}}^2ds\lesssim \ep_1^4.
\end{equation*}
The bound (\ref{E0_I4}) is obtained.

\emph{Step 4: We prove the bound}
\begin{equation}      \label{E0_I5}
I_5:=\int_0^t\int_{\R^3}\d^n\d_s\phi \cdot \d^n Err2(s)dxds\lesssim \ep_1^4.
\end{equation}
In fact, this is a consequence of
\begin{align*}
\lV\d^n Err2(s)\rV_{L^2}\lesssim \lV\Phi\rV_{H^{|n|}} \lV\Phi\rV_{L^{\infty}}\lV |\nab|^{-1}\Phi\rV_{L^{\infty}}\lesssim \ep_1^3\<s\>^{-5/4},
\end{align*}
which is given by  (\ref{Dec_Phi/nab}).

From (\ref{E0_I1})-(\ref{E0_I5}) and the assumption (\ref{MainAss_ini}), we have
\begin{equation*}
E^0(t)=E^0(0)+\int_0^t \d_s E^0(s)ds \lesssim \ep_0^2+\ep_1^3\lesssim \ep_0^2,
\end{equation*}
which completes the proof of the Proposition.

\end{proof}

\subsection{The bound on $v^{(a)}$ and $\Phi^{(a)}$}

\subsubsection{The bound on $v^{(a)}$}

\begin{proposition}     \label{Prop_va}
	With the notations and hypothesis in Proposition $\ref{Main_Prop}$, for any $t\in[0,T]$, $1\leq |a|\leq N_1$,
	\begin{equation}       \label{Ev}
	\lV   v^{(a)}\rV_{H^{N(a)}}^2+\int_0^t \lV \nab v^{(a)}\rV_{H^{N(a)}}^2 ds\lesssim \ep_0^2.
	\end{equation}
\end{proposition}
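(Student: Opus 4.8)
The plan is to close a vector-field energy estimate for the $v^{(a)}$-equation in (\ref{Main_Sys_VecFie}), driven by the parabolic dissipation of $v^{(a)}$. By (\ref{mu1mu2}) the symbol of $L$ is bounded below by $c_0|\xi|^2$ for some $c_0>0$, so $\langle w,Lw\rangle=\lV L^{1/2}w\rV_{L^2}^2\geq c_0\lV\nab w\rV_{L^2}^2$ and $\sum_{|n|\in\{0,N(a)\}}\lV\d^n L^{1/2}w\rV_{L^2}^2\gtrsim\lV\nab w\rV_{H^{N(a)}}^2$. I would set, for $1\leq|a|\leq N_1$,
\[
E^a(t):=\sum_{|n|\in\{0,N(a)\}}\Big(\tfrac12\lV\d^n v^{(a)}\rV_{L^2}^2+\int_0^t\lV\d^n L^{1/2}v^{(a)}\rV_{L^2}^2\,ds\Big),
\]
so that $E^a(0)\lesssim\ep_0^2$ by (\ref{MainAss_ini}), $\lV v^{(a)}(t)\rV_{H^{N(a)}}^2+\int_0^t\lV\nab v^{(a)}\rV_{H^{N(a)}}^2\,ds\lesssim E^a(t)$, and, pairing $\d^n v^{(a)}$ with $\d^n$ of the equation and summing over $|n|\in\{0,N(a)\}$, $E^a(t)=E^a(0)+\int_0^t R(s)\,ds$ where the rate $R$ collects the single linear commutator term $-\sum_n\langle\d^n v^{(a)},\d^n\U\P L_1(u)\rangle$ and the contributions of the nonlinear right-hand side of (\ref{Main_Sys_VecFie}). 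The whole argument is an induction on $|a|$, the base case $|a|=1$ relying on Proposition \ref{Prop_Ene_Sob}, which supplies $\int_0^t\lV\nab v\rV_{H^{N(0)}}^2\,ds\lesssim\ep_0^2$.

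The commutator term $\U\P L_1(u)$ involves only $\nab^2 u^{(c)}$ with $|c|<|a|$; writing $u^{(c)}=\U^{-1}v^{(c)}$, integrating one derivative by parts and applying Cauchy--Schwarz, its contribution to $\int_0^t R$ is $\lesssim\eta\int_0^t\lV\nab v^{(a)}\rV_{H^{N(a)}}^2\,ds+C_\eta\sum_{|c|<|a|}\int_0^t\lV\nab v^{(c)}\rV_{H^{N(c)}}^2\,ds$. I would choose $\eta$ a small fraction of $c_0$ and absorb the first term against the coercivity of $E^a$, while the second is $\lesssim\ep_0^2$ by the induction hypothesis (by Proposition \ref{Prop_Ene_Sob} when $|a|=1$). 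It is this term, linear in the lower-order unknowns $v^{(c)}$, that makes the induction necessary.

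The nonlinear terms — $\sum_{b+c=a}u^{(b)}\cdot\nab u^{(c)}$, $\sum_{b+c=a}\d_j(\nab\phi^{(b)}\d_j\phi^{(c)})$, the $\div\sigma$-term $\nab\otimes(\phi\otimes\nab u)$, and $Z^a(Err11+Err12)$ — I would estimate by frequency-localized product inequalities combined with the Section 3 decay bounds, targeting $O(\ep_1^3)=O(\ep_0^2)$ plus terms absorbable into the dissipation. For $\sum\d_j(\nab\phi^{(b)}\d_j\phi^{(c)})$ one integrates $\d_j$ by parts onto $\d^n v^{(a)}$ and uses (\ref{phi^2-Hna}), so the contribution is $\lesssim\int_0^t\lV\nab v^{(a)}\rV_{H^{N(a)}}\,\ep_1^2\<s\>^{-1+4\delta}\,ds\lesssim\ep_1^3$ by Cauchy--Schwarz in $s$. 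For a quadratic piece $w_1^{(b)}w_2^{(c)}$ with $b\neq0$, at least one factor carries strictly fewer vector fields, hence decays in $L^\infty$ by Lemmas \ref{d2v_Hn_lem}, \ref{Dec_lem} and \ref{Dec_lem_VF}; I would place that factor, together with the $2^{N(a)k^+}$ Sobolev weights, in $L^\infty$ and the other in $\dot H^{N(a)}$ — the derivative count closes because $N(a)\leq N(c')$ whenever $|c'|\leq|a|$ — which produces a time-integrable product of size $O(\ep_1^3)$. Whenever a factor $\phi^{(b)}$ sits on the low-vector-field side I would split $\phi^{(b)}=P_{\leq0}\phi^{(b)}+P_{>0}\phi^{(b)}$: the low-frequency part is controlled through $\lV|\nab|^{-1}\Phi^{(b)}\rV_{L^\infty}\lesssim\ep_1\<t\>^{-1/2+2\delta}$ from (\ref{Dec_Phi/nab}), its contribution being $\leq\ep_1\int_0^t\lV\nab v^{(a)}\rV_{H^{N(a)}}^2\,ds$ which is absorbed into the dissipation, while the high-frequency part decays like $\<t\>^{-1+\delta}$ upon summing (\ref{Dec_Phi}). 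The error terms are cubic or higher in $(\phi,u)$ and are handled the same way with room to spare.

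The step I expect to be the main obstacle is the $\div\sigma$-term $\nab\otimes(\phi\otimes\nab u)$, schematically $\sum_{b+c=a}\d_j(\phi^{(b)}\nab u^{(c)})$: for $c\neq a$ it is covered above, but for $c=a$ (all vector fields on $u$) it is genuinely quasilinear — $\nab u^{(a)}$ must carry $N(a)+1$ derivatives, exactly the regularity the bootstrap provides through $\lV\nab v^{(a)}\rV_{L^2([0,t]:H^{N(a)})}$, while $\phi=\phi^{(0)}$ decays only like $\<t\>^{-1/2}$ at low frequencies, which is not time-integrable. My plan is to integrate the outer $\d_j$ by parts onto $\d^n v^{(a)}$ (so $u^{(a)}$ enters only through $\lV\nab v^{(a)}\rV_{H^{N(a)}}$) and split $\phi$ as before: the low-frequency piece contributes $\leq\ep_1\int_0^t\lV\nab v^{(a)}\rV_{H^{N(a)}}^2\,ds$, absorbed into the dissipation, and the high-frequency piece uses $\lV P_{>0}\phi\rV_{H^{N(a)}}\lesssim\lV\Phi\rV_{H^{N(0)}}\lesssim\ep_1$ together with the $L^\infty$-decay of $\nab u^{(a)}$ from (\ref{Dec_d2v_Bet})--(\ref{Dec_nab2v_VF}). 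In the companion pieces where $\phi^{(b)}$ or $u^{(b)}$ carries many vector fields, and is therefore only bounded with the slowly growing weight $\<t\>^{\ka(b)\delta}$, one uses that the other factor then carries very few vector fields, so its $\<t\>^{-5/4}$ (respectively $\<t\>^{-1/2}$) decay from Section 3 dominates the weight and keeps the time integral convergent. Finally, taking $\ep_0$ small enough that the total coefficient multiplying $\int_0^t\lV\nab v^{(a)}\rV_{H^{N(a)}}^2\,ds$ on the right is less than $c_0$, I would absorb that integral into the left-hand side and conclude $\lV v^{(a)}(t)\rV_{H^{N(a)}}^2+\int_0^t\lV\nab v^{(a)}\rV_{H^{N(a)}}^2\,ds\lesssim\ep_0^2+\ep_1^3\lesssim\ep_0^2$.
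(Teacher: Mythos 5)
Your proposal follows essentially the same route as the paper: the same energy functional carrying the time-integrated dissipation $\int_0^t\lV\d^nL^{1/2}v^{(a)}\rV_{L^2}^2ds$, the same induction on $|a|$ with the commutator term $L_1(u)$ handled by integration by parts, Cauchy--Schwarz and the induction hypothesis, integration by parts plus (\ref{phi^2-Hna}) for the $\nab\phi^{(b)}\nab\phi^{(c)}$ and error terms, and absorption of the quasilinear $\phi\,\nab u^{(a)}$ contribution into the dissipation. The one spot where the paper does genuinely more than your sketch is the piece $\phi^{(b)}\nab u^{(c)}$ with $|b|>|c|$ and $N_1-1\leq|a|\leq N_1$, where $\phi^{(b)}$ can be placed neither in $L^{\infty}$ (the decay estimates (\ref{Dec_Phi}), (\ref{Dec_Phi/nab}) are unavailable at that many vector fields) nor in $L^2$ (only $\nab_{t,x}\phi^{(b)}=\Phi^{(b)}$ is controlled, not $\phi^{(b)}$ itself); the paper resolves this by putting $\phi^{(b)}$ in $L^7$ via $\dot H^{15/14}\hookrightarrow L^7$ and interpolating $\nab u^{(c)}$ between the $L^2_tH^{N(a)+2}$ dissipation norm and the $W^{N(a)+2,3/(2\delta)}$ decay norm, a step your ``the other factor decays fast enough to dominate the weight'' gloss should be expanded to include.
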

\begin{proof}
We prove the bound (\ref{Ev}) by induction. From Proposition \ref{Prop_Ene_Sob}, we assume that
\begin{equation}       \label{Ev_Ass}
\lV   v^{(c)}\rV_{H^{N(|c|)}}^2+\int_0^t \lV \nab v^{(c)}\rV_{H^{N(|c|)}}^2 ds\lesssim \ep_0^2,\ \mathrm{for}\ |c|< |a|.
\end{equation}
Define the energy functional
\begin{equation*}
E^a_v(t):=\sum\limits_{|n|\in\{0,N(a)\}}\big(\frac{1}{2}\lV \d^n v^{(a)}\rV_{L^2}^2+\int_0^t \lV \d^n L^{1/2} v^{(a)}\rV_{L^2}^2 ds\big).
\end{equation*}
It follows from (\ref{Main_Sys_VecFie}) that
\begin{align*}
\frac{d}{dt}E^a_v(t)=&-\sum_{|n|\in\{0,N(a)\}}\int_{\R^3}\d^n v^{(a)}\cdot\d^n\U\P L_1(u) dx\\
&-\sum_{|n|\in\{0,N(a)\}}\sum_{b+c=a}C_{a}^b\int_{\R^3}\d^n v^{(a)}\cdot\d^n\U\P(u^{(b)}\cdot\nab u^{(c)})dx\\
&+\sum_{|n|\in\{0,N(a)\}}\int_{\R^3}\d^n v^{(a)}\cdot\d^n\U\P(S+1)^{a_1}\Gamma^{a'}(\nab(\phi\nab u)-Err11)dx\\
&-\sum_{|n|\in\{0,N(a)\}}\sum_{b+c=a}C_a^b\int_{\R^3}\d^n v^{(a)}\cdot\d^n\U\P\d_j(\nab(S-1)^{b_1}\Gamma^{b'}\phi\cdot\d_j (S-1)^{c_1}\Gamma^{c'}\phi)dx\\
&+\sum_{|n|\in\{0,N(a)\}}\int_{\R^3}\d^n v^{(a)}\cdot\d^n\U\P(S+1)^{a_1}\Gamma^{a'}Err12dx\\
=&I^a_1+I^a_2+I^a_3+I^a_4+I^a_5.
\end{align*}

For $I^a_1$, by integration by parts, H\"{o}lder and (\ref{Ev_Ass}), we have
\begin{equation*}       \label{Ev_I1}
\begin{aligned}
\int_0^t I^a_1(s)ds =& \sum_{|n|\in\{0,N(a)\}} \int_0^t\int_{\R^3} \d^{n+1}  v^{(a)} \cdot\d^{n-1}\U\P L_1(u)dxds\\
\leq &\sum\limits_{|n|\in\{0,N(a)\}} \frac{1}{2}\int_0^t \big(\lV \d^n\nab v^{(a)}\rV_{L^2}^2 +C\sum\limits_{l=0}^{|a|-1}\lV \d^n\nab u^{(l)}\rV_{L^2}^2\big) ds\\
\leq & \frac{1}{2}\sum\limits_{|n|\in\{0,N(a)\}} \int_0^t \lV \d^n\nab v^{(a)}\rV_{L^2}^2ds +C\ep_0^2.
\end{aligned}
\end{equation*}

For $I^a_2$, from (\ref{Main_Prop_Ass1}) and H\"{o}lder inequality we have
\begin{align*}      \label{Ev_I2}
\int_0^t I_2^a(s)ds\lesssim  \int_0^t \lV v^{(a)} \rV_{H^{N(a)}}\lV \nab u^{(b)}\rV_{H^{N(a)}}\lV \nab u^{(c)}\rV_{H^{N(a)}}  ds\lesssim \ep_1^3.
\end{align*}

In order to estimate $I^a_3$, it suffices to prove that
\begin{equation}   \label{Ja3}
\sum_{|n|\in\{0,N(a)\}}\int_0^t \int_{\R^3}\d^n v^{(a)}\cdot \d^n\nab\U\P(\phi^{(b)}\nab u^{(c)}) dxds\lesssim \ep_1^3,\ \mathrm{for}\ b+c=a.
\end{equation}

If $|a|\leq N_1-2$, by (\ref{Hardy_2}), (\ref{Dec_Phi/nab}) and (\ref{Dec_d2v_Bet}) we have
\begin{align*}
\lV\phi^{(b)}\nab u^{(c)}\rV_{H^{N(a)}}\lesssim & \lV\phi^{(b)}\rV_{L^{\infty}}\lV\nab u^{(c)}\rV_{H^{N(a)}}+\lV\phi^{(b)}\rV_{\dot{H}^{N(a)}}\lV\nab u^{(c)}\rV_{L^{\infty}}\\
\lesssim &\ep_1\<s\>^{-1/2+3\delta}\lV\nab u^{(c)}\rV_{H^{N(a)}}+\ep_1^2\<s\>^{-5/4+(5+|c|)\delta};
\end{align*}
If $N_1-1\leq |a|\leq N_1, |b|\leq |c|$, by (\ref{Hardy_1}) and (\ref{Dec_Phi/nab}) we have
\begin{align*}
\lV\phi^{(b)}\nab u^{(c)}\rV_{H^{N(a)}}\lesssim (\lV\phi^{(b)}\rV_{L^{\infty}}+\sum_k 2^{N(a)k}\lV P_k\phi^{(b)}\rV_{L^{\infty}})\lV\nab u^{(c)}\rV_{H^{N(a)}}\lesssim \ep_1\<s\>^{-1/2+3\delta}\lV\nab u^{(c)}\rV_{H^{N(a)}};
\end{align*}
And if $N_1-1\leq |a|\leq N_1, |b|> |c|$, by (\ref{Dec_d2v_Bet}) and (\ref{Main_Prop_Ass1}) we have
\begin{align*}
\lV\phi^{(b)}\nab u^{(c)}\rV_{H^{N(a)}}\lesssim & \lV \phi^{(b)}\rV_{\dot{H}^{N(a)}}\lV\nab u^{(c)}\rV_{\Lf}+\lV\phi^{(b)}\rV_{L^7} \lV \nab u^{(c)}\rV_{W^{N(a)+1,14/5}}\\
\lesssim &\lV\Phi^{(b)}\rV_{H^{N(a)}} \lV\nab u^{(c)}\rV_{W^{N(a)+1,14/5}}\\
\lesssim &\ep_1 \<s\>^{2\delta} \lV \nab u^{(c)}\rV_{H^{N(a)+2}}^{5/7} \lV \nab u^{(c)}\rV_{W^{N(a)+2,3/2\delta}}^{2/7}\\
\lesssim & \ep_1^{9/7}\lV \nab u^{(c)}\rV_{H^{N(a)+2}}^{5/7} \<s\>^{-5/14+4\delta}.
\end{align*}
Then using integration by parts, the above three bounds, H\"{o}lder and (\ref{Main_Prop_Ass1}), we obtain the bound (\ref{Ja3}), and hence conclude the estimate of $I^a_3$.

Finally, in order for $I^a_4$ and $I^a_5$ it suffices to prove that
\begin{equation}   \label{Ja4}
\sum_{|n|\in\{0,N(a)\}}\int_0^t \int_{\R^3}\d^n v^{(a)}\cdot \d^n\nab\U\P(\nab\phi^{(b)}\nab \phi^{(c)}) dxds\lesssim \ep_1^3,\ \ \mathrm{for}\ |b|+|c|\leq |a|,
\end{equation}
which is obtained by integration by parts, (\ref{phi^2-Hna}), H\"{o}lder and (\ref{Main_Prop_Ass1}). Hence, this concludes the estimates of $I^a_4$ and $I^a_5$.

As a consequence of the above estimates of $I^a_1,\cdots,I^a_5$ and (\ref{MainAss_ini}), we have
\begin{equation*}
E^a_v(t)=E^a_v(0)+\int_0^t \d_s E^a_v(s)ds\leq \frac{1}{2}\sum_{|n|\in\{0,N(a)\}}\int_0^t \lV\d^n\nab v^{(a)}\rV_{L^2}^2ds+C\ep_0^2,
\end{equation*}
which implies
\begin{equation*}
\sum_{|n|\in\{0,N(a)\}}(\lV\d^n v^{(a)}\rV_{L^2}^2+\int_0^t \lV\d^n\nab v^{(a)}\rV_{L^2}^2ds) \lesssim \ep_0^2.
\end{equation*}
This completes the proof of the Proposition.
\end{proof}

\subsubsection{The bound on $\Phi^{(a)}$}

\begin{proposition}     \label{Prop_Phia}
	With the notation and hypothesis in Proposition $\ref{Main_Prop}$, for any $t\in[0,T]$, $1\leq |a|\leq N_1$,
	\begin{equation}       \label{Ephi}
	\lV   \Phi^{(a)}(t)\rV_{H^{N(a)}}\lesssim \ep_0\<t\>^{\ka(a)\delta}.
	\end{equation}
\end{proposition}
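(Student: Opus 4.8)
The plan is to run an energy estimate directly on the $\phi^{(a)}$-equation in (\ref{Main_Sys_VecFie}), following the four-step scheme of the proof of Proposition~\ref{Prop_Ene_Sob} with the vector fields $Z^a$ in place and with the admissible growth $\<t\>^{2\ka(a)\delta}$ tracked throughout. I would introduce the undamped energy
\[
E^a_\phi(t):=\sum_{|n|\in\{0,N(a)\}}\tfrac12\big(\lV\d^n\d_t\phi^{(a)}\rV_{L^2}^2+\lV\d^n\nab\phi^{(a)}\rV_{L^2}^2\big),
\]
which is comparable to $\lV\Phi^{(a)}\rV_{H^{N(a)}}^2$ since the cross term between $\d_t\phi^{(a)}$ and $i|\nab|\phi^{(a)}$ is purely imaginary ($\phi^{(a)}$ being real-valued); thus it suffices to show $E^a_\phi(t)\lesssim\ep_0^2\<t\>^{2\ka(a)\delta}$. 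Differentiating in time, the linear contributions $\d^n\d_t\phi^{(a)}\cdot\d^n\Delta\phi^{(a)}$ and $\d^n\nab\phi^{(a)}\cdot\d^n\nab\d_t\phi^{(a)}$ cancel after integrating by parts in $x$, leaving $\tfrac{d}{dt}E^a_\phi(t)=\sum_{|n|\in\{0,N(a)\}}\int_{\R^3}\d^n\d_t\phi^{(a)}\cdot\d^n\mathcal{N}^{(a)}\,dx$, where $\mathcal{N}^{(a)}$ collects $\d_t u^{(b)}\cdot\nab\phi^{(c)}$ and $u^{(b)}\cdot\nab\d_t\phi^{(c)}$ ($b+c=a$), the cubic term $u^{(b)}\cdot\nab(u^{(c)}\cdot\nab\phi^{(e)})$ ($b+c+e=a$), and the transformed error $Z^a Err2$. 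Since this integrand is a product of three or more factors, each controlled by $\ep_1$ through the bootstrap hypothesis (\ref{Main_Prop_Ass1}) and the already-established Propositions~\ref{Prop_Ene_Sob} and \ref{Prop_va}, it is enough to prove $\int_0^t\tfrac{d}{ds}E^a_\phi(s)\,ds\lesssim\ep_1^3\<t\>^{2\ka(a)\delta}=\ep_0^2\<t\>^{2\ka(a)\delta}$; together with $E^a_\phi(0)\lesssim\ep_0^2$ from (\ref{MainAss_ini}) this yields (\ref{Ephi}).

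For the two quadratic terms with $b+c=a$ I would, as in Step~2 of Proposition~\ref{Prop_Ene_Sob}, integrate by parts in time in the contribution of $\d_s u^{(b)}\cdot\nab\phi^{(c)}$; this produces a boundary term (bounded by $\lV\Phi^{(a)}\rV_{H^{N(a)}}\lV u^{(b)}\rV_{H^{N(a)}}\lV\nab\phi^{(c)}\rV_{L^\infty}$ up to permutations of the three norms), a term containing $\d_s^2\phi^{(a)}$ which I replace by $\Delta\phi^{(a)}$ plus lower-order nonlinearities via the $\phi^{(a)}$-equation — so that one further integration by parts in $x$ always leaves a \emph{differentiated} velocity factor — and a residual term $\int\d^n\d_s\phi^{(a)}\cdot\d^n(u^{(b)}\cdot\nab\d_s\phi^{(c)})$. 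In that residual term, and in the top-order diagonal pieces generated by the Leibniz rule, the incompressibility $\div u^{(b)}=0$ annihilates the worst contribution $\int u^{(b)}\cdot\nab\big(\tfrac12(\d^n\d_s\phi^{(a)})^2\big)\,dx=0$ — this is essential, being the only place where a top-order orientation factor could meet another top-order orientation factor. The remaining commutator I would estimate by a Kato--Ponce inequality splitting $|n|/2$ derivatives onto each factor, invoking the decay estimates of Section~3 (Lemmas~\ref{Phi_inf}--\ref{dt_v_2norm_Lem}): differentiated velocity factors obey $\lV\<\nab\>^{|n|/2}\nab u^{(b')}\rV_{L^\infty}\lesssim\ep_1\<s\>^{-5/4+C\delta}$ (Lemma~\ref{Dec_lem_VF}), low-vector-field orientation factors obey $\lV\<\nab\>^{|n|/2}\Phi^{(c')}\rV_{L^\infty}\lesssim\ep_1\<s\>^{-1+C\delta}$ for $|c'|\le N_1-2$ (Lemma~\ref{Phi_inf}, (\ref{Dec_Phi})) and $\lV\Phi^{(c')}\rV_{H^{N(c')}}\lesssim\ep_1\<s\>^{\ka(c')\delta}$ otherwise, and a velocity factor forced to full top order $\nab u^{(c')}$ is paired off by Cauchy--Schwarz in time against $\int_0^t\lV\nab u^{(c')}\rV_{H^{N(c')}}^2\,ds\lesssim\ep_0^2$. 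The cubic term $u^{(b)}\cdot\nab(u^{(c)}\cdot\nab\phi^{(e)})$ is handled as in Step~3 using $\div u=0$, and $Z^a Err2$ as in Steps~4--5 using the polar-coordinate structure of $Err2$ (a smooth function of $\phi_2$ times quadratics in $(\dot\phi,\nab\phi)$) together with $\lV|\nab|^{-1}\Phi\rV_{L^\infty}\lesssim\ep_1\<s\>^{-1/2+2\delta}$ from (\ref{Dec_Phi/nab}). In every resulting term exactly one factor carries the growth $\<s\>^{\ka(a)\delta}$ while each other factor decays at least like $\<s\>^{-1}$ up to $\<s\>^{C\delta}$ losses, so the worst $s$-integral is $\int_0^t\ep_1^3\<s\>^{2\ka(a)\delta-1+C\delta}\,ds\lesssim\ep_1^3\<t\>^{2\ka(a)\delta}$.

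The hard part will be the ``second material derivative'' terms $u^{(b)}\cdot\nab\d_t\phi^{(c)}$ and $\d_t u^{(b)}\cdot\nab\phi^{(c)}$ with $b+c=a$ — the energy-estimate face of the third difficulty described in the introduction. A direct H\"older bound fails, because $u$ decays only like $\<s\>^{-3/4}$ in $L^\infty$, which is not time-integrable against the $\<s\>^{\ka(a)\delta}$ growth of the top-order orientation factor, and because for $|a|\in\{N_1-1,N_1\}$ the pointwise decay (\ref{Dec_Phi}) of $\Phi^{(a)}$ is unavailable. The remedy is the combination above — integration by parts in time, substitution of the wave equation for $\d_s^2\phi^{(a)}$, the cancellation $\div u=0$ at top derivative order, and the improved $\<s\>^{-5/4+C\delta}$ decay of $\nab u$ carrying many derivatives — together with a split on $|a|$: for $|a|\le N_1-2$ one places $\Phi^{(a)}$ in $L^\infty$ via (\ref{Dec_Phi}), whereas for $|a|\in\{N_1-1,N_1\}$ one keeps $\Phi^{(a)}$ in $L^2$ at the cost of $\<s\>^{\ka(a)\delta}$ and spends the surplus regularity on the velocity factors. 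In the sub-case where both $b$ and $c$ carry vector fields with $|c|$ large, one moreover needs the interpolation device from the proof of Proposition~\ref{Prop_va}, interpolating a velocity norm between a high-order Sobolev norm with $L^2$-in-time control from the dissipation and a low-order $L^p$ norm with pointwise-in-time decay. The remaining work is bookkeeping: checking term by term that no contribution exceeds the budget $\<t\>^{2\ka(a)\delta}$, which is in fact what fixes the values of $\ka(a)$. Assembling everything gives $E^a_\phi(t)\lesssim\ep_0^2+\ep_1^3\<t\>^{2\ka(a)\delta}\lesssim\ep_0^2\<t\>^{2\ka(a)\delta}$, i.e.\ (\ref{Ephi}).
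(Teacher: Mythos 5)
Your setup (the energy functional $E^a_\phi$, the decomposition of $\tfrac{d}{dt}E^a_\phi$ into the quadratic, cubic and error contributions, and the treatment of $\d_t u^{(b)}\cdot\nab\phi^{(c)}$, the cubic term and $Z^aErr2$) matches the paper's Proposition \ref{Prop_Phia}, and you correctly single out $u^{(b)}\cdot\nab\d_t\phi^{(c)}$ as the critical term. But the remedy you propose for it does not close, and this is where the paper's proof is genuinely different. Your toolkit --- integration by parts in time, substitution of the wave equation for $\d_s^2\phi^{(a)}$, the antisymmetry from $\div u=0$, and Kato--Ponce with the decay estimates of Section 3 --- handles only the diagonal piece $b=0$, $c=a$ (the paper's $II^a_{11}$) and the extreme case $|b|\geq N_1-1$. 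For the intermediate range $1\leq|b|\leq N_1-2$ the two orientation factors are $\phi^{(a)}$ and $\phi^{(c)}$ with $c\neq a$, so $\div u^{(b)}=0$ produces no perfect derivative; and a direct H\"older bound forces you to choose between $\lV u^{(b)}\rV_{L^\infty}\sim\<s\>^{-3/4}$ (not time-integrable against the growth of $\lV\Phi^{(a)}\rV_{H^{N(a)}}\lV\Phi^{(c)}\rV_{H^{N(a)+1}}$) and an $L^\infty$ bound on $\<\nab\>^{N(a)+1}\Phi^{(c)}$, which (\ref{Dec_Phi}) does not supply --- either because $|c|\geq N_1-1$, or because the frequency weights $2^{(N(a)+1)k^+}$ versus $2^{-N(|c|+1)k^++(h/2+2)k^+}$ leave a divergent sum over $k>0$ already for $|b|=|c|=1$. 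Integration by parts in time also fails precisely in the low-high regime $|\xi|\approx|\eta|$, where the phase $|\xi|-|\eta|$ vanishes (the time-resonant set $\mathscr{T}_+$) and the resulting symbol $\tilde m=m^*/(|\xi|-|\eta|)$ is unbounded.

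The missing ingredient is the space--time resonance argument of (\ref{Lem4.4})--(\ref{E4_bound}): one rewrites the contribution in Fourier variables through the profiles $\Psi^{(a)},\Psi^{(c)}$, splits into frequency interactions, integrates by parts in time only for $k_2\notin[k-5,k+5]$ (where $\tilde m$ obeys (\ref{S_tildm})), and in the resonant low-high regime integrates by parts in $\eta$ instead --- legitimate because $\nab_\eta|\eta|$ never vanishes, i.e.\ the space-resonant set is empty. This produces the crucial factor $s^{-1}$ at the price of terms containing $\d_\eta\widehat{u^{(b)}}$ and $\d_\eta\widehat{\Psi^{(c)}}$, which are controlled by the weighted bootstrap bound (\ref{Main_Prop_Ass2}), by (\ref{nab_xi_v(b,c)_HN}), and by the weighted $L^\infty$--$L^2$ estimate (\ref{rv}) for $\<r\>u^{(b)}$. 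None of these appear in your proposal, and without them the bound (\ref{Ephi_II_1}) cannot be reached; indeed this is exactly the ``third difficulty'' the introduction says forces the introduction of the profile $\Psi$ in the first place. The rest of your outline (Steps for $II^a_2$, $II^a_3$, $II^a_4$) is consistent with the paper.
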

\begin{proof}
	Define the energy functional
	\begin{equation*}
	E^a_{\phi}(t):=\frac{1}{2}\sum_{|n|\in\{0,N(a)\}}(\lV\d^n\d_t\phi^{(a)}\rV_{L^2}^2+\lV\d^n\nab\phi^{(a)}\rV_{L^2}^2).
	\end{equation*}
	Using the $\phi^{(a)}$-equation in (\ref{Main_Sys_VecFie}) we calculate
	\begin{align*}
	\frac{d}{dt}E^a_{\phi}(t)=&-2\sum_{b+c=a}C_a^b\sum_{|n|\in\{0,N(a)\}}\int_{\R^3}\d^n\d_t\phi^{(a)}\cdot \d^n(u^{(b)}\cdot\nab\d_t\phi^{(c)})dx\\
	&-\sum_{b+c=a}C_a^b\sum_{|n|\in\{0,N(a)\}}\int_{\R^3}\d^n\d_t\phi^{(a)}\cdot \d^n(\d_t u^{(b)}\cdot\nab\phi^{(c)})dx\\
	&-\sum_{b+c+e=a}C_a^{b,c}\sum_{|n|\in\{0,N(a)\}}\int_{\R^3}\d^n\d_t\phi^{(a)}\cdot \d^n(u^{(b)}\cdot\nab(u^{(c)}\cdot\nab\phi^{(e)}))dx\\
	&+\sum_{|n|\in\{0,N(a)\}}\int_{\R^3}\d^n\d_t\phi^{(a)}\cdot\d^n(S+1)^{a_1}\Gamma^{a'}Err2\  dx\\
	=:& -2II^a_1-II^a_2-II^a_3+II^a_4.
	\end{align*}
	
	\emph{Step 1: We prove the bound }
	\begin{equation}          \label{Ephi_II_1}
	\int_0^t II^a_1(s)ds\lesssim \ep_1^3\<t\>^{2\ka(a)\delta}.
	\end{equation}

	By $\div u=0$ and integration by parts, $II^{a}_1$ can be rewritten as
	\begin{align*}
	II^{a}_1(s)=&\sum_{n_1+n_2=n,|n_2|<|n|=N(a)}\int_{\R^3} \d^{n}\d_s\phi^{(a)}\cdot(\d^{n_1}u\cdot\nab\d^{n_2}\d_s\phi^{(a)})dx\\
	&+\sum_{b+c=a,|b|\geq 1}C_a^b \sum_{|n|\in\{0,N(a)\}}\int_{\R^3}\d^n \d_s\phi^{(a)}\cdot\d^n(u^{(b)}\cdot\nab\d_s\phi^{(c)})dx\\
	=:& II^{a}_{11}+II^{a}_{12}.
	\end{align*}
	First, we estimate $II^{a}_{11}$. When $|a|\geq 2$, from (\ref{Dec_d2v_Bet}) and Sobolev embedding we have
	\begin{align*}
	\int_0^t II^{a}_{11}(s)ds\lesssim &\int_0^t \lV\d_s\phi^{(a)}\rV_{H^{N(a)}}^2 \lV \<\nab\>^{N(a)-1}\nab u\rV_{L^{3/2\delta}}ds\\
	\lesssim & \int_0^t \ep_1^2 \<s\>^{2\ka(a)\delta} \lV \nab v\rV_{W^{N(a)-1,{3/2\delta}}}ds
	\lesssim \int_0^t \ep_1^3\<s\>^{-5/4+(2\ka(a)+4)\delta}ds\lesssim \ep_1^3.
	\end{align*}
	When $|a|=1$, by (\ref{Dec_d2v_Bet}) and (\ref{Dec_Phi}) we have
	\begin{align*}
	\int_0^t II^{a}_{11}(s)ds\lesssim &\int_0^t \lV \d_s\phi^{(a)}\rV_{H^{N(a)}}(\lV\<\nab\>^{N(a)/2}\nab u\rV_{L^{\infty}}\lV\d_s\phi^{(a)}\rV_{H^{N(a)}}+\lV\nab u\rV_{H^{N(a)}}\lV \<\nab\>^{N(a)/2}\d_s\phi^{(a)}\rV_{L^{\infty}})ds\\
	\lesssim & \int_0^t \ep_1 \<s\>^{\delta}(\ep_1^2\<s\>^{-5/4+4\delta}+\ep_1\<s\>^{-1+2\delta}\lV \nab u\rV_{H^{N(a)}})ds\\
	\lesssim &\ep_1^3.
	\end{align*}
	Second, we estimate $II^a_{12}$ when $|b|\geq N_1-1$. (\ref{Dec_Phi}) and (\ref{Main_Prop_Ass1}) imply that
	\begin{align*}
	\int_0^t II^{a}_{12}(s)ds \lesssim &\int_0^t \lV \d_s\phi^{(a)}\rV_{H^{N(a)}} \lV u^{(b)}\rV_{H^{N(a)}}\sum_k 2^{N(a)k^++k^+}\lV \d_s P_k\phi^{(c)}\rV_{L^{\infty}}ds\\
	\lesssim & \int_0^t \ep_1^3 \<s\>^{-1+4\delta}ds\lesssim \ep_1^3 \<t\>^{2\ka(a)\delta}.
	\end{align*}
	 Finally it remains to prove that when $1\leq|b|\leq N_1-2$, $|n|\in\{0,N(a)\}$
	\begin{equation}            \label{Lem4.4}
	\int_0^t\int_{\R^3} \d^n \d_s\phi^{(a)}\cdot\d^n (u^{(b)}\cdot\nab\d_s\phi^{(c)})dxds\lesssim \ep_1^3 \<t\>^{2\ka(a)\delta}.
	\end{equation}	
	
	We decompose dyadically in frequency and rewrite the functions $\d_s\phi^{(a)}$, $\d_s\phi^{(c)}$ in terms of the variables $\Psi^{(a)}$, $\Psi^{(c)}$, it suffices to estimate
	\begin{align}  \label{E4_bound}
	II_{k,k_1,k_2;m}:=\int_1^t\int_{\R^6}e^{-is(|\xi|-|\eta|)}m(\xi,\eta) \overline{ \widehat{P_k\Psi^{(a)}}}(\xi) \widehat{P_{k_1}u^{(b)}}(\xi-\eta)\widehat{P_{k_2}\Psi^{(c)}}(\eta)d\xi d\eta ds,
	\end{align}
	for any $t\in[1,T]$, $m(\xi,\eta):=(1+\xi^{2n}) \eta$, $|n|=N(a),b+c=a$. By $\div u^{(b)}=0$ and $[\nab,P_k]=0$, we have
	\begin{align}       \label{eta.u}
	\eta\cdot\widehat{P_{k_1}u^{(b)}}(\xi-\eta)=(\eta-\xi+\xi)\cdot \widehat{P_{k_1}u^{(b)}}(\xi-\eta)=\xi\cdot \widehat{P_{k_1}u^{(b)}}(\xi-\eta).
	\end{align}
	Then denote
	\begin{equation*}
	m_{hh}:=(1+\xi^{2n})\xi,
	\end{equation*}
	we obtain from (\ref{eta.u})
	\begin{align*}
	m(\xi,\eta)\cdot\widehat{P_{k_1}u^{(b)}}(\xi-\eta)=&\mathbf{1}_{k_2\leq k+5}\cdot m(\xi,\eta)\cdot\widehat{P_{k_1}u^{(b)}}(\xi-\eta)+\mathbf{1}_{k_2> k+5}\cdot m(\xi,\eta)\cdot\widehat{P_{k_1}u^{(b)}}(\xi-\eta)\\
	=&\mathbf{1}_{k_2\leq k+5}\cdot m(\xi,\eta)\cdot\widehat{P_{k_1}u^{(b)}}(\xi-\eta)+\mathbf{1}_{k_2> k+5}\cdot m_{hh}(\xi,\eta)\cdot\widehat{P_{k_1}u^{(b)}}(\xi-\eta)\\
	=:&m^{*}(\xi,\eta)\cdot \widehat{P_{k_1}u^{(b)}}(\xi-\eta).
	\end{align*}

	\emph{Case 1: Low-high, i.e $k_2\in[k-5,k+5]$.}
	
	Integrating by parts in $\eta$, we have
	\begin{align*}
	II_{k,k_1,k_2;m}
	=&\int_1^t\int_{\R^6} s^{-1}\d_{\eta_j}(\frac{\eta_j}{|\eta|}m(\xi,\eta))  \overline{\widehat{P_k\Phi^{(a)}}}(\xi) \widehat{P_{k_1}u^{(b)}}(\xi-\eta)\widehat{P_{k_2}\Phi^{(c)}}(\eta)d\xi d\eta ds\\
	&+\int_1^t\int_{\R^6} s^{-1}\frac{\eta_j}{|\eta|}m(\xi,\eta)  \overline{\widehat{P_k\Phi^{(a)}}}(\xi) \d_{\eta_j}\widehat{P_{k_1}u^{(b)}}(\xi-\eta)\widehat{P_{k_2}\Phi^{(c)}}(\eta)d\xi d\eta ds\\
	&+\int_1^t\int_{\R^6} s^{-1}\frac{\eta_j}{|\eta|}m(\xi,\eta)  \overline{\widehat{P_k\Phi^{(a)}}}(\xi) \widehat{P_{k_1}u^{(b)}}(\xi-\eta)e^{is|\eta|}\d_{\eta_j}\widehat{P_{k_2}\Psi^{(c)}}(\eta)d\xi d\eta ds.
	\end{align*}
	Then estimating $u^{(b)}$ in ${L^{\infty}}$ and the other two terms in $L^2$, it follows from Lemma $\ref{Sym_Pre_lemma}$ that
	\begin{align*}
	&\sum_{k,k_1}\sum_{k_2\in[k-5,k+5]}  II_{k,k_1,k_2;m}\\
	\lesssim &\int_1^t s^{-1}\sum_{|k_2-k|<5}\sum_{k_1<k+6} 2^{2N(a)k^+}\lV P_k\Phi^{(a)}\rV_{L^2} \Big(\lV P_{k_1}u^{(b)}\rV_{L^{\infty}} \lV P_{k_2}\Phi^{(c)}\rV_{L^2} \\
	&+2^k\lV \mathcal{F}^{-1}(\nab_{\xi}\widehat{P_{k_1}u^{(b)}}(\xi))\rV_{L^{\infty}} \lV P_{k_2}\Phi^{(c)}\rV_{L^2} +2^k \lV P_{k_1}u^{(b)}\rV_{L^{\infty}} \lV \nab_{\xi} \widehat{P_{k_2}\Psi^{(c)}}(\xi)\rV_{L^2} \Big)ds\\
	\lesssim & \int_1^t s^{-1}\lV\Phi^{(a)}\rV_{H^{N(a)}}\Big(\lV \nab u^{(b)}\rV_{H^1}\lV \Phi^{(c)}\rV_{H^{N(a)}}+\sum_{k_1}\lV \<r\>P_{k_1}u^{(b)}\rV_{L^{\infty}}\lV \Phi^{(c)}\rV_{H^{N(a)+1}}\\
	&+\lV\nab u^{(b)}\rV_{H^1}\lV\FF^{-1}(\xi|\d_{\xi}\widehat{\Psi^{(c)}})\rV_{H^{N(a)}}\Big)ds.
	\end{align*}
	Then by (\ref{Main_Prop_Ass1}), (\ref{Main_Prop_Ass2}) and (\ref{rv}) we get
	\begin{align*}
	\sum_{k,k_1}\sum_{k_2\in[k-5,k+5]}  II_{k,k_1,k_2;m}\lesssim & \int_1^t  \ep_1\<s\>^{-1+\ka(a)\delta}(\lV \nab u^{(b)}\rV_{H^1}\ep_1\<s\>^{\ka(|c|+1)\delta}+\ep_1^2\<s\>^{\ka(c)\delta})ds\\
	\lesssim & \ep_1^3 \<t\>^{2\ka(a)\delta}.
	\end{align*}

	\emph{Case 2: High-low and high-high, i.e $k_2\in(-\infty,k-5)\cup (k+5,\infty)$.} 
	
	Since the phase $|\xi|-|\eta|$ in (\ref{E4_bound}) doesn't equal to zero, by integration by parts in times, it suffices to prove that
	\begin{equation}           \label{E4_k2=!k}
	\sum\limits_{k,k_1,k_2}\big(II^{0}_{k,k_1,k_2}-\int_1^t II^{1}_{k,k_1,k_2}+II^{2}_{k,k_1,k_2}+II^{3}_{k,k_1,k_2}ds\big) \lesssim \epsilon_1^3\<t\>^{2\ka(a)\delta},
	\end{equation}
	where
	\begin{gather*}
	II^{0}_{k,k_1,k_2}:=\mathcal{Q}[\tilde{m};P_k\Psi^{(a)},P_{k_1}u^{(b)},P_{k_2}\Psi^{(c)}]\big|_1^t,\ \ 
	II^{1}_{k,k_1,k_2}:=\mathcal{Q}[\tilde{m};\d_sP_k\Psi^{(a)},P_{k_1}u^{(b)},P_{k_2}\Psi^{(c)}],\\
	II^{2}_{k,k_1,k_2}:=\mathcal{Q}[\tilde{m};P_k\Psi^{(a)},\d_sP_{k_1}u^{(b)},P_{k_2}\Psi^{(c)}],\ \ 
	II^{3}_{k,k_1,k_2}:=\mathcal{Q}[\tilde{m};P_k\Psi^{(a)},P_{k_1}u^{(b)},\d_s P_{k_2}\Psi^{(c)}],
	\end{gather*}
	and
	\begin{gather*}
	\tilde{m}(\xi,\eta):=m^{*}(\xi,\eta)/(|\xi|-|\eta|),\\
	\mathcal{Q}[\tilde{m};f,g,h]:=\int_{\R^6} ie^{-is(|\xi|-|\eta|)}\tilde{m}(\xi,\eta)\overline{\widehat{f}(\xi)}\widehat{g}(\xi-\eta)\widehat{h}(\eta)d\xi d\eta.
	\end{gather*}
	
	To prove the bound (\ref{E4_k2=!k}), we need the $S^{\infty}$ norm of symbol $\tilde{m}(\xi,\eta)$. By Lemma $\ref{ContSymb}$, we have
	\begin{equation} \label{S_tildm}
	\lV \tilde{m}(\xi,\eta)\rV_{S^{\infty}_{k,k_1,k_2}}\lesssim 2^{2N(a)k^++\min(k-k_2,k_2-k)}.
	\end{equation}
	
	\emph{Case 2.1: The contribution of $II^0_{k,k_1,k_2}$.} 
	
	We estimate the lowest frequency factor $P_k\Psi^{(a)}$ or $P_{k_2}\Psi^{(c)}$ in $L^{\infty}$ and the other two factors in $L^2$, using (\ref{S_tildm}), H\"{o}lder and (\ref{Main_Prop_Ass1}) we have
	\begin{align*}
	\sum_{k,k_1}\sum_{k_2\in(-\infty,k-5)\cup(k+5,\infty)}II^0_{k,k_1,k_2}\lesssim & \sum_{|k-k_1|<4}\sum\limits_{k_2<k-5} 2^{2N(a)k^++k_2-k}\lV P_k\Psi^{(a)}\rV_{L^2}\lV P_{k_1}u^{(b)}\rV_{L^2} \lV P_{k_2}\Psi^{(c)}\rV_{L^{\infty}} \\
	&+\sum\limits_{k}\sum_{k_2>k+5,|k_1-k_2|<4} 2^{2N(a)k^++k-k_2}\lV P_k\Psi^{(a)}\rV_{L^{\infty}}\lV P_{k_1}u^{(b)}\rV_{L^2} \lV P_{k_2}\Psi^{(c)}\rV_{L^2}\\
	\lesssim & \sum_{k,k_1;|k-k_1|<4}2^{2N(a)k^+}\lV P_k\Psi^{(a)}\rV_{L^2}\lV P_{k_1}u^{(b)}\rV_{L^2} \sum_{k_2}\lV P_{k_2}\Psi^{(c)}\rV_{L^{\infty}}\\
	&+ \sum_{k}\lV P_{k_2}\Psi^{(a)}\rV_{L^{\infty}} \sum_{k_1,k_2;|k_1-k_2|<4}2^{2N(a)k_2^+}\lV P_k\Psi^{(a)}\rV_{L^2}\lV P_{k_1}u^{(b)}\rV_{L^2}\\
	\lesssim &\lV\Phi^{(a)}\rV_{H^{N(a)}}\lV u^{(b)}\rV_{H^{N(a)}}\lV\Phi^{(c)}\rV_{H^{N(a)}}\lesssim \ep_1^3\<t\>^{2\ka(a)\delta}.
	\end{align*}
	
	\emph{Case 2.2: The contribution of $II^1_{k,k_1,k_2}$ and $II^3_{k,k_1,k_2}$.} 
	
	When $a=b, c=0,k_2<k-5$, by (\ref{S_tildm}), (\ref{H^n-1_dttphi^a}), (\ref{dt(Psi)_2norm_dec}) and (\ref{Dec_Phi}), one obtain
	\begin{align*}
	&\sum_{k,k_1}\sum_{k_2<k-5}II^1_{k,k_1,k_2}\\
	\lesssim & \sum_{|k-k_1|<4}\sum_{k_2<k-5} 2^{2N(a)k^++k_2-k} \lV \d_s P_k\Psi^{(a)}\rV_{L^2}\lV P_{k_1}u^{(a)}\rV_{L^2}  \lV  P_{k_2}\Phi\rV_{L^{\infty}}  \\
	\lesssim &\lV\d_s\Psi^{(a)}\rV_{H^{N(a)-2}}\lV \nab u^{(a)}\rV_{H^{N(a)}}\sum_{k_2}2^{k_2^+}\lV P_{k_2}\Phi\rV_{L^{\infty}}+\lV\d_s\Psi^{(a)}\rV_{2}\lV  u^{(a)}\rV_{2}\sum_{k_2}\lV P_{k_2}\Phi\rV_{L^{\infty}}\\
	\lesssim & \ep_1^2\<s\>^{-1+6\delta}\lV \nab u^{(a)}\rV_{H^{N(a)}}+\ep_1^4\<s\>^{-7/4+(|a|+3)\delta}.
	\end{align*}
	When $a=b,c=0,k_2>k+5$, from (\ref{S_tildm}), (\ref{dt(Psi)_2norm_dec}) and (\ref{Dec_Phi}) it follows that
	\begin{align*}
	&\sum_{k,k_1}\sum_{k_2>k+5}II^1_{k,k_1,k_2}\\
	\lesssim & \sum_k\sum_{k_2>k+5,|k_1-k_2|<4} 2^{2N(a)k^++k-k_2}\lV \d_s P_k\Psi^{(a)}\rV_{L^2}\lV P_{k_1}u^{(a)}\rV_{L^2}  \lV  P_{k_2}\Phi\rV_{L^{\infty}}\\
	\lesssim & \sum_{k<0}\sum_{k_2>k+5,|k_1-k_2|<4} 2^{k-k_2}\lV \d_s P_k\Psi^{(a)}\rV_{L^2}\lV P_{k_1}u^{(a)}\rV_{L^2}  \lV  P_{k_2}\Phi\rV_{L^{\infty}}\\
	&+\sum_{k\geq 0}\sum_{k_2>k+5,|k_1-k_2|<4} 2^{(N(a)-3)k+(N(a)+4)(k-k_2)+(N(a)+3)k_2}\lV \d_s P_k\Psi^{(a)}\rV_{L^2}\lV P_{k_1}u^{(a)}\rV_{L^2}  \lV  P_{k_2}\Phi\rV_{L^{\infty}}\\
	\lesssim & \lV \d_s\Psi^{(a)}\rV_{L^2} \lV u^{(a)}\rV_{L^2} \lV \Phi\rV_{L^{\infty}}+\lV \d_s \Psi^{(a)}\rV_{H^{N(a)-3}}\lV u^{(a)}\rV_{H^{N(a)}}\sup_{k}2^{3k}\lV P_k\Phi\rV_{L^{\infty}} \\
	\lesssim & \ep_1^3 \<s\>^{-7/4+(|a|+3)\delta}.
	\end{align*}
	Then the other cases, i.e. $|b|<|a|$ can be estimated, using (\ref{dt(Psi)_2norm_dec}) and (\ref{Main_Prop_Ass1}),
	\begin{align*}
	\sum_{k,k_1}\sum\limits_{k_2\in(-\infty,k-5)\cup(k+5,\infty)}II^1_{k,k_1,k_2}	\lesssim & \sum_{|k-k_1|<4}\sum_{k_2<k-5} 2^{2N(a)k^++k_2-k} \lV\d_sP_k\Psi^{(a)}\rV_{L^2}\lV P_{k_1}u^{(b)}\rV_{L^2}\lV P_{k_2}\Phi^{(c)}\rV_{L^{\infty}}\\
	&+\sum_k\sum_{k_2>k+5,|k_1-k_2|<4} 2^{2N(a)k^++k-k_2} \lV\d_sP_k\Psi^{(a)}\rV_{L^2}\lV P_{k_1}u^{(b)}\rV_{L^2}\lV P_{k_2}\Phi^{(c)}\rV_{L^{\infty}}\\
	\lesssim &\lV\d_s\Psi^{(a)}\rV_{H^{N(a)-3}}\lV\nab u^{(b)}\rV_{H^{N(b)}}\lV\Phi^{(c)}\rV_{H^{N(a)}}\\
	\lesssim & \ep_1^2 \<s\>^{-3/4+(|a|+3)\delta}\lV\nab u^{(b)}\rV_{H^{N(b)}}.
	\end{align*}
	Similarly, by (\ref{S_tildm}), (\ref{dt(Psi)_2norm_dec}) and (\ref{Main_Prop_Ass1}), we have
	\begin{align*}
	\sum_{k,k_1}\sum\limits_{k_2\in(-\infty,k-5)\cup(k+5,\infty)}II^3_{k,k_1,k_2}
	\lesssim &\lV\Psi^{(a)}\rV_{H^{N(a)}}\lV\nab u^{(b)}\rV_{H^{N(a)}}\lV\d_s\Psi^{(c)}\rV_{H^{N(c)-3}}\\
	\lesssim & \ep_1^2 \<s\>^{-3/4+(|a|+3)\delta}\lV\nab u^{(b)}\rV_{H^{N(a)}}.
	\end{align*}
	By H\"{o}lder, the bound (\ref{E4_k2=!k}) for $II^1_{k,k_1,k_2}$ and $II^3_{k,k_1,k_2}$ follows.
	
	\emph{Case 2.3: The contribution of $II^2_{k,k_1,k_2}$.} 
	
	The contribution of the triplets $(k,k_1,k_2)$ with $k_2>k+5,k_1\in[k_2-3,k_2+3]$ can be estimated using (\ref{Dec_dtv_VF}), i.e
	\begin{equation*}\label{I2_c1}
	\begin{aligned}
	\sum_{k,k_1}\sum_{k_2>k+5}II^2_{k,k_1,k_2}\lesssim & \sum_k\sum_{k_2>k+5,|k_1-k_2|<4} 2^{2N(a)k^++k-k_2}\lV P_k\Phi^{(a)}\rV_{L^2} \lV \d_s P_{k_1} u^{(b)}\rV_{L^{\infty}}\lV P_{k_2}\Phi^{(c)}\rV_{L^2}\\
	\lesssim & \lV \Phi^{(a)}\rV_{H^{N(a)}}\lV \d_s u^{(b)}\rV_{L^{\infty}}\lV \Phi^{(c)}\rV_{H^{N(a)}}\lesssim \ep_1^3 \<s\>^{-5/4+(8+|b|)\delta}.
	\end{aligned}
	\end{equation*}
	
	Then we consider $II^2_{k,k_1,k_2}$ when $k_2<k-5$. If $|c|\geq N_1-1$, we obtain from (\ref{S_tildm}), (\ref{Dec_dtv_VF}) and (\ref{Main_Prop_Ass1})
	\begin{equation*}       \label{I2_c2}
	\begin{aligned}
	\sum_{k,k_1}\sum_{k_2<k-5}II^2_{k,k_1,k_2}\lesssim & \sum_{|k-k_1|<4}\sum_{k_2<k-5} 2^{2N(a)k^++k_2-k} \lV P_k\Phi^{(a)}\rV_{L^2} \lV \d_s P_{k_1} u^{(b)}\rV_{L^{\infty}}\lV P_{k_2}\Phi^{(c)}\rV_{L^2}  \\
	\lesssim &\lV \Phi^{(a)}\rV_{H^{N(a)}}\sum_{k_1}2^{N(a)k_1^+}\lV P_{k_1}\d_s u^{(b)}\rV_{L^{\infty}}\lV \Phi^{(c)}\rV_{H^2}\lesssim \ep_1^3 \<s\>^{-5/4+(8+|b|)\delta}.
	\end{aligned}
	\end{equation*}
	If $|c|\leq N_1-2$, by (\ref{S_tildm}), (\ref{Main_Prop_Ass1}), (\ref{H^n_dtv}) and (\ref{Dec_Phi}), we have
	\begin{equation*}       \label{I2_c3}
	\begin{aligned}
	\sum_{k,k_1}\sum_{k_2<k-5}II^2_{k,k_1,k_2}\lesssim & \sum_{|k-k_1|<4}\sum_{k_2<k-5}2^{2N(a)k^++k_2-k} \lV P_k\Phi^{(a)}\rV_{L^2} \lV \d_s P_{k_1} u^{(b)}\rV_{L^2}\lV P_{k_2}\Phi^{(c)}\rV_{L^{\infty}}\\
	\lesssim &\lV \Phi^{(a)}\rV_{H^{N(a)}}\lV \d_s u^{(b)}\rV_{H^{N(a)-1}}\sum_{k_2}2^{k_2^+}\lV P_{k_2}\Phi^{(c)}\rV_{L^{\infty}}\\
	\lesssim &\ep_1^2 \<s\>^{-1+7\delta}\sum_{l\leq b}\lV \nab u^{(l)}\rV_{H^{N(b)}}+\ep_1^3\<s\>^{-2+10\delta}.
	\end{aligned}
	\end{equation*}
	These imply the bound (\ref{E4_k2=!k}) for $II^2_{k,k_1,k_2}$. Then the bound (\ref{E4_k2=!k}) is obtained, and hence we obtain the bound (\ref{Lem4.4}).

	\emph{Step 2: We prove the bound}
	\begin{equation}    \label{Ephi_II2}
	\int_0^t II^{a}_2(s)ds\lesssim \ep_1^3\<t\>^{2\ka(a)\delta}.
	\end{equation}
	
	Notice that when $|b|<|c|$,  (\ref{Dec_dtv_VF}) implies
	\begin{equation*}
	\lV\d_su^{(b)}\cdot\nab\phi^{(c)}\rV_{L^2} \lesssim \lV \d_s u^{(b)}\rV_{L^{\infty}}\lV\nab\phi^{(c)}\rV_{L^2} \lesssim \ep_1^2\<s\>^{-5/4+(6+|b|)\delta},
	\end{equation*}
	and when $|b|\geq |c|$,  (\ref{H^n_dtv}) and (\ref{Dec_Phi}) give
	\begin{equation*}
	\lV\d_su^{(b)}\cdot\nab\phi^{(c)}\rV_{L^2} \lesssim \lV \d_s u^{(b)}\rV_{L^2}\lV\nab\phi^{(c)}\rV_{L^{\infty}} \lesssim\sum_{|l|\leq |b|}\ep_1\<s\>^{-1+5\delta}\lV \nab v^{(l)}\rV_{H^{N(b)}}+\ep_1^3\<s\>^{-2+8\delta}.
	\end{equation*}
	Therefore,
	\begin{equation*}
	\sum_{b+c=a} C_a^b\int_0^t\int_{\R^3}\d_s\phi^{(a)}\cdot (\d_su^{(b)}\cdot\nab\phi^{(c)})dxds\lesssim \ep_1^3.
	\end{equation*}
	
	Now it suffices to prove that
	\begin{equation*}
    \int_0^t\widetilde{II}^{a}_{2}(s)ds:=\sum_{b+c=a}\sum_{|n|=N(a)}\int_0^t\int_{\R^3}\d^{n}\d_s \phi^{(a)}\cdot\d^{n}(\d_s u^{(b)}\cdot\nab\phi^{(c)})dxds\lesssim \ep_1^3\<t\>^{2\delta}.
	\end{equation*}
	$\widetilde{II}^{a}_2$ can be divided into three terms, i.e.
	\begin{align*}
	\widetilde{II}^{a}_2=&\sum_{|n|=N(a)}\int_{\R^3}\d^{n}\d_s \phi^{(a)}\cdot(\d^{n}\d_s u^{(a)}\cdot\nab\phi)dx\\
	&+\sum_{|n|=N(a)}\int_{\R^3}\d^{n}\d_s \phi^{(a)}\cdot\sum_{n_1+n_2=n,n_1<n}(\d^{n_1}\d_s u^{(a)}\cdot\nab\d^{n_2}\phi)dx\\
	&+\sum_{b+c=a,|b|<|a|}\sum_{|n|=N(a)}C_a^b\int_{\R^3} \d^n \d_s\phi^{(a)}\cdot\d^n(\d_s u^{(b)}\cdot\nab\phi^{(c)})dx\\
	=:& \tilde{II}^{a}_{21}+\tilde{II}^{a}_{22}+\tilde{II}^{a}_{23}.
	\end{align*}
	
	We estimate $\tilde{II}^{a}_{21}, \tilde{II}^{a}_{22},\tilde{II}^{a}_{23}$ respectively. Firstly, we consider the term $\tilde{II}^{a}_{21}$. Integrating by parts, we have
	\begin{align*}
	\tilde{II}^{a}_{21}=&\sum_{|n|=N(a)}\big[\d_s \int_{\R^3}\d^{n}\d_s \phi^{(a)}\cdot(\d^{n} u^{(a)}\cdot\nab\phi)dx +\int_{\R^3}\d^{n-1}\d^2_s \phi^{(a)}\cdot\d(\d^{n}u^{(a)}\cdot\nab\phi)dx\\
	&-\int_{\R^3}\d^{n}\d_s \phi^{(a)}\cdot(\d^{n}u^{(a)}\cdot\nab\d_s\phi)dx\big].
	\end{align*}
	Using (\ref{Dec_Phi}) and (\ref{H^n-1_dttphi^a}), it follows that
	\begin{equation}              \label{II*1}
	\begin{aligned}
	\int_0^t \tilde{II}^{a}_{21}(s)ds\lesssim & \sum_{|n|=N(a)}\big[\sup_{s\in[0,t]}\lV\d^n\d_s\phi^{(a)}\rV_{L^2}\lV\d^n u^{(a)}(s)\rV_{L^2}\lV\nab\phi(s)\rV_{H^2}\\
	&+\int_0^t \lV\d^{n-1}\d^2_s\phi^{(a)}\rV_{L^2}\lV\nab u^{(a)}\rV_{H^{N(a)}}\lV\<\nab\>^2\phi\rV_{L^{\infty}}ds\\
	&+\int_0^t \lV\d^{n}\d_s\phi^{(a)}\rV_{L^2}\lV\nab u^{(a)}\rV_{H^{N(a)}}\lV\nab\d_s\phi\rV_{L^{\infty}}ds\big]\\
	\lesssim & \ep_1^3\<t\>^{\ka(a)\delta}+\int_0^t \ep_1^2\<s\>^{-1+6\delta}(1+\sum_{|l|\leq|a|}\lV \nab u^{(l)}\rV_{H^{N(a)}})\lV \nab u^{(l)}\rV_{H^{N(a)}}ds\\
	&+\int_0^t \ep_1^2\<s\>^{\ka(a)\delta-1+2\delta}\lV \nab u^{(a)}\rV_{H^{N(a)}}ds\\
	\lesssim & \ep_1^3 \<t\>^{\ka(a)\delta}.
	\end{aligned}
	\end{equation}
	Second, we consider the term $\tilde{II}^{a}_{22}$. If $|a|\geq N_1-1$, it follows from (\ref{Dec_Phi}) and (\ref{H^n_dtv}) that
	\begin{align*}
	\sum_{n_1+n_2=n,|n_1|<|n|=N(a)}\lV \d^{n_1}\d_su^{(a)}\cdot\nab\d^{n_2}\phi\rV_{L^2}\lesssim & \sum_{n_1+n_2=n,|n_1|<|n|} \lV \d^{n_1}\d_s u^{(a)}\rV_{L^2} \lV\nab\d^{n_2}\phi\rV_{L^{\infty}}\\
	\lesssim &\ep_1\<s\>^{-1+4\delta}\sum_{|l|\leq |a|}\lV \nab u^{(l)}\rV_{H^{N(a)}}+\ep_1^3\<s\>^{-2+7\delta}.
	\end{align*}
	If $|a|\leq N_1-2$, from (\ref{Dec_dtv_VF}) and (\ref{Dec_Phi}) we have
	\begin{align*}
	&\sum_{n_1+n_2=n,|n_1|<|n|=N(a)}\lV \d^{n_1}\d_su^{(a)}\cdot\nab\d^{n_2}\phi\rV_{L^2}\\
	\lesssim &\lV\<\nab\>^{N(a)/2}\d_s u^{(a)}\rV_{L^{\infty}}\lV \nab\phi\rV_{H^{N(a)}}+\lV\d_s u^{(a)}\rV_{H^{N(a)-1}}\lV \<\nab\>^{N(a)/2}\nab\phi\rV_{L^{\infty}}\\
	\lesssim & \ep_1^2 \<s\>^{-5/4+(4+|a|)\delta}+\ep_1\<s\>^{-1+4\delta}\sum_{l\leq a}\lV \nab u^{(l)}\rV_{H^{N(a)}}.
	\end{align*}
	Using these it follows that
	\begin{equation}          \label{II*2}
	\int_0^t \tilde{II}^{a}_{22}(s)ds \lesssim \int_0^t \lV\d_s\phi^{(a)}\rV_{H^{N(a)}}\sum_{n_1+n_2=n,|n_1|<|n|=N(a)}\lV \d^{n_1}\d_su^{(a)}\cdot\nab\d^{n_2}\phi\rV_{L^2}ds\lesssim \ep_1^3.
	\end{equation}

	Finally, we consider the term $\tilde{II}^{a}_{23}$. When $ N_1-1\leq |a|\leq N_1$, $|b|\geq |c|$, using (\ref{H^n_dtv}) and (\ref{Dec_Phi}), it's easy to obtain
	\begin{align*}
	\sum_{b+c=a;|b|<|a|}\lV \d^{n}(\d_s u^{(b)}\cdot\nab \phi^{(c)})\rV_{L^2}\lesssim & \sum_{b+c=a;|b|<|a|}\lV \d_s u^{(b)}\rV_{H^{N(a)}}\sum_{k}2^{N(a)k^+}\lV P_k\nab\phi^{(c)}\rV_{L^{\infty}}\\
	\lesssim& \sum_{|l|\leq |b|}\ep_1\<s\>^{-1+5\delta}\lV \nab u^{(l)}\rV_{H^{N(b)}}+\ep_1^3\<s\>^{-2+8\delta}.
	\end{align*}
	When $N_1-1\leq |a|\leq N_1$, $|b|<|c|$, using (\ref{Dec_dtv_VF}), it follows that
	\begin{equation*}
	\sum_{b+c=a;|b|<|a|}\lV \d^{n}(\d_s u^{(b)}\cdot\nab \phi^{(c)})\rV_{L^2}\lesssim \sum_{b+c=a;|b|<|a|}\sum_{k}2^{N(a)k^+}\lV P_k\d_s u^{(b)}\rV_{L^{\infty}} \lV\nab\phi^{(c)}\rV_{H^{N(a)}}\lesssim \ep_1^2 \<s\>^{-6/5}.
	\end{equation*}
	If $|a|\leq N_1-2$, by (\ref{Dec_dtv_VF}), (\ref{H^n_dtv}) and (\ref{Dec_Phi}), we have
	\begin{align*}
	\sum_{b+c=a;|b|<|a|}\lV \d^{n}(\d_s u^{(b)}\cdot\nab \phi^{(c)})\rV_{L^2}\lesssim &\sum_{b+c=a;|b|<|a|}(\lV\d_s u^{(b)}\rV_{H^{N(a)}}\lV\nab\phi^{(c)}\rV_{L^{\infty}}+\lV\d_s u^{(b)}\rV_{L^{\infty}}\lV\nab\phi^{(c)}\rV_{H^{N(a)}})\\
	\lesssim & \ep_1^2\<s\>^{-6/5}+\sum_{|l|\leq |b|}\ep_1\<s\>^{-1+5\delta}\lV \nab u^{(l)}\rV_{H^{N(b)}}.
	\end{align*}
	By the above bounds, we get
	\begin{equation}        \label{II*3}
	\int_0^t \tilde{II}^{a}_{23}(s)ds\lesssim \sum_{b+c=a,|b|<|a|}\sum_{|n|=N(a)}\int_0^t \lV\d_s\phi^{(a)}\rV_{H^{N(a)}}\lV \d^{n}(\d_s u^{(b)}\cdot\nab \phi^{(c)})\rV_{L^2}ds\lesssim \ep_1^3.
	\end{equation}
	
	Hence, from (\ref{II*1})-(\ref{II*3}), we have
	\begin{equation*}
	\int_0^t \widetilde{II}^{a}_2(s)ds\lesssim \ep_1^3\<t\>^{\ka(a)\delta},\ \mathrm{for}\ b+c=a.
	\end{equation*}
	This completes the proof of the bound (\ref{Ephi_II2}).
	
	\emph{Step 3. We prove the following two bounds }
	\begin{equation}         \label{Ephi_HignOrd}
	\int_0^t |II^{a}_3(s)|+|II^a_4(s)|d\lesssim \ep_1^4\<t\>^{2\ka(a)\delta}.
	\end{equation}

	By $\div u=0$, we rewrite $II^{a}_3$ as
	\begin{equation}      \label{Ephi_II3_Rewr}
	\begin{aligned}
	II^{a}_3=&\sum_{|n|=N(a)}\int_{\R^3}\d^{n}\d_s\phi^{(a)}\cdot(u\cdot\nab(u\cdot\nab\d^{n}\phi^{(a)}))dx\\
	&+\sum_{n_1+n_2+n_3=n,|n_3|<|n|=N(a)}\int_{\R^3}\d^{n}\d_s\phi^{(a)}\cdot\d^{n_1}u\cdot\nab(\d^{n_2}u\cdot\nab\d^{n_3}\phi^{(a)})dx\\
	&+\sum_{b+c+e=a,|e|<|a|}\sum_{|n|=N(a)}C_a^{b,c}\int_{\R^3}\d^n \d_s\phi^{(a)}\cdot\d^n(u^{(b)}\cdot\nab(u^{(c)}\cdot\nab\phi^{(e)}))dx\\
	&+\sum_{b+c+e=a}C_a^{b,c}\int_{\R^3}\d_s\phi^{(a)}\cdot (u^{(b)}\cdot\nab(u^{(c)}\cdot\nab\phi^{(e)})) dx.
	\end{aligned}
	\end{equation}
	The first term in the right hand side of (\ref{Ephi_II3_Rewr}) can be estimated using $\div u=0$ and (\ref{H^n_dtv}),
	\begin{align*}
	&\int_0^t\int_{\R^3}\d^{n}\d_s\phi^{(a)}\cdot(u\cdot\nab(u\cdot\nab\d^n\phi^{(a)}))dxds\\
	=& \int_0^t \int_{\R^3}-\frac{1}{2}\d_s (u\cdot\nab\d^{N(a)}\phi^{(a)})+(\d_s u\cdot\nab\d^{N(a)}\phi^{(a)})\cdot (u\cdot\nab\d^{N(a)}\phi^{(a)})dxds\\
	\lesssim & \sup_{s\in[0,t]}\lV u(s)\rV_{H^2}^2\lV\nab\phi^{(a)}(s)\rV_{H^{N(a)}}^2+\int_0^t \lV\d_s u\rV_{H^2}\lV \nab u\rV_{H^1}\lV\nab\phi^{(a)}\rV_{H^{N(a)}}^2 ds\\
	\lesssim & \ep_1^4\<t\>^{2\ka(a)\delta}.
	\end{align*}
	The other three terms in (\ref{Ephi_II3_Rewr}) can be estimated similarly by H\"{o}lder and (\ref{Main_Prop_Ass1}). The bound (\ref{Ephi_HignOrd}) for $II^a_4$ can be estimated directly, using (\ref{Dec_Phi}) and (\ref{Dec_Phi/nab}). Hence, the desired bound (\ref{Ephi_HignOrd}) follows. This completes the proof of the Proposition. 	
\end{proof}

\section{Bounds on the profile: weighted $L^2$ norms}
In this section we prove (\ref{Main_Prop_result2}), namely,
\begin{proposition} \label{Prop_Psia}
	With the hypothesis in Proposition \ref{Main_Prop}, for any $t\in[0,T]$, $0\leq |a|\leq N_1-1$, we have
	\begin{equation}           \label{ImpBd_dxiPsi}
	\lV\FF^{-1}(|\xi|\nab_{\xi}\widehat{\Psi^{(a)}})\rV_{H^{N(|a|+1)}}\lesssim \ep_0\<t\>^{\ka(|a|+1)\delta}.
	\end{equation}
\end{proposition}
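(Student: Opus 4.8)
The plan is to control the weighted norm $\lV\FF^{-1}(|\xi|\nab_\xi\widehat{\Psi^{(a)}})\rV_{H^{N(|a|+1)}}$ through the commutation identity \eqref{xidxif}, which reduces the bound to estimating $\lV S\Psi^{(a)}\rV_{H^{N(|a|+1)}}$, $\lV \Om\Psi^{(a)}\rV_{H^{N(|a|+1)}}$, $\lV\Psi^{(a)}\rV_{H^{N(|a|+1)}}$, and $\lV t\d_t\Psi^{(a)}\rV_{H^{N(|a|+1)}}$. The first two are controlled by $\lV\Phi^{(a')}\rV_{H^{N(a')}}$ with $|a'|\leq|a|+1$ (since $S$ and $\tilde\Om$ are among the vector fields $Z$, and $\Psi^{(a)}=e^{-it|\nab|}\Phi^{(a)}$ commutes with $|\nab|$-based operators up to lower order terms), hence by the bootstrap hypothesis \eqref{Main_Prop_Ass1} they are $\lesssim \ep_1\<t\>^{\ka(|a|+1)\delta}$; the third term is likewise controlled by \eqref{Main_Prop_Ass1}. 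Therefore, just as indicated in the "Main ideas" section, the whole proposition reduces to proving the decay estimate
\begin{equation*}
\lV \d_t\Psi^{(a)}\rV_{H^{N(|a|+1)}}\lesssim \ep_0 \<t\>^{-1+\ka(|a|+1)\delta}.
\end{equation*}

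\textbf{The core estimate.} To prove this, I would write Duhamel's formula for $\widehat{\Psi^{(a)}}$ in Fourier space as in the excerpt, so that $\d_t\widehat{\Psi^{(a)}}(t,\xi)$ is a sum over $\nu\in\{+,-\}$ and over $b+c=a$ (plus commutator/error contributions) of terms
\begin{equation*}
\int_{\R^3} e^{-it(|\xi|-\nu|\eta|)}\, m(\xi,\eta)\, \widehat{u^{(b)}}(\xi-\eta)\, \widehat{\Psi^{(c)}_\nu}(\eta)\, d\eta,
\end{equation*}
with $m$ a symbol carrying at most two derivatives (from $\d_j(\phi\nab u)$, $u\cdot\nab u$, $\d_j(\nab\phi\,\d_j\phi)$, and the analogous $Err$ terms after one substitution). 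Since the space-resonant set $\mathscr{S}_\pm$ is empty, I would integrate by parts in $\eta$ using $\nab_\eta e^{-it(|\xi|-\nu|\eta|)} = i\nu t \tfrac{\eta}{|\eta|} e^{-it(|\xi|-\nu|\eta|)}$, gaining a factor $t^{-1}$. The derivative $\nab_\eta$ then falls either on the symbol $m$ (harmless, bounded using Lemma \ref{ContSymb} and the $\mathcal S^\infty$ calculus), on $\widehat{u^{(b)}}(\xi-\eta)$ (producing $\FF^{-1}(|\xi|\nab_\xi\widehat{u^{(a)}})$-type factors, controlled by \eqref{nab_xi_v(b,c)_HN}), or on $\widehat{\Psi^{(c)}_\nu}(\eta)$ (producing $\FF^{-1}(|\xi|\d_\xi\widehat{\Psi^{(c)}})$, controlled by \eqref{Main_Prop_Ass2} when $|c|\leq N_1-1$, i.e. essentially always since $|a|\leq N_1-1$). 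After the integration by parts, I would split into frequency ranges (high-low $|\xi|\approx|\xi-\eta|\gg|\eta|$, high-high $|\xi-\eta|\approx|\eta|\gg|\xi|$, low-high $|\xi|\approx|\eta|>|\xi-\eta|$) and in each range estimate the lowest-frequency factor in $L^\infty$ (using the dispersive decay \eqref{Dec_Phi}, \eqref{Dec_v_VecFie}, \eqref{Dec_d2v_Bet}) and the remaining two factors in $L^2$ via Lemma \ref{Sym_Pre_lemma}, using \eqref{Main_Prop_Ass1}, Lemma \ref{d2v_Hn_lem}, and the weighted bound \eqref{rv} where the $\<r\>$-weight helps pair with the $t^{-1}$. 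The $\Psi_-$ contribution, for which $|\xi|+|\eta|$ vanishes only at the origin, is handled similarly (no genuine time resonance away from $0$), and the low-frequency corner is absorbed by the extra smallness coming from the $2^{k^-}$ factors.

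\textbf{Handling the quadratic term $u\cdot\nab\d_t\phi$.} The genuinely delicate contribution — flagged as the "third difficulty" in the introduction — is the piece of the nonlinearity of $\d_t^2\phi$ of the form $u^{(b)}\cdot\nab\d_t\phi^{(c)}$, which after passing to the profile involves $\widehat{u^{(b)}}(\xi-\eta)\,\eta\,\widehat{\Psi_\nu^{(c)}}(\eta)$ with an extra $\eta$; one cannot afford the decay loss from that extra frequency. Here I would exploit $\div u^{(b)}=0$ exactly as in \eqref{eta.u}, replacing $\eta\cdot\widehat{u^{(b)}}(\xi-\eta)$ by $\xi\cdot\widehat{u^{(b)}}(\xi-\eta)$, which moves the derivative off the high-frequency $\Psi$-factor (in the high-high regime) and onto the output, keeping the symbol in $\mathcal S^\infty_{k,k_1,k_2}$ with a favorable bound; the remaining argument is then the same integration-by-parts-in-$\eta$ scheme as above. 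The error terms $Z^a(Err11+Err12)$ and $Z^aErr2$ are higher order (at least cubic) and, because each of $\Phi^{(b)}$, $\nab\phi^{(b)}$, $u^{(b)}$ enjoys either $L^\infty$ decay from \eqref{Dec_Phi}--\eqref{Dec_Phi/nab} or $L^2$ control from \eqref{Main_Prop_Ass1}, they are absorbed with room to spare. The main obstacle, as usual in this method, is bookkeeping: ensuring that in every frequency regime and for every term the product of the decay rates beats $\<t\>^{-1+\ka(|a|+1)\delta}$ while the total number of derivatives spent never exceeds $N(|a|+1)$ on each factor — which is why the regularity hierarchy $N(a)=N_0-|a|h$ with $h=6$ and the weight parameters $\ka(\cdot)$ were chosen with the margins they have.
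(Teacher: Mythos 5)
Your overall strategy coincides with the paper's: reduce via (\ref{xidxif}) to bounding $\lV t\,\d_t\Psi^{(a)}\rV_{H^{N(|a|+1)}}$, and for the delicate term $u^{(b)}\cdot\nab\d_t\phi^{(c)}$ decompose dyadically, use $\div u^{(b)}=0$ to trade $\eta\cdot\widehat{u^{(b)}}(\xi-\eta)$ for $\xi\cdot\widehat{u^{(b)}}(\xi-\eta)$ in the high-high regime, and integrate by parts in $\eta$ (the space-resonant set being empty) to gain $t^{-1}$, paying with (\ref{Main_Prop_Ass2}), (\ref{nab_xi_v(b,c)_HN}) and the weighted bound (\ref{rv}). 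Two concrete points in your write-up would, however, fail as stated. First, to get the constant $\ep_0$ in (\ref{ImpBd_dxiPsi}) rather than $\ep_1=\ep_0^{2/3}$, you cannot invoke the bootstrap hypothesis (\ref{Main_Prop_Ass1}) for the contributions $\lV S\Psi^{(a)}\rV$, $\lV\Om\Psi^{(a)}\rV$, $\lV\Psi^{(a)}\rV$ coming from (\ref{xidxif}); you must use the already-improved energy bounds (\ref{Main_Prop_result1}) established in Propositions \ref{Prop_Ene_Sob}--\ref{Prop_Phia}, which is exactly why the weighted estimate is proved after the energy estimates. Only the $t\d_t\Psi^{(a)}$ piece, being at least quadratic in the solution, converts $\ep_1^2=\ep_0^{4/3}\lesssim\ep_0$ on its own; as written, your argument yields $\ep_1\<t\>^{\ka(|a|+1)\delta}$, which does not close the continuity argument.

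Second, you propose to run the integration-by-parts-in-$\eta$ scheme on every quadratic term, and the nonlinearities you list ($\d_j(\phi\nab u)$, $u\cdot\nab u$, $\d_j(\nab\phi\,\d_j\phi)$) are those of the $u$-equation, not of the $\phi$-equation, whose relevant terms are $\d_t u\cdot\nab\phi$, $u\cdot\nab\d_t\phi$, $u\cdot\nab(u\cdot\nab\phi)$ and $Err2$. For $\d_t u^{(b)}\cdot\nab\phi^{(c)}$ the $\eta$-derivative from your integration by parts would land on $\widehat{\d_t u^{(b)}}(\xi-\eta)$, i.e. produce a factor $x\,\d_t u^{(b)}$, for which no estimate is available (there is no analogue of (\ref{nab_xi_v(b,c)_HN}) or (\ref{rv}) for $\d_t u$). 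That term must instead be estimated directly, as in the paper's (\ref{dtPsi_Term1}), using the decay of $\d_t u^{(b)}$ from (\ref{Hntdtv(a)}), (\ref{Dec_dtv_VF-bett}) and (\ref{Dec_dtv_VF}) paired with the decay or $H^N$-boundedness of $\nab\phi^{(c)}$, which already beats $t^{-1}$; the same direct product estimates dispose of the cubic terms and the error terms. Only $u^{(b)}\cdot\nab\d_t\phi^{(c)}$ genuinely requires the stationary-phase argument, and there your plan matches the paper's.
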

\begin{proof}
	By (\ref{xidxif}), $[e^{it|\nab|},S]=[e^{it|\nab|},\Om]=0$ and (\ref{Main_Prop_result1}), we have
	\begin{equation}                \label{keyIneq}
	\lV\mathcal{F}^{-1}(|\xi|\nab_{\xi}\widehat{\Psi^{(a)}}(\xi))\rV_{H^N}\lesssim \ep_0\<t\>^{\ka(|a|+1)\delta}+\lV t\d_t \Psi^{(a)}\rV_{H^N},
	\end{equation}
	Then it suffices to estimate the last term in the right-hand side of (\ref{keyIneq}). Using $\phi$-equation in (\ref{Main_Sys_VecFie}) and (\ref{Psi-Phi-relat}), it suffices to prove
	\begin{gather}        \label{dtPsi_Term1}
	\lV \d_t u^{(b)}\cdot\nab\phi^{(c)}\rV_{H^{N(|a|+1)}}\lesssim \ep_1^2\<t\>^{-1},\\	\label{dtPsi_Term2}
	\lV u^{(b)}\cdot\nab\d_t\phi^{(c)}\rV_{H^{N(|a|+1)}}\lesssim \ep_1^2\<t\>^{-1+\ka(|a|+1)\delta},
	\end{gather}
	for $b+c=a$, $k\in\Z$, and
	\begin{gather} \label{dtPsi_Term3}
	\lV u^{(b)}\cdot\nab(u^{(c)}\cdot\nab\phi^{(e)})\rV_{H^{N(|a|+1)}}\lesssim \ep_1^3\<t\>^{-1},    \\     \label{dtPsi_Term4}
	\lV \frac{\Phi^{(b)}}{|\nab|}\Phi^{(c)}\Phi^{(e)}\rV_{H^{N(|a|+1)}}\lesssim \ep_1^3\<t\>^{-1},
	\end{gather}
	for $b+c+e=a$, $k\in\Z$.
		
	\emph{Step 1: Proof of (\ref{dtPsi_Term1}).}
	
	When $|a|= N_1-1$, $|b|\geq |c|$, by (\ref{Hntdtv(a)}) and (\ref{Dec_Phi}), we have
	\begin{align*}
	\lV \d_t u^{(b)}\cdot\nab\phi^{(c)}\rV_{H^{N(|a|+1)}}\lesssim  \lV \d_t u^{(b)}\rV_{H^{N(|a|+1)}}\sum_k 2^{N(|a|+1)k^+}\lV P_k \nab\phi^{(c)}\rV_{L^{\infty}}
	\lesssim  \ep_1^2 \<t\>^{-3/2+(6+|b|)\delta}.
	\end{align*}
	When $|a|= N_1-1$, $|b|<|c|$, using (\ref{Dec_dtv_VF}), it follows that
	\begin{equation*}
	\lV \d_t u^{(b)}\cdot\nab\phi^{(c)}\rV_{H^{N(|a|+1)}}\lesssim  (\sum_{k\geq 0}2^{N(|a|+1)k}\lV \d_t u^{(b)}\rV_{L^{\infty}}+\lV \d_t u^{(b)}\rV_{L^{\infty}}) \lV\nab \phi^{(c)}\rV_{H^{N(|a|+1)}}\lesssim \ep_1^2 \<t\>^{-5/4+(6+|b|)\delta},
	\end{equation*}
	When $|a|\leq N_1-2$, we obtain from (\ref{Hntdtv(a)}), (\ref{Dec_Phi}) and (\ref{Dec_dtv_VF})
	\begin{align*}
	\lV \d_t u^{(b)}\cdot\nab\phi^{(c)}\rV_{H^{N(|a|+1)}}\lesssim \lV\d_t u^{(b)}\rV_{H^{N(|a|+1)}}\lV \nab \phi^{(c)}\rV_{L^{\infty}}+\lV\d_t u^{(b)}\rV_{L^{\infty}}\lV \nab \phi^{(c)}\rV_{H^{N(|a|+1)}}\lesssim \ep_1^2 \<t\>^{-5/4+(6+|b|)\delta},
	\end{align*}
	Hence, the bound (\ref{dtPsi_Term1}) follows.

	\emph{Step 2: Proof of (\ref{dtPsi_Term2}).} If $|a|=|b|=N_1-1,c=0$. (\ref{Dec_Phi}) implies
	\begin{equation}             \label{dtPsi_Term2_case1}
	\lV u^{(b)}\cdot \nab\d_t\phi\rV_{H^{N(|a|+1)}}\lesssim  \lV u^{(|b|)}\rV_{H^{N(|a|+1)}} \sum_{k}2^{N(|a|+1)k^++k}\lV \d_t\phi\rV_{L^{\infty}}\lesssim \ep_1^2\<t\>^{-1+\ka(|a|+1)\delta}.
	\end{equation}
    Now it suffices to consider the case $|b|\leq N_1-2, |c|\geq 0$.

    Decomposing dyadically in frequency, we have
	\begin{align*}
	\lV u^{(b)}\cdot\nab\d_t\phi^{(c)}\rV_{H^{N(|a|+1)}}\lesssim (\sum_k 2^{2N(|a|+1)k^+}I_k^2)^{1/2},
	\end{align*}
	where
	\begin{equation*}
	I_k:=\lV\sum_{k_1,k_2}\varphi_k(\xi)\int_{\R^3} e^{it|\eta|}\eta \widehat{P_{k_1}u^{(b)}}(\xi-\eta)\widehat{P_{k_2}\Psi^{(c)}}(\eta)d\eta\rV_{L^2}.
	\end{equation*}
	We further divided $I_k$ into high-low, low-high, high-high case,
	\begin{equation*}
	I_k\lesssim I^{hl}_k+I^{lh}_k+I^{hh}_k,
	\end{equation*}
	where
	\begin{gather*}
	I^{hl}_k:=\sum_{k_2\leq k-5}\lV\varphi_k(\xi)\int_{\R^3} e^{it|\eta|}\eta \widehat{P_{[k-3,k+3]}u^{(b)}}(\xi-\eta)\widehat{P_{k_2}\Psi^{(c)}}(\eta)d\eta\rV_{L^2},\\
	I^{lh}_k:=\lV\varphi_k(\xi)\int_{\R^3} e^{it|\eta|}\eta \widehat{P_{<k+6}u^{(b)}}(\xi-\eta)\widehat{P_{[k-5,k+5]}\Psi^{(c)}}(\eta)d\eta\rV_{L^2},\\
	I^{hh}_k:=\sum_{k_1,k_2;k_2\geq k+5}\lV\varphi_k(\xi)\int_{\R^3} e^{it|\eta|}\xi \widehat{P_{[k_2-3,k_2+3]}u^{(b)}}(\xi-\eta)\widehat{P_{k_2}\Psi^{(c)}}(\eta)d\eta\rV_{L^2}.
	\end{gather*}
	
	\emph{Step 2.1. The contribution of $I^{hl}_k$.} Integration by parts in $\eta$ yields
	\begin{align*}
	I^{hl}_k\lesssim &  t^{-1}\sum_{k_2<k-5}\Big[\lV\int_{\R^3} \d_{\eta_j}\big(\frac{\eta_j}{|\eta|}\eta\big)\widehat{P_{[k-3,k+3]} u^{(b)}}(\xi-\eta) \widehat{P_{k_2}\Phi^{(c)}}(\eta) d\eta\rV_{L^2}\\
	&+\lV \int_{\R^3} \frac{\eta_j}{|\eta|}\eta \d_{\eta_j} \widehat{P_{[k-3,k+3]} u^{(b)}}(\xi-\eta) \widehat{P_{k_2}\Phi^{(c)}}(\eta) d\eta\rV_{L^2}\\
	&+\lV \int_{\R^3} \frac{\eta_j}{|\eta|}\eta  \widehat{P_{[k-3,k+3]} u^{(b)}}(\xi-\eta) e^{it|\eta|}\d_{\eta_j}\widehat{P_{k_2}\Psi^{(c)}}(\eta) d\eta\rV_{L^2}\Big].
	\end{align*}
	When $|a|=|b|\leq N_1-2, c=0$, by Lemma $\ref{Sym_Pre_lemma}$, (\ref{Main_Prop_Ass1}), (\ref{Main_Prop_Ass2}) and (\ref{nab_xi_v(b,c)_HN}) we have
	\begin{align*}
	\big[\sum_k 2^{2N(|a|+1)k^+}(I^{hl}_k)^2\big]^{1/2}
	\lesssim & t^{-1}  [\lV u^{(b)}\rV_{H^{N(|a|+1)}}\lV \Phi^{(c)}\rV_{H^2}+\lV |\xi|\d_{\xi}\widehat{u^{(b)}}(\xi)\rV_{H^{N(|a|+1)}}\sum_{k_2}\lV \Phi^{(c)}\rV_{L^{\infty}}\\
	&+\lV v^{(b)}\rV_{H^{N(|a|+1)}}\lV|\xi|\d_{\xi}\widehat{\Psi^{(c)}}\rV_{H^{N(|a|+1)}}]\\
	\lesssim & t^{-1}(\ep_1^2\<t\>^{\ka(c)\delta}+\ep_1^2\<t\>^{-1/3}+\ep_1^2\<t\>^{\ka(|c|+1)\delta})\\
	\lesssim & \ep_1^2\<t\>^{-1+\ka(|c|+1)\delta},
	\end{align*}
	When $|b|\leq N_1-2,|c|\geq 1$,	it follows from (\ref{Main_Prop_Ass1}), (\ref{Main_Prop_Ass2}) and (\ref{rv}) that
	\begin{align*}       \label{dtPsi_Term2_case2}
	\big[\sum_k 2^{2N(|a|+1)k^+}(I^{hl}_k)^2\big]^{1/2}
	\lesssim &t^{-1}[\lV u^{(b)}\rV_{H^{N(|a|+1)}}\lV \Phi^{(c)}\rV_{H^2}+\{2^{N(|a|+1)k^+}\lV rP_{[k-3,k+3]}u^{(b)}\rV_{L^{\infty}}\}_{l^2}\lV \Phi^{(c)}\rV_{H^2}\\
	&+\lV u^{(b)}\rV_{H^{N(|a|+1)}}\lV|\xi|\d_{\xi}\Psi^{(c)}\rV_{H^{N(|a|+1)}}]\\
	\lesssim & t^{-1}(\ep_1^2\<t\>^{\ka(c)\delta}+\sum_{\alpha_1\leq 1,\alpha_2\leq 2}\lV \d_r\tilde{\Om}^{\alpha_1}v^{(b)}\rV_{H^{N(|a|+1)}}^{1/2}\lV \tilde{\Om}^{\alpha_2}v^{(b)}\rV_{H^{N(|a|+1)}}^{1/2}\ep_1\<t\>^{\ka(c)\delta}\\
	&+\ep_1^2\<t\>^{\ka(|c|+1)\delta})\\
	\lesssim &\ep_1^2 \<t\>^{-1+\ka(|a|+1)\delta},
	\end{align*}
	Therefore, we have
	\begin{equation}\label{hl}
	\big[\sum_k 2^{2N(|a|+1)k^+}(I^{hl}_k)^2\big]^{1/2}\lesssim \ep_1^2\<t\>^{-1+\ka(|a|+1)\delta}.
	\end{equation}
	
	\emph{Step 2.2: The contribution of $I^{lh}_k$ and $I^{hh}_k$.} We only give the bound $I^{lh}_k$, $I^{hh}_k$ is estimated similarly. Integration by parts in $\eta$, it follows that
	\begin{align*}
	I^{lh}_k\lesssim &  t^{-1}\Big[\lV\int_{\R^3} \d_{\eta_j}\big(\frac{\eta_j}{|\eta|}\eta\big)\widehat{P_{<k+6} u^{(b)}}(\xi-\eta) \widehat{P_{[k-5,k+5]}\Phi^{(c)}}(\eta) d\eta\rV_{L^2}\\
	&+\lV \int_{\R^3} \frac{\eta_j}{|\eta|}\eta \d_{\eta_j} \widehat{P_{<k+6} u^{(b)}}(\xi-\eta) \widehat{P_{[k-5,k+5]}\Phi^{(c)}}(\eta) d\eta\rV_{L^2}\\
	&+\lV \int_{\R^3} \frac{\eta_j}{|\eta|}\eta  \widehat{P_{<k+6} u^{(b)}}(\xi-\eta) e^{it|\eta|}\d_{\eta_j}\widehat{P_{[k-5,k+5]}\Psi^{(c)}}(\eta) d\eta\rV_{L^2}\Big]
	\end{align*}
	Then by Lemma $\ref{Sym_Pre_lemma}$, (\ref{Main_Prop_Ass1}), (\ref{Main_Prop_Ass2}) and (\ref{rv}), we have
	\begin{equation}\label{lhhh}
	\begin{aligned}
	\big[\sum_k 2^{2N(|a|+1)k^+}(I^{lh}_k)^2\big]^{1/2}
	\lesssim & t^{-1}  [\lV u^{(b)}\rV_{H^{N(|a|+1)}}\lV \Phi^{(c)}\rV_{H^2}+\lV ru^{(b)}\rV_{L^{\infty}}\lV \Phi^{(c)}\rV_{H^{N(|a|+1)+1}}\\
	&+\lV u^{(b)}\rV_{H^{N(|a|+1)}}\lV|\xi|\d_{\xi}\widehat{\Psi^{(c)}}\rV_{H^{N(|a|+1)}}]\\
	\lesssim & t^{-1}(\ep_1^2 \<t\>^{\ka(c)\delta}+\ep_1^2\<t\>^{\ka(|c|+1)\delta})\\
	\lesssim & \ep_1^2 \<t\>^{-1+\ka(|a|+1)\delta}.
	\end{aligned}
	\end{equation}
	
	From (\ref{hl}) and (\ref{lhhh}), the bound (\ref{dtPsi_Term2}) for $|b|\leq N_1-2, |c|\geq 0$ is obtained immediately. This completes the proof of (\ref{dtPsi_Term2}).
	
	\emph{Step 3: Proof of (\ref{dtPsi_Term3}).}
	
	When $|e|\leq N_1-2$, using (\ref{Dec_v_VecFie}) and (\ref{Dec_Phi}), we get
	\begin{align*}
	\lV u^{(b)}\cdot\nab(u^{(c)}\cdot\nab\phi^{(e)})\rV_{H^{N(|a|+1)}}
	\lesssim &\lV u^{(b)}\rV_{H^{N(|a|+1)}}(\lV\nab u^{(c)}\rV_{L^{\infty}}\lV \nab\phi^{(e)}\rV_{L^{\infty}}+\lV u^{(c)}\rV_{L^{\infty}}\lV \nab^2\phi^{(e)}\rV_{L^{\infty}})\\
	&+\lV u^{(b)}\rV_{L^{\infty}}(\lV u^{(c)}\rV_{H^{N(|a|+1)+1}}\lV \nab\phi^{(e)}\rV_{L^{\infty}}+\lV u^{(c)}\rV_{L^{\infty}}\lV \nab\phi^{(e)}\rV_{H^{N(|a|+1)+1}})\\
	\lesssim & \ep_1^3\<t\>^{-5/4},
	\end{align*}
	When $|e|\geq N_1-1$, by (\ref{Dec_v_VecFie}), we have
	\begin{align*}
	\lV u\cdot\nab(u\cdot\nab\phi^{(e)})\rV_{H^{N(|a|+1)}}
	\lesssim & (\sum_{k\geq 0}2^{N(|e|+1)k+k}\lV P_k u\rV_{L^{\infty}}+\lV P_{<0}u\rV_{L^{\infty}})^2 \lV \nab\phi^{(e)}\rV_{H^{N(|e|+1)+1}}\lesssim \ep_1^3\<t\>^{-5/4},
	\end{align*}
	 Hence, the bound (\ref{dtPsi_Term3}) follows.
	
	Finally, the bound (\ref{dtPsi_Term4}) is an consequence of (\ref{Dec_Phi}) and (\ref{Dec_Phi/nab}).
	This completes the proof of the proposition.
\end{proof}

\appendix

\section{Deriving the system (\ref{Main_Sys}), (\ref{Main_Sys-v}) and (\ref{Main_Sys_VecFie})}
\subsection{Deriving the systems (\ref{Main_Sys}) and (\ref{Main_Sys-v}) from (\ref{ori_sys})}       \label{A.1}
\

\emph{Step 1: we derive the following system from (\ref{ori_sys})
\begin{equation}                   \label{Sys}
\left\{\begin{aligned}
\partial_t u+\tilde{L} u   & =-u\cdot\nabla u-\nab p-\d_j(\nab \phi\cdot \d_j \phi)+\nab\otimes(\phi\otimes\nab u)+Err11+Err12), \\
\div u & =0,\\
\d_t^2 \phi-\Delta\phi&=-\d_tu\cdot\nab \phi-2u\cdot\nab\d_t\phi-u\cdot\nab(u\cdot\nab\phi)+Err2
\end{aligned}
\right.
\end{equation}
where the two order linear operator $\tilde{H}$ is defined by
\begin{equation*}
\tilde{L}u:=-\frac{\nu_4}{2}\Delta u-(\frac{\nu_5}{2}\Delta u_1+(\nu_1+\nu_5)\d_1^2 u_1,\frac{\nu_5}{2}(\d_1^2 u_2+\d_1\d_2 u_1),\frac{\nu_5}{2}(\d_1^2 u_3+\d_1\d_3 u_1))^{\top}.
\end{equation*}}

Firstly, we proceed to derive the equations satisfied by $\phi$. From the third component of $d$-equation in (\ref{ori_sys}) and (\ref{d_pres}), by directly computation we have
\begin{equation}  \label{phi2}
\ddot{\phi}_2-\Delta \phi_2= \frac{1}{2}\sin 2\phi_2(-\dot{\phi}_1^2+|\nab\phi_1|^2).
\end{equation}
After taking the first component of $d$-equation of the system (\ref{ori_sys}), by (\ref{phi2}) we obtain
\begin{equation}            \label{phi1}
\ddot{\phi}_1-\Delta \phi_1=2\tan\phi_2(\dot{\phi}_1\dot{\phi}_2-\nab\phi_1\nab\phi_2).
\end{equation}
Hence, the $\phi$-equation in (\ref{Sys}) follows.

Now we derive the $u$-equation in (\ref{Sys}). By (\ref{d_pres}), the $i$-th, $1\leq i\leq 3$ component of $\div (\nab d\odot\nab d)$ in the first equation of the system (\ref{ori_sys}) can be rewritten as
\begin{equation*}
\sum_j\d_j (\d_i d\cdot\d_j d)=\sum_j \d_j(\d_i\phi\d_j\phi)-\sum_j \d_j(\sin^2\phi_2\d_i\phi_1\d_j\phi_1).
\end{equation*}

Since the orientation field $d$ is near $\vec{i}=(1,0,0)$. By (\ref{d_pres}) and Taylor series expansion we have
\begin{align*}
(d_kd_pd_i d_j)(\phi_1,\phi_2)=&(d_kd_pd_i d_j)(0,0)+(\phi_1\d_1+\phi_2\d_2)(d_kd_pd_i d_j)(0,0)\\
&+\int_0^1 (\phi_1\d_1+\phi_2\d_2)^2(d_kd_pd_i d_j)(s\phi_1,s\phi_2)(1-s)ds,
\end{align*}
and
\begin{align*}
(d_id_k)(\phi_1,\phi_2)=&(d_id_k)(0,0)+(\phi_1\d_1+\phi_2\d_2)(d_id_k)(0,0)\\
&+\int_0^1 (\phi_1\d_1+\phi_2\d_2)^2(d_id_k)(s\phi_1,s\phi_2)(1-s)ds.
\end{align*}
Then for $\div \sigma$, from the above two expression we obtain the linear term 
\begin{equation*}
\nu_1\left(\begin{array}{ccc}
\d_1 A_{11}\\
0\\
0
\end{array}\right)+\nu_5\left(\begin{array}{ccc}
\d_1A_{11}+\d_jA_{ij}\\
\d_1A_{12}\\
\d_1A_{13}
\end{array}\right)=
\left(
\begin{array}{ccc}
\frac{\nu_5}{2}\Delta u_1+(\nu_1+\nu_5)\d_1^2 u_1\\
\frac{\nu_5}{2}(\d_1^2 u_2+\d_1\d_2 u_1)\\
\frac{\nu_5}{2}(\d_1^2 u_3+\d_1\d_3 u_1)
\end{array}
\right),
\end{equation*}
the quadratic terms
\begin{align*}
\nab\otimes(\phi\otimes\nab u):=&\nu_1\left(\begin{array}{ccc}
\d_2(A_{11}\phi_1)+\d_3(A_{11}\phi_2)+2\d_1(\phi_1A_{12}+\phi_2A_{13})\\
\d_1(A_{11}\phi_1)\\
\d_1(A_{11}\phi_2)
\end{array}\right)\\
&+\nu_5\left(\begin{array}{ccc}
\d_2(\phi_1A_{11})+\d_3(\phi_2A_{11})+\d_1(\phi_1A_{21}+\phi_2A_{31})+\d_j(\phi_1A_{2j}+\phi_2A_{3j})\\
\d_2(\phi_1A_{12})+\d_3(\phi_2A_{12})+\d_1(\phi_1A_{22}+\phi_2A_{32})+\d_j(\phi_1A_{1j})\\
\d_2(\phi_1A_{13})+\d_3(\phi_2A_{13})+\d_1(\phi_1A_{23}+\phi_2A_{33})+\d_j(\phi_2A_{1j})
\end{array}\right).
\end{align*}
and the error terms 
\begin{equation*}
Err11=(Err11_1,Err11_2,Err11_3)^{\top},
\end{equation*}
where
\begin{align*}
Err11_i=&\nu_1\d_j (A_{kp}\int_0^1 (\phi_1\d_1+\phi_2\d_2)^2(d_kd_pd_i d_j)(s\phi_1,s\phi_2)(1-s)ds)\\
&+\nu_5\d_j(A_{ki}\int_0^1 (\phi_1\d_1+\phi_2\d_2)^2(d_jd_k)(s\phi_1,s\phi_2)(1-s)ds)\\
&+\nu_5\d_j(A_{kj}\int_0^1 (\phi_1\d_1+\phi_2\d_2)^2(d_id_k)(s\phi_1,s\phi_2)(1-s)ds).
\end{align*}
Thus the $u$-equation in (\ref{Sys}) follows, and hence we obtain the system (\ref{Sys}).
	
\emph{Step 2: We derive the $v$-equation in (\ref{Main_Sys-v}) from $u$-equation in (\ref{Sys}).}

Applying the Leray projection $\P$ to $u$-equation in (\ref{Sys}), we obtain
\begin{equation}   \label{u_eq}
\d_t u+\P\tilde{L} u   =\P(-u\cdot\nabla u-\d_j(\nab \phi\cdot \d_j \phi)+\mu\d_j(\phi\otimes\nab u)+Err11+Err12),
\end{equation}
where $\P\tilde{L}$ is a two order operator, i.e
\begin{align*}
\bar{L} u:=\P\tilde{L} u=&-\frac{\nu_4}{2}\Delta u-\nu_1\frac{\d_1^2}{|\nab|^2}\big(-(\d_2^2+\d_3^2)u_1,\d_1\d_2 u_1,\d_1\d_3u_1\big)^{\top}\\
&-\nu_5\big(\frac{1}{2}\Delta u_1,\frac{1}{2}(\d_1^2 u_2-\d_1\d_2 u_1),\frac{1}{2}(\d_1^2 u_3-\d_1\d_3 u_1)\big)^{\top}.
\end{align*}
By $\div u=0$, we further have
\begin{equation*}
\bar{L}u=
\left(\begin{array}{ccc}
-\frac{\nu_4+\nu_5}{2}\Delta+\nu_1\frac{\d_1^2(\d_2^2+\d_3^2)}{|\nab|^2} & 0&0\\
0&-\frac{\nu_4+\nu_5}{2}\Delta+\frac{\nu_5}{2}\d_3^2+\nu_1\frac{\d_1^2\d_2^2}{|\nab|^2}& -(\frac{\nu_5}{2}-\nu_1\frac{\d_1^2}{|\nab|^2})\d_2\d_3\\
0& -(\frac{\nu_5}{2}-\nu_1\frac{\d_1^2}{|\nab|^2})\d_2\d_3&-\frac{\nu_4+\nu_5}{2}\Delta+\frac{\nu_5}{2}\d_2^2+\nu_1\frac{\d_1^2\d_3^2}{|\nab|^2}
\end{array}\right)
\left(\begin{aligned}
u_1\\
u_2\\
u_3
\end{aligned}\right).
\end{equation*}
In order to diagonalize $u$-equation (\ref{u_eq}), let
\begin{equation}
v=\U u,
\end{equation}
where the operator $\U$ is
\begin{equation}          \label{U-apend}
\U:=\left(\begin{array}{ccc}
1&0&0\\
0&\frac{-i\d_2}{\sqrt{-\d_2^2-\d_3^2}}&\frac{-i\d_3}{\sqrt{-\d_2^2-\d_3^2}}\\
0&\frac{-i\d_3}{\sqrt{-\d_2^2-\d_3^2}}&\frac{i\d_2}{\sqrt{-\d_2^2-\d_3^2}}
\end{array}\right).
\end{equation}
From the definition of $\U$, we can recover $u$ by
$$u=\U v.$$ 
Then applying $\U$ to (\ref{u_eq}), we obtain
\begin{equation*}
\d_t v+\U\bar{L} \U v    =\U\P(-u\cdot\nabla u-\d_j(\nab \phi\cdot \d_j \phi)+\nab\otimes(\phi\otimes\nab u)+Err11+Err12).
\end{equation*}
Using Fourier transformation, the operator $\U\bar{L} \U$ can be rewritten as
\begin{align*}
&\mathcal{F}(\U\bar{L} \U v)(\xi)=U(\xi)\bar{L}(\xi)U(\xi)\widehat{v}(\xi)=\widehat{L v}(\xi)\\
=& \Big((\frac{\nu_4+\nu_5}{2}|\xi|^2+\nu_1\frac{\xi_1^2(\xi_2^2+\xi_3^2)}{|\xi|^2})\widehat{v}_1(\xi),(\frac{\nu_4+\nu_5}{2}|\xi|^2+\nu_1\frac{\xi_1^2(\xi_2^2+\xi_3^2)}{|\xi|^2})\widehat{v}_2(\xi),(\frac{\nu_4}{2}|\xi|^2+\frac{\nu_5}{2}\xi_1^2)\widehat{v}_3(\xi)\Big).
\end{align*}
Thus the $v$-equation in (\ref{Main_Sys-v}) follows.

\subsection{Deriving the system (\ref{Main_Sys_VecFie})}
\
By the standard argument, from (\ref{Sys}) we can derive that
\begin{equation}         \label{Sys_VF}
\left\{\begin{aligned}
\d_t u^{(a)}+\tilde{L} u^{(a)}+L_1(u)=&-\sum_{b+c=a}C_a^b u^{(b)}\cdot\nab u^{(c)}-\nab p^{(a)}\\
&-\sum_{b+c=a}C_a^b \d_j(\nab(S-1)^{b_1}\Gamma^{b'}\phi\cdot\d_j(S-1)^{c_1}\Gamma^{c'}\phi)\\
&+(S+1)^{a_1}\Gamma^{a'}\nab\otimes(\phi\otimes\nab u)+(S+1)^{a_1}\Gamma^{a'}( Err11+Err12)),\\
\div u^{(a)}=&0,\\
\d_t^2\phi^{(a)}-\Delta\phi^{(a)}=&-\sum_{b+c=a}C_a^b(\d_t u^{(b)}\cdot\nab\phi^{(c)}+2u^{(b)}\cdot\nab\d_t\phi^{(c)})\\
&-\sum_{b+c+e=a}C_a^{b,c}u^{(b)}\cdot\nab(u^{(c)}\cdot\nab\phi^{(e)})+(S+2)^{(a_1)}\Gamma^{(a')}Err2,
\end{aligned}
\right.
\end{equation}
then by $\div u^{(a)}=0$, $v^{(a)}=\U u^{(a)}$ and the same argument as Step 2 in Appendix \ref{A.1}, applying $\U\P$ to $u^{(a)}$-equation in (\ref{Sys_VF}), the system (\ref{Main_Sys_VecFie}) is obtained.

\end{document}